\documentclass[11pt]{amsart}
\usepackage{fullpage}
\usepackage{microtype}
\usepackage{graphicx}

\usepackage[colorlinks]{hyperref}
\usepackage[dvipsnames]{xcolor}
\usepackage{stmaryrd}

\addtolength{\textheight}{-\baselineskip}
\addtolength{\footskip}{\baselineskip}

\usepackage{framed}

\hypersetup{
    linkcolor=red,
    citecolor=OliveGreen,
    filecolor=black,
    urlcolor=black,
}
\usepackage{caption}
\usepackage{upgreek}

\usepackage{booktabs}

\newcommand{\Adv}{\mathsf{A}}

\renewcommand{\sf}{f}

\newcommand{\cP}{\mathscr{P}}

\newcommand{\cF}{\mathscr{L}}
\newcommand{\cE}{\mathscr{E}}
\newcommand{\cH}{\mathscr{H}}

\newcommand{\cI}{\mathcal{I}}
\newcommand{\cJ}{\mathcal{J}}

\newcommand{\Supp}{\mathsf{Supp}}

\newcommand{\se}{\mathsf{e}}
\newcommand{\sh}{\mathsf{h}}
\newcommand{\sg}{\mathsf{g}}
\newcommand{\sm}{\mathsf{m}}

\newcommand{\sch}{\mathsf{s}}

\newcommand{\sLambda}{\mathsf{\Lambda}}

\newcommand{\Par}{\mathsf{Par}}

\newcommand{\contains}{\supseteq}

\newcommand{\sM}{\mathsf{M}}

\renewcommand{\Re}{\operatorname{\mathsf{Re}}}
\renewcommand{\Im}{\operatorname{\mathsf{Im}}}

\newcommand{\beq}[1]{\begin{equation}\label{eq:#1}}
\newcommand{\eeq}{\end{equation}}

\newcommand{\sign}{\text{sign}}

\usepackage{verbatim}
\usepackage{amsmath}
\usepackage{amsthm}
\usepackage{amssymb}
\usepackage{bm}
\usepackage{bbm}
\usepackage{dsfont}
\usepackage{comment}
\usepackage{multirow}
\usepackage{color}
\usepackage{tikz}
\usepackage{mathtools}
\usepackage{pifont}
\usepackage{enumerate,algorithm,algorithmic,caption,graphicx,color}

\usepackage{chngcntr}

\newcommand{\be}{\begin{eqnarray}}
\newcommand{\ee}[1]{\label{#1}\end{eqnarray}}

\newcommand{\ese}{\end{eqnarray*}}
\newcommand{\bse}{\begin{eqnarray*}}

\def\qed{\hfill$\square$}

\def\Conv{\mathop{\hbox{\rm Conv}}}

\def\N{{\mathbb{N}}}

\def\sF{{\mathsf{F}}}
\def\sQ{{\mathsf{Q}}}
\def\C{\mathbb{C}}
\def\R{\mathbb{R}}
\def\Z{\mathbb{Z}}
\def\Q{\mathbb{Q}}

\newcommand{\wt}[1]{{\widetilde{#1}}}
\def\Argmax{\mathop{\hbox{\rm Argmax}}}

\def\T{{\mathbb{T}}}
\def\D{{\mathbb{D}}}

\def\vphi{\varphi}

\newcommand{\bxi}{\boldsymbol{\xi}}
\newcommand{\bpsi}{\boldsymbol{\psi}}

\newtheorem{theorem}{Theorem}
\newtheorem{proposition}{Proposition}[section]
\newtheorem{lemma}{Lemma}[section]
\newtheorem{fact}{Fact}[section]
\newtheorem{definition}{Definition}[section]

\newtheorem{example}{Example}[section]
\newtheorem{corollary}{Corollary}[section]
\newtheorem{conjecture}{Conjecture}[section]
\newtheorem{remark}{Remark}[section]

\newcommand{\veps}{\varepsilon}

\newcommand{\ones}{1}

\renewcommand{\le}{\leqslant}
\renewcommand{\ge}{\geqslant}

\newcommand{\negphantom}[1]{\ifmmode\settowidth{\dimen0}{$#1$}\else\settowidth{\dimen0}{#1}\fi\hspace*{-\dimen0}}

\allowdisplaybreaks


\usepackage{xfrac}
\usepackage{mathrsfs}
\usepackage{eucal}

\renewcommand{\top}{\mathsf{T}}
\newcommand{\htop}{\mathsf{H}}

\newcommand{\XX}{X}

\renewcommand{\SS}{\boldsymbol{U}}
\newcommand{\RR}{\boldsymbol{Z}}

\newcommand{\bald}{\begin{aligned}}
\newcommand{\eald}{\end{aligned}}




\usepackage{trimclip}

\makeatletter
\DeclareRobustCommand{\lasymp}{\lg@asymp{<}}
\DeclareRobustCommand{\gasymp}{\lg@asymp{>}}

\newcommand{\under@asymp}[1]{\clipbox{0pt 0pt 0pt {0.5\height}}{$\m@th#1\asymp$}}
\newcommand{\lg@asymp}[1]{\mathrel{\mathpalette\lg@asymp@{#1}}}
\newcommand{\lg@asymp@}[2]{%
  \vcenter{%
    \offinterlineskip
    \m@th
    \ialign{%
      \hfil##\hfil\cr
      $#1#2$\cr
      \under@asymp{#1}\cr
    }%
  }%
}
\makeatother

\begin{document}

\title{Amplitude maximization in stable systems, Schur positivity, and some conjectures on polynomial interpolation}
\author{Dmitrii M.~Ostrovskii\;}
\address{Georgia Institute of Technology, School of Mathematics \& H. Milton Stewart School of Industrial and Systems Engineering (ISyE), Atlanta, GA 30332, USA}
\email{ostrov@gatech.edu}
\author{\;Pavel S.~Shcherbakov}
\address{Institute of Control Sciences, Russian Academy of Sciences, Moscow, Russia, 117997}
\email{sherba@ipu.ru}

\begin{abstract}
For $r > 0$ and integers $t \ge n > 0$, we consider the following problem: maximize the amplitude $|x_t|$ at time $t$, over all complex solutions $x = (x_0, x_1, \dots)$ of arbitrary homogeneous linear difference equations of order $n$ with the characteristic roots in the disc $\{z \in \mathbb{C}: |z| \le r\}$, and with initial values $x_0, \dots, x_{n-1}$ in the unit disc. 
We find that for any triple $t,n,r$, the maximum is attained with {\em coinciding roots} on the boundary circle; in particular, this implies that the peak amplitude $\sup_{t \ge n} |x_t|$ can be maximized explicitly by studying {\em a unique} equation whose characteristic polynomial is $(z-r)^n$.
Moreover, the optimality of the cophase root configuration holds for origin-centered polydiscs.
To prove this result, we first reduce the problem to a certain interpolation problem over monomials, then solve the latter by leveraging the theory of symmetric functions and identifying the associated Schur positivity structure.
We also discuss the implications for more general Reinhardt domains.
Finally, we study the problem of estimating the derivatives of a real entire function from its values at $n/2$ pairs of complex conjugate points in the unit disc. 
We propose conjectures on the extremality of the monomial $z^n$, and restate them in terms of Schur polynomials.
\vspace{-0.92cm}
\end{abstract}

\maketitle


\section{Introduction}
\label{sec:problem}
Denoting with~$\N_0$ the set of nonnegative integers, let~$\C^{\N_0}$ be the vector space (over~$\C$) of complex sequences~$x = (x_0, x_1, \dots)$, and define the left shift operator~$\Adv: \C^{\N_0} \to \C^{\N_0}$ such that~$(\Adv x)_{t} = x_{t+1}$ for all~$t \in \N_0$.
Any ~$(f_0, f_1, \dots, f_n) \in \C^{n+1}$ with~$f_n = 1$ specifies the homogeneous difference equation
\begin{equation}
\label{def:recurrence}
\sf(\mathsf{\Adv}) x = 0
\end{equation}
where~$f(z) := \sum_{k = 0}^n f_{k} z^k$, a monic polynomial of degree~$n$, is {\em the characteristic polynomial of~\eqref{def:recurrence}.} 
Solution set of~\eqref{def:recurrence} is a {shift-invariant} subspace of~$\C^{\N_0}$, i.e.~an eigenspace of~$\Adv$ as a linear operator, and to specify its element---a particular solution---one has to assign the initial values~$(x_0, \dots, x_{n-1})$.
Naturally, one might want to study the properties of the whole solution set, or of its sufficiently large subset, rather than those of a specific solution.
This task might be challenging if one considers a whole class of equations instead of a single one, due to the highly nonlinear fashion in which a solution to~\eqref{def:recurrence} depends on the roots~$z_1, \dots, z_n$ of characteristic polynomial, in contrast to its linear dependence on the initial data.
As such, it is not surprising that the classical theory of linear systems does not say much about the {\em worst-case} behavior of solutions to~\eqref{def:recurrence} in the setup where~$z_1, \dots, z_n$ {vary} over a subset of~$\C^n$.
In this work, we give a rather striking example of such a characterization, 
whose underlying nature is algebraic, and whose existence seems to be unnoticed.
Before we proceed to the presentation of our results, let us briefly review what the classical theory has to offer.
Recall that~\eqref{def:recurrence} is called {\em stable} if its arbitrary solution satisfies~$\sup_{t \in \N_0} |x_{t}| < \infty$, and is called~{\em asymptotically stable} if~$|x_t| \to 0$ as~$t \to +\infty$. 
The textbook criterion of asymptotic stability is for~$\sf$ to be {\em Schur stable}, i.e.~have all roots in the open unit disc; in the case of usual stability, one is also allowed to have roots on the boundary circle, provided that these are {distinct}.
Next, one can provide {\em quantitative} restrictions on the asymptotic behavior of solutions.
For example,~{\em $r$-stability},
\begin{equation*}
\max \{ |z_1|, \dots, |z_n| \} \le r < 1,
\end{equation*}
implies that asymptotically as~$t \to +\infty$, solutions to~\eqref{def:recurrence} shrink exponentially fast.
More precisely,
\[
\limsup_{t \to +\infty} \; \log |x_t| - t\log{r} - (n-1)\log{t} < +\infty
\] 
where the linear in~$t$ term is defined solely by the magnitude of the largest root, whereas the logarithmic term accounts for possible locations of the roots and is realized when they are identical.

We focus on a very natural question one could ask in the context of stability, phrased as follows:
\begin{quote}
\label{def:question}
{\em Given any~$r > 0$ and~$t \ge n > 0$, what is the largest possible amplitude~$|x_t|$ for a solution~$x$ to an~$r$-stable equation~\eqref{def:recurrence} of order~$n$, provided that~$|x_t| \le 1$ for~$0 \le t < n$?}
\end{quote}
\vspace{0.1cm}
In this ``vanilla'' version of the {amplitude maximization} problem, both the characteristic roots and the initial values are bounded {\em uniformly}, and one might ask how reasonable are these two assumptions, especially the latter one. 
In addition to its adoption in prior work~\cite{izmailov1987peak,sussmann2002peaking,polyak2002superstable,polyak2016large,polyak2018peak,smirnov2024asymptotic}, the assumption of uniformly bounded initial values can be justified as the one allowing for most variation at the onset of a trajectory.
Yet, our main result holds in a more general setup, allowing for heterogeneous bounds. 
To state it, we need some notation.
Let~$\cP_n$ be the space of all monic polynomials of degree~$n$, and let~$P_n(\RR)$ be the set of monic polynomials with the tuple of roots from~$\RR \subseteq \C^n$.
Let~$\XX(f)$ be the solution set of~\eqref{def:recurrence} with a given~$f$.
For~$t \in \N_0$,~$f \in \cP_n$, and~$\SS \subseteq \C^n$,
\begin{align}
\label{def:instant-amplitude}
M_{t}(f|\SS) &:= \sup \, \{\, |x_t|: \; x \in \XX(f),\;\; (x_{0},\dots, x_{n-1}) \in \SS \,\}
\end{align}
is the {\em maximum amplitude at moment~$t$ of solutions to~\eqref{def:recurrence}}, with initialization in~$\SS$. 
Furthermore,
\begin{align}
\label{def:max-instant-amplitude}
\sM_{t}(\RR|\SS) &:= \sup_{f \in P_n(\RR)} M_{t}(f|\SS)
\end{align}
is the maximum amplitude (at~$t$) for the associated with~$\RR$ {\em class of equations}.
Finally, we define
\begin{align}
\label{def:max-peak-amplitude}
\sM_*(\RR|\SS) := \sup_{t \in \Z:\, t \ge n} \sM_{t}(\RR|\SS),
\end{align}
the maximum {\em peak amplitude} in the class of equations associated to~$\RR$, with initialization over~$\SS$. 
The basic version of the problem corresponds to~$(\RR,\SS) = (\D(r)^n, \D(1)^n)$, where~$\D(r)$ is the closed origin-centered disc of radius~$r$.
Meanwhile, our main result allows for each domain to be a {polydisc}
\begin{equation*}
\D_n(r_1,\dots,r_n) := \D(r_1) \times \dots \times \D(r_n)
\end{equation*}
with arbitrary radii~$r_1, \dots, r_n$ that are allowed both to vanish and to exceed~$1$. Let us now state it.


\begin{theorem}
\label{th:main-result}
For integer~$t \ge n > 0$, functional~\eqref{def:max-instant-amplitude} evaluated on arbitrary pair of polydiscs satisfies
\begin{align}
\label{eq:main-result}
\sM_{t}(\D_n(r_1,\dots,r_n)|\D_n(w_1,\dots,w_n)) 
&= M_{t} (\textstyle\prod_{k = 1}^n (z-r_k e^{i\theta})|\D_n(w_1,\dots,w_n)) \quad \forall \theta \in \R\\
&=\sum_{k \in [n]} w_k \sch_{(t-n|n-k)}(r_1,\dots,r_n); \notag
\end{align}
here~$\sch_{(a|b)}$ is the Schur polynomial of the hook~$(a|b) = (1_{a+1},1^b)$.
Thus,~$|x_t|$ is maximized with cophase roots.
The supremum in the right-hand side of~\eqref{eq:main-result} is attained on~$x_{k-1} = -w_{k} e^{i(\pi-\theta)(n-k+1)}$,~$k \in [n]$.
\end{theorem}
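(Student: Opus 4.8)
The plan is to peel the double supremum defining $\sM_t$ in~\eqref{def:max-instant-amplitude} into nested reductions, the crux being that, for a fixed equation, extrapolating $x_t$ from the initial segment is a Vandermonde (``monomial'') interpolation problem whose coefficients are, up to sign, hook Schur polynomials of the characteristic roots. \emph{Reduction to a single equation.} Fix a monic $f\in\cP_n$ with root tuple $z=(z_1,\dots,z_n)$. A solution of~\eqref{def:recurrence} depends \emph{linearly} on its initial segment $(x_0,\dots,x_{n-1})$, so $x_t=\sum_{k=1}^{n}c_{k-1}(f)\,x_{k-1}$ for coefficients $c_0(f),\dots,c_{n-1}(f)$ depending only on the roots. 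Maximizing a linear form over a polydisc is immediate: $M_t(f|\D_n(w_1,\dots,w_n))=\sum_{k=1}^{n}w_k\,|c_{k-1}(f)|$, the optimum reached by phase-aligning the summands, i.e.\ $x_{k-1}=w_k\,\overline{c_{k-1}(f)}\,|c_{k-1}(f)|^{-1}e^{i\phi}$ for a common phase $\phi$ (with $x_{k-1}$ free when $c_{k-1}(f)=0$). Since the $c_{k-1}$ will be symmetric in the roots, the outer supremum over $f\in P_n(\D_n(r_1,\dots,r_n))$ becomes a supremum over tuples with $z_k\in\D(r_k)$ (all suprema attained by compactness), so
\[
\sM_t\bigl(\D_n(r_1,\dots,r_n)|\D_n(w_1,\dots,w_n)\bigr)=\sup_{z_k\in\D(r_k)}\;\sum_{k=1}^{n}w_k\,|c_{k-1}(z)|.
\]

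\emph{The interpolation step.} Here I would pin down $c_{k-1}$, and this is the step I expect to be the main obstacle, since getting the right hook and sign takes care. For distinct roots $x_t=\sum_j\alpha_j z_j^t$, with $\alpha$ recovered from $(x_0,\dots,x_{n-1})$ through a Vandermonde system; by Cramer's rule $c_{k-1}$ is the ratio of the alternant obtained by substituting the row $(z_1^t,\dots,z_n^t)$ for the $k$-th monomial row of the Vandermonde matrix to the Vandermonde determinant, and for $t\ge n$ this bialternant is precisely the hook Schur polynomial, $c_{k-1}(z)=(-1)^{n-k}\sch_{(t-n|n-k)}(z_1,\dots,z_n)$ (the sign coming from moving the exponent $t$ to the top); the coinciding-roots case follows by continuity, both sides being polynomials in $z$. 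A generating-function computation gives the same formula directly: from $X(s)=\sum_{t\ge0}x_t s^t=N(s)\big/\prod_k(1-z_k s)$ with $N$ of degree $<n$ built from the initial data and the truncations of $\prod_k(1-z_k s)$, and $\prod_k(1-z_k s)^{-1}=\sum_m h_m(z)s^m$, one reads off $c_{k-1}(z)=\sum_{p=0}^{n-k}(-1)^p e_p(z)\,h_{t-k+1-p}(z)$, which the Jacobi--Trudi identity for the hook $(t-n|n-k)=(t-n+1,1^{n-k})$ collapses to the same Schur polynomial; here $e_p,h_m$ are the elementary and complete homogeneous symmetric polynomials in $z_1,\dots,z_n$.

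\emph{Schur positivity finishes the computation.} The payoff of the previous step is the nonnegativity of the monomial coefficients of $\sch_\lambda$: writing $\sch_\lambda=\sum_\alpha K_{\lambda\alpha}\,z^\alpha$ with Kostka numbers $K_{\lambda\alpha}\ge0$, every $z$ with $|z_k|\le r_k$ obeys
\[
|\sch_\lambda(z)|\le\sum_\alpha K_{\lambda\alpha}\,|z_1|^{\alpha_1}\cdots|z_n|^{\alpha_n}=\sch_\lambda(|z_1|,\dots,|z_n|)\le\sch_\lambda(r_1,\dots,r_n),
\]
with equality throughout when $z_k=r_k e^{i\theta}$, since then $\sch_\lambda(z)=e^{i\theta|\lambda|}\sch_\lambda(r_1,\dots,r_n)$ by homogeneity while $\sch_\lambda(r_1,\dots,r_n)\ge0$. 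Applying this to each hook $\lambda=(t-n|n-k)$, $k\in[n]$, and summing yields $\sM_t(\D_n(r_1,\dots,r_n)|\D_n(w_1,\dots,w_n))=\sum_{k=1}^{n}w_k\,\sch_{(t-n|n-k)}(r_1,\dots,r_n)$, and the crucial observation is that the \emph{same} cophase tuple $z_k=r_k e^{i\theta}$ is simultaneously optimal for every hook --- that is, the single polynomial $\prod_{k=1}^{n}(z-r_k e^{i\theta})$ attains the maximum, giving the first equality in~\eqref{eq:main-result}.

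\emph{The optimal initialization.} Finally, at $f=\prod_k(z-r_k e^{i\theta})$ homogeneity gives $c_{k-1}(f)=(-1)^{n-k}e^{i\theta(t-k+1)}\sch_{(t-n|n-k)}(r_1,\dots,r_n)$, so the phase-alignment recipe from the first reduction, with common phase $e^{i\phi}=e^{i\theta(t-n)}$, produces $x_{k-1}=w_k(-1)^{n-k}e^{-i\theta(n-k+1)}=-w_k\,e^{i(\pi-\theta)(n-k+1)}$, the claimed maximizer (and when $\sch_{(t-n|n-k)}(r_1,\dots,r_n)=0$, the coordinate $x_{k-1}$ is unconstrained, so this value is still admissible).
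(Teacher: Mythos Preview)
Your argument is correct and follows the same three-stage architecture as the paper's proof---reduce to a weighted $\ell_1$-norm of the interpolation coefficients, identify each coefficient as $(-1)^{n-k}\sch_{(t-n|n-k)}(z_{1:n})$, then invoke monomial positivity of Schur polynomials to push each $|z_k|$ to $r_k$ and align the phases. The genuine difference is in the middle step. You recover the hook Schur polynomial either directly from the bialternant definition (Cramer's rule on the Vandermonde system, the sign $(-1)^{n-k}$ coming from permuting the exponent $t$ into its sorted position) or, equivalently, from the generating-function identity $c_{k-1}=\sum_{p=0}^{n-k}(-1)^p e_p h_{t-k+1-p}$ collapsed via the hook Jacobi--Trudi formula. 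The paper instead uses Cauchy's contour-integral representation of the Lagrange remainder (Lemma~\ref{lem:integral-representation}) and residue computation to arrive at the same bilinear $e$--$h$ expression (Proposition~\ref{prop:monomial-interpolation-any-k}), then invokes Fact~\ref{fact:hook-bilinear}. Your route is more elementary and arguably more natural for this particular statement, since the Vandermonde/bialternant connection is staring one in the face; the paper's analytic route costs a lemma but pays dividends later, as the contour-integral machinery simultaneously yields the pointwise error bound of Proposition~\ref{prop:monomial-interpolation-0} and the shifted-variable identity of Proposition~\ref{prop:monomial-interpolation-error-via-schur}, both used in Section~\ref{sec:shadrin}.
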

%
%
Deferring the discussion of the Schur functions to~Section~\ref{sec:partitions}, let us explore the implications of~\eqref{eq:main-result}.

First of all, observe that according to Theorem~\ref{th:main-result},~$\sM_t$ is delivered by the {\em same} set of optimal pairs~$(f; (x_{0},\dots,x_{n-1}))$, regardless of~$t$. 
As the result, computing the maximum peak amplitude~\eqref{def:max-peak-amplitude} for polydiscs reduces to computing the maximum amplitudes for~$t \ge n$.
In fact, the peak amplitude is infinite if~$\max\{r_1, \dots, r_n\} > 1$, yet Theorem~\ref{th:main-result} still holds and gives the maximum amplitude {at~$t$}.

Second, note that maximization over~$\SS$ is easy: once~$f \in P_n(\RR)$ is fixed,~$x_t$ becomes a linear functional.
The actual challenge is to maximize over~$\RR$ and establish that the cophase configuration is optimal.
To do so, we proceed in two steps. 
In the first step, we reformulate~$M_t(f|\SS)$, cf.~\eqref{def:instant-amplitude}, as the sum of absolute values of the coefficients for a certain interpolating polynomial, with~$z_1,\dots,z_n$ as the nodes,
and express the coefficients explicitly as homogeneous symmetric polynomials in~$z_1, \dots, z_n$. 
In the second step, we establish the Schur positivity of these polynomials, which implies the result.
(If unfamiliar with the concept, the reader will understand this implication upon consulting Section~\ref{sec:partitions}.)



Finally, let us mention that Theorem~\ref{th:main-result} can be applied in the more general scenario where~$\RR$ and~$\SS$ are origin-centered {Reinhardt domains}, i.e.~are closed under entrywise phase modulation~\cite{vladimiroff1966methods,hormander1973introduction,shabat1992introduction}. 
In this case, Theorem~\ref{th:main-result} gives
$
\sM_t(\RR|\SS) = \sM_t(\RR_+|\SS_+),
$
where~$\mathbf{D}_+$ is the ``radius hull'' of~$\mathbf{D} \in \C^{n}$, obtained by taking the entrywise moduli of all points. 
For {logarithmically convex} Reinhardt domains, evaluating~$\sM_t(\RR_+|\SS_+)$ amounts to solving a bilevel optimization problem with a special structure, and we demonstrate how to exploit this structure when explicit evaluation is out of the question.
In particular, we give an algorithm to compute~$\sM_t(\RR_+|\SS_+)$ in the setup where~$\RR_+$ is log-affine, and~$\SS_+$ admits a linear maximization oracle.
We also give some examples of~$(\RR_+, \SS_+)$ allowing for explicit characterizations of the maximizing pair, in the vein of the characterization in Theorem~\ref{th:main-result}. 

\subsection*{Contributions and outline of the paper.}
In Section~\ref{sec:warmup}, we carry out a preliminary study of the amplitude maximization problem. First, we reduce it to a certain polynomial interpolation problem -- namely, Lagrange interpolation of monomials in the nodes~$z_1, \dots, z_n$, with error measured by the weighted sum of absolute coefficients of the Lagrange polynomial. 
We then compute~$\sM_t(\D(r)^n|\D(1)^n)$ explicitly as a function of~$t,n,r$ (assuming Theorem~\ref{th:main-result} to be valid).
Finally, we present some preliminary results with elementary proofs, and shed light on the arising challenges.
%
Section~\ref{sec:partitions} gives background on symmetric functions and partitions, needed to prove Theorem~\ref{th:main-result}. 
The proof is presented in Section~\ref{sec:result}, along with an alternative calculation of~$\sM_t(\D(r)^n|\D(1)^n)$, this time using combinatorial tools. 
%
Section~\ref{sec:reinhardt} explores the implications of Theorem~\ref{th:main-result} for Reinhardt domains, alluded to in the previous paragraph.
%
%
In Section~\ref{sec:shadrin}, we apply our techniques to study the problem of approximating the derivatives of a real entire function by those of its Lagrange polynomial. In particular, we consider the case of pairwise conjugate nodes (real characteristic polynomial), propose some conjectures on the worst-case error, and then reformulate these in terms of Schur polynomials.
\subsection*{Notation.}
\label{sec:notation}
For integer~$n \ge 0$, we define~$\N_n  := \{n, n+1, \dots\}$ and~$[n] := \N_1 \setminus \N_{n+1}$.
For any~$k \in [n]$, we abbreviate~$[n] \setminus \{k\}$ to~$[n] \setminus k$.
For integers~$m \le n$,  we write~$(a_m, \dots, a_n) \in \C^{n-m+1}$ as~$a_{m:n}$, and we use the abbreviation~$g(x_{m:n})$ for the vector~$(g(x_m),\dots,g(x_n)) \in \C^{n-m+1}$ with arbitrary~$g: \C \to \C$.
The vector with~$n$ ones is~$\ones_n$. For~$p \in [1,+\infty]$, we let~$\|\cdot\|_{p}$ be the~$p$-norm on~$\C^n$. 
$\Conv(\boldsymbol{X})$ is the convex hull of~$\boldsymbol{X} \subset \R^n$. 
We let~$\R_+^n$ be the nonnegative orthant, and~$a_{1:n} \ge b_{1:n}$ means that~$a_{1:n} - b_{1:n} \in \R_+^{n}$. 
We let~$A^\top$ and~$A^\htop$ be the ordinary and Hermitian transposes of~$A \in \C^{m \times n}$. 
$\T(r)$ and~$\D(r)$ are the origin-centered circle and~{closed} disk of radius~$r$;~$\T,\D$ are the unit ones.
We define the torus
$
\T_n(r_{1},\dots,r_n) := \T(r_1) \times \dots \times \T(r_n).
$
We write the polynomial~$\prod_{j \in [n]} (\cdot-z_j)$ as~$P_n[z_{1:n}]$.
Letting~$D_n(z_{1:n})$ be the diagonal matrix with~$z_{1:n}$ on the diagonal, define the Vandermonde matrix
\begin{equation*}
V_n(z_{1:n}) 
:= 
\begin{pmatrix}
1      & 1      & \cdots & 1 \\
z_1    & z_2    & \cdots & z_n \\
\vdots & \vdots & \ddots & \vdots \\
z_1^{n-1} & z_2^{n-1} & \cdots & z_n^{n-1}
\end{pmatrix},
\end{equation*}
which is nonsingular if~$z_{1:n}$ is {\em simple}, i.e.~has all distinct entries.
The {\em companion matrix} of~$f \in \cP_n$ is
\begin{equation}
\label{def:companion-matrix}
A_n(f) := 
\left(
\begin{array}{c|ccc}
0\\
\vphantom{\int_0^1}\smash{\vdots} & & I_{n-1} & \\
0\\
\hline 
-\sf_{0} & -\sf_{1} & \cdots & -\sf_{n-1}
\end{array}
\right).
\end{equation}
Using it allows to vectorize~\eqref{def:recurrence} by passing to the first-order vector recurrence with respect to~$\xi^{(t)} \in \C^n$,
\begin{equation}
\label{def:recurrence-vectorized}
\xi^{(t+1)} = A_n(f) \xi^{(t)}, \quad t \in \N_0,
\end{equation}
linked to~\eqref{def:recurrence} via~$\xi^{(t)} = x_{t\,:\,t+n-1}$.
Note that, omitting~$z_{1:n}$ for brevity, one has~$A_n(P_n) = V_n D_n V_n{}^{-1}$.

\section{Preliminary results}
\label{sec:warmup}
In this section, we conduct a preliminary examination of the amplitude maximization problem. 
We start with a reduction of the amplitude maximization problem to the interpolation of monomials, which is likely known.
The subsequent subsections contain some preliminary results and discussions; these are not used in the proof of Theorem~\ref{th:main-result}, but contextualize it.
These results are obtained using elementary analytical tools, which highlights both the usefulness and the limitations of such tools.\vspace{-0.1cm}


\subsection{Reduction to interpolation of monomials}
For any simple grid~$z_{1:n}$ and tuple~$v_{1:n} \in \C^n$, there is a unique polynomial of degree~$\le n-1$ with the values~$v_{1:n}$ on~$z_{1:n}$ -- the {\em Lagrange polynomial} for the data~$(z_{1:n},v_{1:n})$.
Recall that the coefficients of this polynomial ($0^\textup{th}$ to~$n-1^\textup{st}$) are given by
\begin{equation}
\label{fact:Lagrange-via-Vandermonde}
L_n[v_{1:n}|z_{1:n}] = V_n(z_{1:n})^{-\top} v_{1:n},
\end{equation}
since the~$m^{\textup{th}}$ row of~$V_n(z_{1:n})$ lists the values of the monomial~$z^{m-1}$ on~$z_{1:n}$. 
Thus, for any~$g \in \cP_{n-1}$,
\[
\bald
(\xi^{(t)})^\top g_{0:n-1} 
\hspace{-0.05cm}=\hspace{-0.05cm} (\xi^{(0)})^\top V_n(z_{1:n})^{-\top} D_n(z_{1:n})^{t} V_{n}(z_{1:n})^{\top} g_{0:n-1} 
\hspace{-0.05cm}&=\hspace{-0.05cm} (\xi^{(0)})^\top L_n[D_n(z_{1:n})^{t} V_{n}(z_{1:n})^{\top} g_{0:n-1} | z_{1:n}],
\eald
\]
where the interpolated values~$D_n(z_{1:n})^{t} V_{n}(z_{1:n})^{\top} g_{0:n-1}$ are those of~$z^{t} g(z)$. 
Picking~$g(z) = 1$, we get

\begin{fact}
\label{fact:interpolation}
For~$t \in \N_n$, the solution to~\eqref{def:recurrence} with~$f = P_n[z_{1:n}]$ and initialization~$x_{0:n-1} \in \C^n$ satisfies
\[
x_{t} = \psi_{t}{}^{\top} x_{0:n-1}
\]
where~$\psi_{t} := L_n[(z_{1:n})^t|z_{1:n}{}^{\vphantom m}]$ are the coefficients of the degree~$n-1$ polynomial interpolating~$z^{t}$ on~$z_{1:n}$.
Note that in the case of~$t = n$, the interpolating polynomial is explicitly given by~$\psi_n(z) = z^n - f(z)$.
\end{fact}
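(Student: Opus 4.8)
The statement to prove is Fact~\ref{fact:interpolation}, and most of its content has in fact already been derived in the displayed computation immediately preceding it, so the plan is largely one of bookkeeping: making the substitution $g(z) = 1$ explicit and justifying the formula $\psi_n(z) = z^n - f(z)$ for the special case $t = n$. First I would record the diagonalization $A_n(f) = V_n D_n V_n^{-1}$ (valid since we take $z_{1:n}$ simple and hence $V_n$ nonsingular), iterate the vectorized recurrence~\eqref{def:recurrence-vectorized} to get $\xi^{(t)} = A_n(f)^t \xi^{(0)} = V_n D_n^t V_n^{-1} \xi^{(0)}$, and then extract the relevant scalar. Taking $g_{0:n-1} = e_1$ (the first standard basis vector, i.e.\ $g(z) = 1$) in the preceding display, the interpolated values $D_n(z_{1:n})^t V_n(z_{1:n})^\top e_1$ are exactly $(z_{1:n})^t$, since $V_n^\top e_1 = \ones_n$ is the vector of values of the constant $1$ and multiplying by $D_n^t$ raises each node to the $t$-th power. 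Hence $x_t = (\xi^{(0)})^\top L_n[(z_{1:n})^t \mid z_{1:n}] = \psi_t^\top x_{0:n-1}$ with $\psi_t = L_n[(z_{1:n})^t \mid z_{1:n}]$, which is precisely the claim; note here that $\xi^{(0)} = x_{0:n-1}$ by the identification $\xi^{(t)} = x_{t:t+n-1}$.

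For the closing remark about $t = n$: the polynomial $z^n - f(z)$ has degree $\le n-1$ because $f$ is monic of degree $n$, so the leading terms cancel. Moreover, at each node $z_j$ we have $f(z_j) = 0$ (since $f = P_n[z_{1:n}]$), so $z_j^n - f(z_j) = z_j^n$; thus $z^n - f(z)$ is a polynomial of degree $\le n-1$ agreeing with $z^n$ on the $n$ distinct nodes $z_{1:n}$, and by uniqueness of the Lagrange interpolant it equals the interpolating polynomial, i.e.\ $\psi_n(z) = z^n - f(z)$. Equivalently, this identifies the coefficient vector $\psi_n = -f_{0:n-1}$, recovering the last row of the companion matrix $A_n(f)$ — a sanity check consistent with $\xi^{(n-1)} \mapsto \xi^{(n)}$ reading off $x_n = -\sum_{k=0}^{n-1} f_k x_k$.

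There is no genuine obstacle here; the only thing requiring a small amount of care is keeping the transposes straight in~\eqref{fact:Lagrange-via-Vandermonde} — the coefficient-extraction map is $V_n^{-\top}$, not $V_n^{-1}$, because the $m$-th \emph{row} of $V_n$ lists the values of $z^{m-1}$ — and correspondingly making sure that the interpolation identity is applied to the correct side of the bilinear form $(\xi^{(0)})^\top(\cdot)$. Once that is set up, the argument is a direct specialization of the identity already displayed before the Fact, and the $t=n$ remark follows from uniqueness of Lagrange interpolation together with the vanishing of $f$ at its own roots.
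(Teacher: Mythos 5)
Your proposal is correct and follows essentially the same route as the paper: diagonalize $A_n(f) = V_n D_n V_n^{-1}$, iterate the vectorized recurrence, and interpret $V_n^{-\top}(z_{1:n})^t$ via Fact~\ref{fact:Lagrange-via-Vandermonde} as the Lagrange coefficient vector, which is precisely the paper's displayed computation before the Fact with $g_{0:n-1} = e_1$. The only additions are your explicit verification that $V_n^\top e_1 = \ones_n$ and the degree-and-vanishing argument for $\psi_n(z) = z^n - f(z)$, both of which are routine details the paper leaves to the reader.
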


\subsection{Explicit computation with identical roots.}
\label{sec:uniform}
Our first result is an exlicit formula for the maximum amplitudes in the case of {\em identical} characteristic roots and uniformly bounded initialization.
Recall the divided-differences form of the Lagrange polynomial (defined in~\eqref{fact:Lagrange-via-Vandermonde} in the vector form):
\begin{equation}
\label{def:Lagrange-explicit}
L_n[v_{1:n}|\zeta_{1:n}] = \sum_{k \in [n]} v_k \prod_{j \in [n] \setminus k} \frac{\zeta-\zeta_j}{\zeta_k - \zeta_j}. 
\end{equation}

\begin{proposition}
\label{prop:repeated-roots-explicit}
Let~$f_{n,r,\theta}(z) = (z-re^{i\theta})^n$ with~$r > 0$ and~$\theta \in \R$. 
For any~$t \in \N_n$, it holds that
\begin{equation}
\label{eq:repeated-roots-explicit}
M_{t}(f_{n,r,\theta}|\D^n)
= \binom{t}{n} r^{t-n} \sum_{k \in [n]} {n \choose k} \frac{k}{t - n + k} r^{k}
= \binom{t}{n} \int_{0}^r n u^{t-n}(1 + u)^{n-1}   du.
\end{equation}
In particular, one has~$M_{t}(f_{n,r,\theta}|\D^n) \le \binom{t}{n}r^{t-n}[(1+r)^n-1]$, where the inequality is strict for~$t > n$. 
Moreover, the value~$M_{t}(f_{n,r,\theta}|\D^n)$ is attained with initialization~$x_{t} = -e^{i(\pi-\theta)(n-t)}$,~$0 \le t \le n-1$.
\end{proposition}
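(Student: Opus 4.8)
The plan is to use Fact 1.1 to reduce the computation to Lagrange interpolation of $z^t$ at the single node $z_1=\dots=z_n=re^{i\theta}$, and then exploit the fact that, with coinciding nodes, the interpolating polynomial is just the degree $n-1$ Taylor polynomial of $z^t$ at that point. Concretely, first I would observe that by Fact 1.1, $x_t = \psi_t^\top x_{0:n-1}$ where $\psi_t$ are the coefficients of the polynomial interpolating $z^t$ at the nodes; since $|x_{k-1}| \le 1$ for $k \in [n]$, the maximum of $|x_t|$ over $\D^n$ is $\|\psi_t\|_1$, the sum of absolute values of those coefficients, attained by matching phases, i.e. $x_{k-1} = \overline{(\psi_t)_k}/|(\psi_t)_k|$. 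So it remains to compute $\|\psi_t\|_1$ explicitly and identify the optimal initialization.

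Next I would compute $\psi_t$ for the confluent node $re^{i\theta}$. Writing $z = re^{i\theta} + u$, the Taylor expansion gives $z^t = \sum_{j\ge 0}\binom{t}{j}(re^{i\theta})^{t-j} u^j$, so the interpolating polynomial (degree $\le n-1$, matching $z^t$ and its first $n-1$ derivatives at $re^{i\theta}$) is the truncation $\sum_{j=0}^{n-1}\binom{t}{j}(re^{i\theta})^{t-j} u^j$. Expanding each $u^j = (z-re^{i\theta})^j$ back into powers of $z$ and collecting, or more simply just summing absolute values of the $u$-expansion coefficients after noting the triangular change of basis preserves the relevant sign structure, I would arrive at
\[
M_t(f_{n,r,\theta}|\D^n) = \sum_{j=0}^{n-1}\binom{t}{j} r^{t-j}\binom{n-1}{j}\cdot(\text{something}),
\]
then reindex via $k = n-j$ to match the claimed form $\binom{t}{n} r^{t-n}\sum_{k\in[n]}\binom{n}{k}\frac{k}{t-n+k}r^k$. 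The cleanest route to the binomial identity is to verify $\binom{t}{j}\binom{n-1}{j}$-type products collapse against $\binom{t}{n}\binom{n}{k}\frac{k}{t-n+k}$ using $\binom{t}{j} = \binom{t}{n}\binom{n}{j}/\binom{t}{j}\cdots$ — i.e., the identity $\binom{t}{n-k}\big/\binom{t}{n} = \prod_{i}\frac{\dots}{\dots}$, which after simplification is exactly $\frac{k}{t-n+k}\cdot\frac{(n-k)!\,n!}{\dots}$; this is a routine hypergeometric-style manipulation. For the integral representation, I would recognize $\sum_{k\in[n]}\binom{n}{k}\frac{k}{t-n+k}r^k = \int_0^r \sum_{k\in[n]}\binom{n}{k} k\, u^{t-n+k-1}\,du = \int_0^r n u^{t-n}\sum_{k\in[n]}\binom{n-1}{k-1}u^{k-1}\,du = \int_0^r n u^{t-n}(1+u)^{n-1}\,du$, using $k\binom{n}{k} = n\binom{n-1}{k-1}$.

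From the integral form the stated bound is immediate: $n u^{t-n}(1+u)^{n-1} \le n r^{t-n}(1+u)^{n-1}$ on $[0,r]$ with equality throughout only when $t=n$, and $\int_0^r n(1+u)^{n-1}du = (1+r)^n - 1$; hence $M_t \le \binom{t}{n} r^{t-n}[(1+r)^n-1]$, strict for $t>n$ since then $u^{t-n} < r^{t-n}$ on $[0,r)$. Finally, for the optimal initialization I would trace back the phase-matching condition: the coefficient of $z^{k-1}$ in $\psi_t$, after expanding the Taylor truncation, has the form (positive real)$\times (re^{i\theta})^{\text{power}}\times(-1)^{\text{power}}$ coming from expanding $(z-re^{i\theta})^j$; carefully bookkeeping the powers gives that the optimal $x_{k-1}$ has phase $e^{i(\pi-\theta)(n-k+1)}$ up to sign, i.e. $x_{k-1} = -e^{i(\pi-\theta)(n-k+1)}$, equivalently in the paper's $0$-indexed notation $x_\tau = -e^{i(\pi-\theta)(n-\tau)}$ for $0 \le \tau \le n-1$. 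The main obstacle is the bookkeeping in this last step — tracking which coefficient of $z^m$ in the confluent Lagrange polynomial is a positive multiple of $(re^{i\theta})^{t-m}(-1)^{?}$ after the change of basis from the $u$-expansion to the $z$-expansion — since the $u\to z$ transition mixes terms; I would circumvent this by noting that $M_t$ is achieved by some unimodular $x_{0:n-1}$ and that the $z^m$ coefficient of the Taylor truncation in the $u$-variable already has a pure phase $e^{i\theta(t-m)}$ times a real number of predictable sign, then checking the real-positivity claim reduces to a single sign computation that the integral formula (being manifestly positive) corroborates.
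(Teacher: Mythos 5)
Your route is genuinely different from the paper's and it does work, but you left the one step you yourself flagged as the obstacle unfinished, and the paper's parametrization sidesteps it. You invoke Fact~\ref{fact:interpolation} and compute $\psi_t$ as the confluent Hermite/Taylor truncation at the repeated node $re^{i\theta}$, then must convert from powers of $u=z-re^{i\theta}$ back to monomials in $z$ to read off $\|\psi_t\|_1$. That conversion gives, for the coefficient of $z^m$ ($0\le m\le n-1$),
\[
(\psi_t)_{m+1}=(re^{i\theta})^{t-m}\binom{t}{m}\sum_{l=0}^{n-1-m}\binom{t-m}{l}(-1)^{l}=(re^{i\theta})^{t-m}\binom{t}{m}(-1)^{n-1-m}\binom{t-m-1}{n-1-m},
\]
using the alternating partial-sum identity $\sum_{l=0}^{L}\binom{N}{l}(-1)^l=(-1)^L\binom{N-1}{L}$. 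This is the ``single sign computation'' you deferred; without it you cannot actually conclude that each monomial coefficient has a pure phase with a definite sign, and appealing to ``the integral formula corroborates'' is circular since that formula is the output of the computation. Once the identity is in hand, $\|\psi_t\|_1=\sum_{m=0}^{n-1}r^{t-m}\binom{t}{m}\binom{t-m-1}{n-1-m}$, which after $k=n-m$ matches the paper's sum (the binomial identity $\binom{t}{n-k}\binom{t-n+k-1}{k-1}=\binom{t}{n}\binom{n}{k}\tfrac{k}{t-n+k}$ is routine), and the phase-matching optimizer you get, $x_m=e^{-i\theta(t-m)}(-1)^{n-1-m}$, is the paper's $x_m=-e^{i(\pi-\theta)(n-m)}$ times a common factor $e^{-i\theta(t-n)}$, so both are valid. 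The paper instead writes the general solution as $x_t=r^{t-n}e^{i\theta(t-n)}P(t)$ with $\deg P\le n-1$ and performs Lagrange interpolation of $P$ in the \emph{time} variable on the integer nodes $\{0,\dots,n-1\}$; there the divided-difference weights $\prod_{j\ne k}(t-n+j)/(j-k)$ have a manifestly positive numerator and sign $(-1)^{k-1}$ in the denominator, so no alternating-sum identity is needed. Your approach buys a direct connection to the confluent-node picture used later in the paper (Proposition~\ref{prop:monomial-interpolation-any-k} with $z_{1:n}=re^{i\theta}\ones_n$), while the paper's buys a cleaner sign argument; both yield the same formula, integral form, and bound.
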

\begin{proof}
The general solution to~\eqref{def:recurrence} is~$x_t = r^{t-n} e^{i\theta (t-n)} P(t)$, for degree~$n-1$ polynomial~$P$.
Plugging in~\eqref{def:Lagrange-explicit} the initial condition~$P(t) = L_n[v_{1:n}|t_{1:n}](t)$, with~$t_k = n-k$ and~$v_k = x_{n-k} r^{k} e^{i\theta k}$, for~$k \in [n]$,
\begin{equation}
\label{eq:repeated-roots-implicit}
|x_t| = r^{t-n} |P(t)|
= r^{t-n} \Bigg| \sum_{k \in [n]} x_{n-k} r^{k} e^{i\theta k} \prod_{j \in [n] \setminus k} \frac{t-n+j}{j-k} \Bigg|
\le r^{t-n} \sum_{k \in [n]} r^{k}  \prod_{j \in [n] \setminus k} \frac{t-n+j}{|j-k|}.
\end{equation}
This bound is attained on~$x_{n-k} = e^{-i\theta k} \sign[\prod_{j \in [n]\setminus k} (j-k)] =  e^{-i\theta k} (-1)^{k-1} = -e^{i(\pi-\theta)k}$.
Thus,
\[
M_{t}(f_{n,r,\theta}|\D^n)
= r^{t-n}\sum_{k \in [n]} r^{k} \prod_{j \in [n] \setminus k} \frac{t-n+j}{|j-k|} 
= r^{t-n}\binom{t}{n} \sum_{k \in [n]} \binom{n}{k} \frac{k}{t-n+k} r^{k}
\]
where the last identity follows by simple algebra.
The integral form of the right-hand side follows by
\begin{align*}
\frac{\partial}{\partial r} \sum_{k \in [n]} \binom{n}{k} \frac{k}{t-n+k} r^{t-n+k}  
= n\sum_{k \in [n]} \binom{n-1}{k-1} r^{t-n+k-1}  
= nr^{t-n} (1 + r)^{n-1}.
\end{align*}
Note that for~$t > n$, one has~$\int_{0}^r nu^{t-n}(1+u)^{n-1} du < r^{t-n} \int_{0}^r n(1+u)^{n-1}du = r^{t-n} [(1+r)^{n}-1]$.
\end{proof}
\begin{remark}
{\em The work~\cite{polyak2018peak} linked~$M_t(f_{n,r,\theta}|\D^n)$ to Lagrange interpolation and used~\eqref{eq:repeated-roots-implicit}~\cite[Sec.~2.2]{polyak2018peak}, but recognized neither of the identities in~\eqref{eq:repeated-roots-explicit}, nor the estimate~$M_{t}(f_{n,r,\theta}|\D^n) \le \binom{t}{n}r^{t-n}[(1+r)^n-1]$.
}
\end{remark}
\begin{remark}\vspace{-0.05cm}
{\em 
The integral in~\eqref{eq:repeated-roots-explicit} can be expressed in terms of the hypergeometric function~$\vphantom{F}_2 F_1$~\cite{andrews1999special}.
}
\end{remark}
\subsection{Sharp bound for~$t = n$.}
The special case of Theorem~\ref{th:main-result} with~$t = n$, i.e.~maximization of the {\em instantaneous} amplitude, turns out to be easy. 
For convenience, we provide a standalone proof here.


\begin{proposition}
\label{prop:time-zero}
Theorem~\ref{th:main-result} holds for~$t = n$, so~$\sM_n(\D_n(r_{1:n})|\D_n(w_{1:n})) = M_n(P_n(e^{i\theta}r_{1:n})|\D_n(w_{1:n}))$
for any~$r_{1:n}, w_{1:n} \in \R_+^n$ and~$\theta \in \R$.
In particular, it holds that
$
\sM_n(\D_n(r_{1:n})|\D^n) = \prod_{j \in [n]}(1+r_j) -1.
$
\end{proposition}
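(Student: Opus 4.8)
The plan is to exploit the explicit description of the $t=n$ interpolating polynomial furnished by Fact~\ref{fact:interpolation}, which collapses the interpolation step entirely and reduces the whole statement to Vieta's formulas plus one triangle inequality.

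First I would note that for $t=n$ the polynomial interpolating $z^n$ on the roots $z_{1:n}$ of $f = P_n[z_{1:n}]$ is $\psi_n(z) = z^n - f(z) = -\sum_{j=0}^{n-1} f_j z^j$, so that $x_n = -\sum_{k \in [n]} f_{k-1} x_{k-1}$ is a \emph{fixed} linear functional of the initialization. Maximizing $|x_n|$ over $x_{0:n-1} \in \D_n(w_{1:n})$ is then immediate: choosing the phase of each $x_{k-1}$ to align $-f_{k-1}x_{k-1}$ with the positive real axis and its modulus equal to $w_k$ gives $M_n(f|\D_n(w_{1:n})) = \sum_{k \in [n]} w_k\,|f_{k-1}|$.

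Next I would invoke Vieta: for $f = P_n[z_{1:n}]$ one has $f_{k-1} = (-1)^{n-k+1} e_{n-k+1}(z_{1:n})$, where $e_m$ denotes the $m$-th elementary symmetric polynomial, so $M_n(P_n[z_{1:n}]|\D_n(w_{1:n})) = \sum_{k \in [n]} w_k\,|e_{n-k+1}(z_{1:n})|$. The only point needing a little care is the observation that, since each $e_m$ has nonnegative coefficients, the triangle inequality together with the monotonicity of $e_m$ on $\R_+^n$ yields $|e_m(z_{1:n})| \le e_m(|z_1|,\dots,|z_n|) \le e_m(r_{1:n})$ for \emph{every} $m$ at once whenever $|z_j| \le r_j$; there is no trade-off between the $n$ summands, so supremizing over $f \in P_n(\D_n(r_{1:n}))$ gives the upper bound $\sM_n(\D_n(r_{1:n})|\D_n(w_{1:n})) \le \sum_{k \in [n]} w_k\, e_{n-k+1}(r_{1:n})$.

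Finally I would check that the cophase configuration $z_j = r_j e^{i\theta}$ saturates this bound: from $e_m(e^{i\theta} r_{1:n}) = e^{im\theta} e_m(r_{1:n})$ each $|e_m|$ attains $e_m(r_{1:n})$ simultaneously, forcing $M_n(P_n[e^{i\theta} r_{1:n}]|\D_n(w_{1:n})) = \sum_{k \in [n]} w_k\, e_{n-k+1}(r_{1:n})$, which is the asserted chain of equalities; the optimal initialization follows from the phase-alignment step above applied to $f_{k-1} = (-1)^{n-k+1} e^{i(n-k+1)\theta} e_{n-k+1}(r_{1:n})$, using $(-1)^m e^{-im\theta} = e^{i(\pi-\theta)m}$. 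The displayed consequence is then the case $w_k \equiv 1$, where $\sum_{m=1}^n e_m(r_{1:n}) = \prod_{j \in [n]}(1+r_j) - 1$ since $e_0 = 1$. To see this matches the general statement of Theorem~\ref{th:main-result} at $t=n$, one only recalls that the hook $(0\,|\,n-k)$ is the single column $(1^{n-k+1})$ and $\sch_{(1^m)} = e_m$, as will be recorded in Section~\ref{sec:partitions}. The main (and rather mild) obstacle is simply recognizing that no balancing across the $n$ terms is required; once that is seen, the estimate is a one-line triangle inequality and equality is transparent.
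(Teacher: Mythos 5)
Your proposal is correct and follows essentially the same route as the paper's own proof: express $x_n$ as the linear functional $-\sum_k f_{k-1}x_{k-1}$ of the initial data, invoke Vieta to write $|f_{k-1}| = |\se_{n-k+1}(z_{1:n})|$, bound each term by the triangle inequality with no trade-off across terms, and saturate with the cophase configuration. You spell out the $\SS = \D^n$ computation and the hook identification $\sch_{(1^m)} = \se_m$ a bit more explicitly than the paper does, but the argument is the same.
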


\begin{proof}
Any solution to~\eqref{def:recurrence} with~$f \in \cP_n$ satisfies~$x_{n} = -\sum_{k \in [n]} f_{k-1} x_{k-1}$. 
As the result, one has
$
M_{n}(f|\D_n(w_{1:n})) 
= \sum_{k \in [n]} w_k |f_{k-1}|
$, attained on~$x_{k-1} = {w_{k}}{f_{k-1}}/{|f_{k-1}|}$. 
But~$f = P_n[z_{1:n}]$ has coefficients given by~$f_{k-1} = (-1)^{n-k+1}\se_{n-k+1}(z_{1:n})$ in terms of elementary symmetric polynomials:
\[
\se_{d}(x_{1:n}) = \sum_{1 \le i_1 < \dots < i_d \le n} \; \prod_{k \in [d]} x_{i_k}.\vspace{-0.2cm}
\]
As the result,
\begin{equation}
\label{eq:cophase-easy}
\sM_{n}(\D_n(r_{1:n})|\D_n(w_{1:n})) 
= \sup_{z_{1:n} \in \D_n(r_{1:n})} \sum_{k \in [n]} w_k |\se_{n-k+1}(z_{1:n})| 
= \sum_{k \in [n]} w_k \se_{n-k+1}(r_{1:n})
\end{equation}
where the last step is by the triangle inequality. The right-hand side is~$M_{n}(P_n(e^{i\theta}r_{1:n})|\D_n(w_{1:n}))$.
The computation of~$M_n(P_n(e^{i\theta}r_{1:n})|\D^n)$ is similar to the proof of Proposition~\ref{prop:repeated-roots-explicit}, and we omit it.
\end{proof}


\begin{remark}
{\em 
If~$r_k w_k > 0$ for all~$k \in [n]$, the supremum in~\eqref{eq:cophase-easy} is attained {\em only} with~$f = P_n(e^{i\theta} r_{1:n})$, i.e.~with cophase roots of maximal magnitudes.
Indeed,~$|\se_n(z_1, ..., z_n)| \le \se_{n}(|z_1|,\dots,|z_n|) < \se_{n}(r_{1:n})$ if~$\exists k \in [n]$:~$|z_k| < r_k$.
But if~$z_k = e^{i\theta_k} r_k$ for all~$k \in [n]$,~$n \ge 2$, and~$e^{i\theta_{k_1}} \ne e^{i\theta_{k_2}}$ for some~$k_1, k_2 \in [n]$, 
\[
\begin{aligned}
|\se_1(z_{1:n})| 
\le \Big| r_{k_1} + r_{k_2} e^{i\left(\theta_{k_2}-\theta_{k_1}\right)} \Big| + \sum_{k \notin \{k_1,k_2\}} r_k
\le \sqrt{r_{k_1}^2 + r_{k_2}^2 + 2 r_{k_1} r_{k_2} \cos(\theta_{k_2}-\theta_{k_1})} +  \sum_{k \notin \{k_1,k_2\}} r_k.
\end{aligned}
\]
By the Cauchy-Schwarz inequality, the right-hand side is strictly less than~$\sum_{k \in [n]} r_k = \se_1(r_{1:n})$. 
\qed
}
\end{remark}

\begin{remark}
{\em 
Proposition~\ref{prop:time-zero} is reminiscent of Gautschi's bound~\cite{gautschi1962inverses}
for the max-absolute-row-sum norm of the inverse Vandermonde matrix:
$
\| V_n(z_{1:n})^{-1} \|_{\infty,\infty} 
\le \max_{j \in [n]} \prod_{k \ne j} {(1 + |z_j|)}{(|z_j - z_k|)^{-1}}.
$
This bound can be proved with a similar method; doing so without consulting~\cite{gautschi1962inverses} is a good exercise.
}
\end{remark}

\subsection{Crude bound for~$t > n$.}
There is a simple way of extending Proposition~\ref{prop:time-zero} to arbitrary~$t > n$, with a rapid loss of sharpness. While crude, the resulting bounds manage to capture the behavior of~$\sM_t(\D(r)^n|\D^n)$ qualitatively.
The key tool is the recurrence relation for interpolation polynomials~$\psi_t$.

\begin{lemma}
\label{lem:coherency}
Given a simple~$z_{1:n} \in \C^n$ and~$t \in \N_0$, let~$\psi_t(z)$ be the polynomial of degree~$n-1$ that interpolates the monomial~$z^{t}$ on~$z_{1:n}$.
Then~$\psi_{t+1} = A_n(f)^\top \psi_{t}$, where~$A_n(f)$ is the companion matrix, cf.~\eqref{def:companion-matrix}, of the characteristic polynomial~$f = P_n[z_{1:n}]$ in~\eqref{def:recurrence}. 
Equivalently, for~$t \in \N_0$ it holds that
\begin{align}
\label{eq:poly-rec}
\psi_{t+1}(z) &= z\psi_{t}(z) - \frac{\psi_{t}^{(n-1)}(0)}{(n-1)!} f(z).
\end{align}
\end{lemma}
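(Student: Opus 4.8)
The plan is to use the vectorized recurrence and Fact 1.1. We know $x_t = \psi_t^\top x_{0:n-1}$ for every initialization $x_{0:n-1}$ and every $t \ge n$ (actually for all $t \ge 0$, since for $0 \le t \le n-1$ the interpolating polynomial of $z^t$ on $z_{1:n}$ is just $z^t$ itself, and $\psi_t = e_{t+1}$, the $(t+1)$-st standard basis vector, so $\psi_t^\top x_{0:n-1} = x_t$ trivially). On the other hand, by the vectorized form $\xi^{(t+1)} = A_n(f)\xi^{(t)}$ with $\xi^{(t)} = x_{t:t+n-1}$, iterating gives $\xi^{(t)} = A_n(f)^t x_{0:n-1}$, hence $x_t = (\xi^{(t)})_1 = e_1^\top A_n(f)^t x_{0:n-1}$. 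So $\psi_t = (A_n(f)^\top)^t e_1$, and the claimed recurrence $\psi_{t+1} = A_n(f)^\top \psi_t$ is immediate. Since this holds for all $x_{0:n-1} \in \C^n$, the two linear functionals coincide as vectors.

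For the equivalent coordinate form \eqref{eq:poly-rec}, I would simply unpack what $A_n(f)^\top$ does to a coefficient vector. Writing $\psi_t(z) = \sum_{k=0}^{n-1} (\psi_t)_k z^k$ and reading off the companion matrix \eqref{def:companion-matrix}, transposition sends the coefficient vector $(\psi_t)_{0:n-1}$ to a vector whose polynomial is $z \cdot \psi_t(z)$ with the degree-$n$ term $(\psi_t)_{n-1} z^n$ replaced using $z^n \equiv -\sum_{j=0}^{n-1} f_j z^j = z^n - f(z) \pmod{f}$. That is, $\psi_{t+1}(z) = z\psi_t(z) - (\psi_t)_{n-1} f(z)$, and since $(\psi_t)_{n-1} = \psi_t^{(n-1)}(0)/(n-1)!$ is the leading coefficient, this is exactly \eqref{eq:poly-rec}. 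Alternatively — and perhaps cleaner to present — one can derive \eqref{eq:poly-rec} directly: the polynomial $z\psi_t(z)$ has degree $\le n$ and agrees with $z \cdot z^t = z^{t+1}$ at each node $z_j$ (since $\psi_t(z_j) = z_j^t$); subtracting the multiple $c\,f(z)$ of $f$ that kills the leading coefficient, where $c$ is the leading coefficient of $z\psi_t(z)$, i.e. $c = (\psi_t)_{n-1}$, yields a polynomial of degree $\le n-1$ still interpolating $z^{t+1}$ on $z_{1:n}$, which by uniqueness must be $\psi_{t+1}$.

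I do not expect a genuine obstacle here; the only thing to be careful about is the bookkeeping of indices in the companion matrix and the claim that $\psi_t = (A_n(f)^\top)^t e_1$ holds for all $t \ge 0$ and not merely $t \ge n$ (the base cases $t = 0, \dots, n-1$ should be checked against the standard basis vectors, and $t = n$ against Fact 1.1's observation $\psi_n(z) = z^n - f(z)$). The cleanest exposition is probably to prove \eqref{eq:poly-rec} first by the direct interpolation argument (uniqueness of the Lagrange polynomial), and then note that \eqref{eq:poly-rec} is precisely the statement $\psi_{t+1} = A_n(f)^\top \psi_t$ in coordinates, giving the "equivalently" for free.
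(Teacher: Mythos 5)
Your proposal is correct, and the ``cleanest exposition'' you recommend at the end is exactly the paper's proof: one checks that $z\psi_t(z)$ has degree at most $n$, agrees with $z^{t+1}$ on the nodes, and that subtracting $\frac{\psi_t^{(n-1)}(0)}{(n-1)!}f(z)$ (which vanishes on the nodes) cancels the leading term, so by uniqueness of the degree-$\le n-1$ interpolant the result is $\psi_{t+1}$. Your first route---deriving $\psi_t = (A_n(f)^\top)^t \psi_0$ by comparing Fact~\ref{fact:interpolation} with the iterated vector recurrence $\xi^{(t)} = A_n(f)^t\,\xi^{(0)}$ and equating the two linear functionals of $x_{0:n-1}$---is an equally valid alternative that makes the ``adjoint'' remark following the lemma more transparent; the one point requiring care there, which you did address, is that $x_t = \psi_t{}^\top x_{0:n-1}$ must be checked to hold for $0 \le t < n$ as well (where $\psi_t$ is the standard coordinate vector picking out the coefficient of $z^t$), since Fact~\ref{fact:interpolation} is only stated for $t \ge n$.
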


\begin{proof}
It suffices to verify~\eqref{eq:poly-rec}, which is clearly equivalent to~$\psi_{t+1} = A_n(f){}^{\top} \psi_t$; moreover, the case~$t \le n-1$ is obvious.
Now for~$t \ge n$, assume that~$\psi_{t-1} = z^{t-1}$ on~$z_{1:n}$, and~$\deg(\psi_{t-1}) \le n-1$. 
Clearly,~$z\psi_{t-1}(z) = z^{t}$ on~$z_{1:n}$, and has degree at most~$n$. Since~$f(z) = 0$ on~$z_{1:n}$, and~$f \in \cP_n$, polynomial~$\psi_{t}$ in right-hand side of~\eqref{eq:poly-rec} interpolates~$z^{t}g(z)$ on~$z_{1:n}$, and one has~$\deg(\psi_t)\le n-1$. 
\end{proof}

\begin{remark}
{\em In the natural sense, the vector equation~$\psi_{t+1} = A_n(f){}^{\top} \psi_t$ is the adjoint of~\eqref{def:recurrence-vectorized}.
Replacing~$\psi_t(z)$ with polynomials interpolating~$z^t g(z)$ for~$g \in \cP_{n}$, with Lemma~\ref{lem:coherency} preserved, we get the~$n$-dimensional subspace of~$\C^{\N_0 \times [n]}$, adjoint to that of possible trajectories of~\eqref{def:recurrence-vectorized} in~$\C^{\N_0 \times [n]}$.}
\end{remark}

\noindent
By Fact~\ref{fact:interpolation}, bounding~$\sM_{t}(\D(r)^n|\D^n)$ amounts to bounding the sum of absolute coefficients~$\|\psi_{t}\|_1$ for the Lagrange polynomial~$\psi_{t} = L_n[(z_{1:n})^t|z_{1:n}]$ on~$z_{1:n}\in \D(r)^n$, and this can be done  via Lemma~\ref{lem:coherency}.
\begin{proposition}
\label{prop:simple-bound}
For all~$t \in \N_n$ and~$r > 0$, it holds that~$\sM_{t}(\D(r)^n|\D^n) \le (\frac{3n-1}{2}r)^{t-n}\, [(1+r)^n-1]$.
\end{proposition}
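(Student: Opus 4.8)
The plan is to bound $\|\psi_t\|_1$ for $\psi_t = L_n[(z_{1:n})^t|z_{1:n}]$ with $z_{1:n} \in \D(r)^n$, using the recurrence from Lemma~\ref{lem:coherency}, since by Fact~\ref{fact:interpolation} we have $\sM_t(\D(r)^n|\D^n) = \sup_{z_{1:n}\in\D(r)^n}\|\psi_t\|_1$. The base case is $t = n$: by Fact~\ref{fact:interpolation}, $\psi_n(z) = z^n - f(z)$, so $\|\psi_n\|_1 = \sum_{k\in[n]}|f_{k-1}| = \sum_{k\in[n]}\se_k(|z_{1:n}|)\le\sum_{k\in[n]}\se_k(r\ones_n) = (1+r)^n - 1$, recovering Proposition~\ref{prop:time-zero}. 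The goal is then to control how fast $\|\psi_t\|_1$ can grow as $t$ increments.

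The key step is to extract a per-step growth bound from~\eqref{eq:poly-rec}, i.e.\ $\psi_{t+1}(z) = z\psi_t(z) - c_t f(z)$ where $c_t = \psi_t^{(n-1)}(0)/(n-1)!$ is the leading (degree $n-1$) coefficient of $\psi_t$. I would bound $\|\psi_{t+1}\|_1 \le \|z\psi_t(z) \bmod (\text{degree } n-1)\|_1 + |c_t|\,\|f\|_1$, being careful about the fact that $z\psi_t(z)$ has degree $n$ and must be reduced: writing $\psi_t(z) = \sum_{j=0}^{n-1}\psi_{t,j}z^j$, one has $z\psi_t(z) = \psi_{t,n-1}z^n + \sum_{j=1}^{n-1}\psi_{t,j-1}z^j$, and $z^n \equiv -\sum_{k}f_{k-1}z^{k-1}$ on the solution space — but actually the cleaner route is to just read off from~\eqref{eq:poly-rec} directly that $\psi_{t+1,j} = \psi_{t,j-1} - c_t f_j$ for $0\le j\le n-1$ (with $\psi_{t,-1}:=0$), so $\|\psi_{t+1}\|_1 \le \|\psi_t\|_1 + |c_t|\,\|f\|_1 \le \|\psi_t\|_1 + |c_t|\big((1+r)^n-1+r^n\big)$, using $\|f\|_1 = \sum_{k\in[n]}\se_k(|z_{1:n}|) + 1 \le (1+r)^n$. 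Wait — more carefully, $\|f\|_1 = 1 + \sum_{k=1}^n|f_{k-1}| \le 1 + \big((1+r)^n - 1\big) = (1+r)^n$. And $|c_t| = |\psi_{t,n-1}| \le \|\psi_t\|_1$. This gives $\|\psi_{t+1}\|_1 \le \big(1 + (1+r)^n\big)\|\psi_t\|_1$, which is too lossy. To get the claimed constant $\frac{3n-1}{2}r$ I must instead bound $|c_t|$ more sharply: $c_t$ is the leading coefficient of the degree-$(n-1)$ interpolant of $z^t$, which by the divided-difference form~\eqref{def:Lagrange-explicit} equals $\sum_{k\in[n]} z_k^t / \prod_{j\ne k}(z_k - z_j)$ — a \emph{complete homogeneous symmetric polynomial} $\sh_{t-n+1}(z_{1:n})$ — so $|c_t| \le \binom{t}{n-1}r^{t-n+1}$ at worst, but more usefully $|c_t| \le \binom{n-1+(t-n)}{n-1} r^{t-n+1}$; combined with a matching induction hypothesis on $\|\psi_t\|_1$ of the form $(\alpha r)^{t-n}[(1+r)^n-1]$ one should be able to close the induction with $\alpha = \frac{3n-1}{2}$ after checking the inductive inequality $(\alpha r)^{t-n+1}[(1+r)^n-1] \ge (\alpha r)^{t-n}[(1+r)^n-1] + |c_t|(1+r)^n$, i.e.\ $(\alpha r - 1)(\alpha r)^{t-n}[(1+r)^n-1]\ge |c_t|(1+r)^n$; bounding $|c_t|$ in terms of $(\alpha r)^{t-n}\cdot r$ via $|c_t|\le \binom{t-1}{n-1}r^{t-n+1}$ and a crude $\binom{t-1}{n-1}\le \big(\frac{3n-1}{2}\big)^{t-n}$-type estimate (or, more likely, pushing the combinatorial factor into the induction itself) is where the arithmetic lives.

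The main obstacle is making the constant come out to exactly $\frac{3n-1}{2}$ rather than something worse: the naive approach of separately bounding $\|\psi_t\|_1$ and $|c_t|$ double-counts, and one needs a joint induction — I would track the pair $\big(\|\psi_t\|_1,\ |c_t|\big)$, or equivalently prove the stronger statement that \emph{all} coefficients $\psi_{t,j}$ are bounded (in absolute value) by the corresponding coefficients one gets from the extremal configuration $z_{1:n} = r\ones_n$ up to the $(\frac{3n-1}{2}r)^{t-n}$ factor — and use the recurrence $\psi_{t+1,j} = \psi_{t,j-1} - c_t f_j$ to propagate both simultaneously. The factor $\frac{3n-1}{2}$ presumably arises as $1$ (from the $z\psi_t$ shift) plus $\frac{3n-3}{2}$-ish from a worst-case ratio $\|f\|_1 |c_t| / (r\|\psi_t\|_1)$; pinning down that ratio via the bounds $|c_t| = |\sh_{t-n+1}(z_{1:n})| \le \binom{t}{n-1}r^{t-n+1}$ and $\|\psi_t\|_1 \ge$ (something) is the delicate part. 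An alternative, possibly cleaner, route: iterate~\eqref{eq:poly-rec} to write $\psi_t = A_n(f)^{\top(t-n)}\psi_n$ and bound $\|A_n(f)^\top\|_{1,1} = \|A_n(f)\|_{\infty,\infty} = \max\{1, \|f_{0:n-1}\|_1\} \le \max\{1,(1+r)^n - 1\}$; this gives $\|\psi_t\|_1 \le ((1+r)^n-1)^{t-n+1}$, again too lossy, so the coefficient-level induction genuinely seems necessary. I would budget most of the proof's length for verifying the inductive inequality with the specific constant, and state the intermediate coefficient bounds as a sub-lemma if they get unwieldy.
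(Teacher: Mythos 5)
Your coefficient-level induction (the version you float at the end) is precisely the paper's proof, and the arithmetic you defer to verification does close — but the proposal never states the induction hypothesis, and your other routes would not work. The statement to carry by induction on $t$ is $\frac{|\psi_t^{(k)}(0)|}{k!}\le\bigl(\frac{3n-1}{2}\bigr)^{t-n}\binom{n}{k}r^{t-k}$ for $0\le k\le n-1$, which sums to the claim since $\sum_{k=0}^{n-1}\binom{n}{k}r^{n-k}=(1+r)^n-1$. The step you flag as ``where the arithmetic lives'' is this: bound $|c_t|$ \emph{from the hypothesis itself} at $k=n-1$, so $|c_t|\le\bigl(\frac{3n-1}{2}\bigr)^{t-n}n\,r^{t-n+1}$, rather than from an unconditional estimate on $\sh_{t-n+1}$. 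Then the coefficient recurrence $\psi_{t+1,k}=\psi_{t,k-1}-c_tf_k$ with $|f_k|\le\se_{n-k}(r\ones_n)=\binom{n}{k}r^{n-k}$ gives $\frac{|\psi_{t+1}^{(k)}(0)|}{k!}\le\bigl(\frac{3n-1}{2}\bigr)^{t-n}\bigl[n\binom{n}{k}+\binom{n}{k-1}\bigr]r^{t-k+1}$, and the constant is determined by $n\binom{n}{k}+\binom{n}{k-1}=\bigl(n+\frac{k}{n-k+1}\bigr)\binom{n}{k}\le\frac{3n-1}{2}\binom{n}{k}$, the ratio $\frac{k}{n-k+1}$ peaking at $\frac{n-1}{2}$ for $k=n-1$. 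The $\binom{n}{k}r^{t-k}$ shape, with an $r$-power that grows in $t$ \emph{separately for each slot $k$}, is essential.

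Two of your alternatives fail precisely because they lose that per-slot grading in $r$. An $\ell^1$-level induction $\|\psi_{t+1}\|_1\le\|\psi_t\|_1+|c_t|\,\|f\|_1$ cannot produce a per-step bound that \emph{decreases} — its right-hand side is at least $\|\psi_t\|_1$ — yet for $\frac{3n-1}{2}r<1$ the target $\bigl(\frac{3n-1}{2}r\bigr)^{t-n}\bigl[(1+r)^n-1\bigr]$ is strictly decreasing in $t$, so the inductive inequality is unattainable no matter how sharply you bound $|c_t|$. The coefficient-level hypothesis avoids this: the term $\psi_{t,k-1}$, which shifts into slot $k$, already carries $r^{t-k+1}$ by the hypothesis, so the needed extra power of $r$ at time $t+1$ is supplied automatically. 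Likewise, tracking only the pair $\bigl(\|\psi_t\|_1,|c_t|\bigr)$ is \emph{not} equivalent to tracking all coefficients, contrary to what you write: to propagate $c_{t+1}=\psi_{t,n-2}-c_tf_{n-1}$ you need the individual entry $\psi_{t,n-2}$, which neither $\|\psi_t\|_1$ nor $|c_t|$ supplies.
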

\begin{proof}
Index~$k$ will correspond to~$n-k$ in the proof of Proposition~\ref{prop:repeated-roots-explicit}; this allows to ease the notation in both cases.
Proceeding by induction, we prove that, for the individual coefficients of~$\psi_{t}$,
\begin{equation}
\label{eq:simple-bound-individual-coeffs}
\frac{|\psi_{t}^{(k)}(0)|}{k!} \le \left(\frac{3n-1}{2}\right)^{t-n} \binom{n}{k} r^{t-k}, \quad 0 \le k \le n-1.
\end{equation}
The case~$t = n$ can be recovered from the proof of Proposition~\ref{prop:time-zero}. 
Next, by invoking Lemma~\ref{lem:coherency},
\begin{equation}
\label{eq:simple-bound-triangle}
\frac{|\psi_{t+1}^{(k)}(0)|}{k!} 
\le 
\left| \frac{\psi_{t}^{(n-1)}(0)}{(n-1)!} \cdot \frac{f^{(k)}_{\vphantom{t}}(0)}{k!} - \frac{\psi_{t}^{(k-1)}(0)}{(k-1)!} \right|
\le \frac{|\psi_{t}^{(n-1)}(0)|}{(n-1)!} \se_{n-k}(r\ones_{n}) + \frac{|\psi_{t}^{(k-1)}(0)|}{(k-1)!}, \quad k > 0;
\end{equation}
ditto without the second term in the case of~$k = 0$.
Whence, by plugging this in~\eqref{eq:simple-bound-individual-coeffs}, we arrive at
\[
\begin{aligned}
\frac{|\psi_{t+1}^{(k)}(0)|}{k!} 
\le \left( \frac{3n-1}{2} \right)^{t-n} F_{n,k}\; r^{t-k+1} \quad\text{with}\quad F_{n,k} := n \binom{n}{k} + \binom{n}{k-1}.
\end{aligned}
\]
To complete the induction step, it remains to observe that~$F_{n,k} =  \big( n + \frac{k}{n-k+1}\big) \binom{n}{k} \le \frac{3n-1}{2}\binom{n}{k}$.
\end{proof}

\subsection{Discussion}
The bound obtained in Proposition~\ref{prop:simple-bound} is dramatically worse than the estimate
\begin{equation}
\label{eq:vanilla-bound}
\sM_t(\D(r)^n|\D^n) \le \binom{t}{n} r^{t-n} [(1+r)^n-1],
\end{equation}
following from Proposition~\ref{prop:repeated-roots-explicit} and Theorem~\ref{th:main-result}. 
In particular, in the asymptotic regime~$t \to + \infty$ with~$n/t \to 0$, the binary logarithm of the ratio of these two estimates grows as~$(1+o(1)) \, t \log_2(\frac{3n-1}{4})$.
In fact, the Cauchy bound (Proposition~\ref{prop:monomial-interpolation-0}), after Fourier analysis, gives a better estimate for~$r = 1$,
\[
\sM_t(\D^n|\D^n) \le \sqrt{n} \binom{t}{n}2^n + \sqrt{n},
\] 
see~Corollary~\ref{cor:Fourier-bound}; however, it seems nontrivial to generalize it for~$r < 1$ in a clean way.
Moreover, in view of~\cite[Proposition~2.1]{ostrovskii2024near}, it is conceivable that the extraneous~$\sqrt{n}$ factor in the latter bound can be replaced with a constant~$c > 1$ via purely analytical techniques, avoiding the tools we used to prove Theorem~\ref{th:main-result}. 
In any case, improvements of such sort are outside of the scope of this work.

Meanwhile, replacing the factor~$r^{t-n}[(1+r)^n-1]$ in~\eqref{eq:vanilla-bound} with the correct one~$\int_{0}^r n u^{t-n}(1 + u)^{n-1} du$, and thus proving Theorem~\ref{th:main-result}, is a delicate task if viewed through the analysis lens.
Indeed, the proof of Proposition~\ref{prop:simple-bound} suggests that the looseness of~\eqref{eq:simple-bound-individual-coeffs} is due to the triangle inequality we employed in~\eqref{eq:simple-bound-triangle}. 
If we instead {\em speculate} that the two terms in~\eqref{eq:simple-bound-triangle} have {\em opposite} signs, as suggested by~\eqref{eq:poly-rec}, the corresponding adjustment of the subsequent calculation in~\eqref{eq:simple-bound-triangle} would result in the correct bound\vspace{-0.1cm}
\begin{equation}
\label{eq:correct-bound-individual-coeffs}
\frac{|\psi_{t}^{(k)}(0)|}{k!} \le \binom{t}{n} \binom{n}{k} \frac{n-k}{t-k}r^{t-k}, \quad 0 \le k \le n-1
\end{equation}
for the individual coefficients of~$\psi_{t}(z)$. Indeed, the corresponding modification of~\eqref{eq:simple-bound-triangle} implies that
\[
\begin{aligned}
\frac{|\psi_{t+1}^{(k)}(0)|}{k!} 
&\le \binom{t}{n} F_{t,n,k}\; r^{t-k+1}  \quad \text{with} \quad F_{t,n,k} := \binom{n}{k} \frac{n}{t-n+1} - \binom{n}{k-1} \frac{n-k+1}{t-k+1}.
\end{aligned}
\]
By simple algebra, we get~$\binom{t}{n} F_{t,n,k} = \binom{t}{n} \binom{n}{k} ( \frac{n}{t-n+1} - \frac{k}{t-k+1} ) = \binom{t+1}{n} \binom{n}{k} \frac{n-k}{t-k+1}$, so~\eqref{eq:correct-bound-individual-coeffs} holds for~$t + 1$.
This calculation elucidates that in order to prove Theorem~\ref{th:main-result}, it is crucial to control the relative phases of the terms that comprise the interpolation polynomial in~\eqref{eq:poly-rec}. 
The latter task does not trivialize even if we restrict~$z_{1:n}$ to be real, to deal with signs instead of phases: as we discuss in Section~\ref{sec:shadrin}, showing such sign alternation properties for real interpolation polynomials is known to be challenging~\cite{shadrin1995error}.
Establishing the relation of this challenge to Schur positivity is our key insight.

Next, we give a concise review of symmetric functions and partitions, needed to prove Theorem~\ref{th:main-result}.

\section{Background on symmetric functions}
\label{sec:partitions}
This section covers our subsequent needs. 
More can be found in~\cite[Chap.~7]{stanley2023volume2}, \cite{macdonald1998symmetric},~\cite{tamvakis2012theory}, and~\cite{prasad2018introduction}.\vspace{-0.1cm}
%
%
\subsubsection*{Partitions}
Any nonincreasing sequence~$\lambda = (\lambda_1, \lambda_2, \dots)$ with~$\lambda_k \in \Z_+$ is called {\em a partition}, and~$\Par$ denotes the set of all partitions.
The nonzero entries in~$\lambda \in \Par$ are {\em parts}; its {\em length}~$\ell(\lambda)$ is the number of parts ($\lambda_i = 0$ is often omitted).
Its {\em size}~$|\lambda|$ is the sum of parts;~$\lambda \vdash d$ is the same as~$|\lambda| = d$.
For~$n, d \in \Z_+$, we define the sets~$\Par(d) := \{\lambda \in \Par: |\lambda| = d\}$ and~$\Par_n(d) = \{\lambda \in \Par(d): \ell(\lambda) = n\}$. 
The {\em Young diagram} of~$\lambda$ is the left-justified two-dimensional array of cells with~$\lambda_i$ cells in each row~$i $.
For~$\lambda,\mu \in \Par$, we write~$\lambda \contains \mu$ if~$\lambda - \mu$ is a nonnegative vector, i.e.~the Young diagram of~$\lambda$ contains that of~$\mu$.
The {\em conjugate} partition of~$\lambda$, denoted with~$\lambda'$, is obtained by transposing the Young diagram of~$\lambda$.
A {\em hook} partition has the form~$(a+1,1^b)$ with~$a,b \ge 0$, where~$1^b$ is~$1$ repeated~$b$ times; its Young diagram is composed of a single row and a single column.
In the {Frobenius notation}, this partition writes as~$(a|b)$, and its conjugate is~$(b|a)$.
Any~$\lambda \in \Par$ can be decomposed into hooks by iteratively removing the boundary hook; 
this allows to write any finite partition as~$(a_{1:s}\,|\,b_{1:s})$, where~$(a_1|b_1),\dots, (a_s|b_s)$ is the sequence of hooks removed. 
For example,~$(4,3,3,1) = (3,1,0\,|\,3,1,0)$.\vspace{-0.1cm}


\subsubsection*{Symmetric functions}
\label{sec:symmetric-functions}
We let~$\sLambda$ be the algebra of symmetric functions in countably many indeterminates~$x_1, x_2, \dots$  with rational coefficients; its elements are formal power series in~$x_1, x_2, \dots$ with rational coefficients, invariant under any permutation of the variables.
Special elements of~$\sLambda$ are {elementary} symmetric functions~$\se_d$ and~{complete homogeneous} symmetric functions~$\sh_d$, as given by
\begin{equation}
\label{def:elem-comp}
\begin{aligned}
\se_d := \sum_{i_1 < \,\dots\, < i_d} \prod_{k \in [d]} x_{i_k},
\quad\quad
\sh_d := \sum_{i_1 \le \,\dots\, \le i_d} \prod_{k \in [d]} x_{i_k}
\end{aligned}
\end{equation}
for~$d \in \N$, with~$\se_0 = \sh_0 = 1$ and~$\se_d = \sh_d = 0$ for~$d < 0$.
We let~$\sLambda^d$ be the set of homogeneous symmetric functions {\em of degree~$d$} in~$\sLambda$, and we let~$\sLambda^{\le d} := \bigoplus_{k = 0}^d \sLambda^k$; in particular,~$\se_d, \sh_d \in \sLambda^d$ and~$\sum_{k \le d} \se_k \in \sLambda^{\le d}$.
For~$\sg \in \sLambda^d$ and~$z_{1:n} \in \C^n$, we denote with~$\sg(z_1,\dots,z_n)$ the {\em specialization} of~$\sg$ to the variables~$x_k = z_k \ones_{k \le n}$,~$k \in \N$; thus,~$\sg(z_1, \dots, z_n)$ is a symmetric polynomial of degree~$d$ on~$\C^n$. 
%
%

%

\subsubsection*{Monomial basis and~$\sm$-positivity}
Any~$\lambda \in \Par$ specifies the {\em symmetric monomial}
$
\sm_{\lambda} \in \sLambda,
$
obtained by applying all possible permutations~$(x_1, x_2, \dots) \mapsto (x_{\sigma(1)}, x_{\sigma(2)}, \dots)$ of variables to the monomial~$\prod_{i \in \N} x_{i}{}^{\lambda_i}$ and summing the resulting {\em distinct} monomials. 
The set of all symmetric monomials, with the partial ordering inherited from~$\Par$, is a basis of~$\sLambda$ as a vector space (over~$\Q$), and symmetric monomials of degree~$d$ similarly form a basis for~$\sLambda^d$. 
In other words, any~$\sg \in \sLambda$ can be uniquely expressed as~$\sum_{\lambda \in \Par} c_{\lambda} \sm_{\lambda}$ with~$c_\lambda \in \Q$, and with~$c_{\lambda} \ne 0$ only for~$\lambda \in \Par(d)$ when~$\sg \in \sLambda^d$. 
Furthermore,~$\sg \in \sLambda$ is called~{\em $\sm$-positive} if its expansion in the~$\sm$-basis has~$c_{\lambda} \ge 0$ for all partitions.
The importance of this concept for our purposes is suggested by the following immediate observation.
\begin{fact}
\label{fact:monomial-maximization}
If~$\sg \in \sLambda^{\le d}$ is~$\sm$-positive, the maximum of~$|\sg(z_{1:n})|$ over~$\D_n(r_{1:n})$ is attained at~$z_{1:n} = r_{1:n}$. 
If one has~$\sg \in \sLambda^{d}$ as well, then the same maximum is attained at~$z_{1:n} = e^{i\theta} r_{1:n}$ with arbitrary~$\theta \in \R$. 
\end{fact}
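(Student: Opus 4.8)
The plan is to prove Fact~\ref{fact:monomial-maximization} directly from the definition of $\sm$-positivity and the triangle inequality, mirroring the mechanism already used for elementary symmetric polynomials in the proof of Proposition~\ref{prop:time-zero} (cf.~\eqref{eq:cophase-easy}). The statement is genuinely elementary; the ``work'' is just bookkeeping with the monomial expansion.

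First I would write $\sg = \sum_{\lambda \in \Par} c_\lambda \sm_\lambda$ with $c_\lambda \ge 0$ and only finitely many nonzero (all with $|\lambda| \le d$, and all with $\ell(\lambda) \le n$ surviving the specialization to $n$ variables). Specializing, $\sg(z_{1:n}) = \sum_\lambda c_\lambda \sm_\lambda(z_{1:n})$, where each $\sm_\lambda(z_{1:n})$ is a sum of distinct monomials $\prod_k z_k^{\alpha_k}$ with the exponent tuple $\alpha$ a permutation of $(\lambda_1,\dots,\lambda_n)$ (zeros padded). For $z_{1:n} \in \D_n(r_{1:n})$, that is $|z_k| \le r_k$, the triangle inequality gives
\[
|\sg(z_{1:n})| \le \sum_\lambda c_\lambda \sum_{\alpha} \prod_{k \in [n]} |z_k|^{\alpha_k} \le \sum_\lambda c_\lambda \sum_{\alpha} \prod_{k \in [n]} r_k^{\alpha_k} = \sum_\lambda c_\lambda \sm_\lambda(r_{1:n}) = \sg(r_{1:n}),
\]
using $c_\lambda \ge 0$ for both inequalities and the fact that $r_k \ge 0$ makes each $\sm_\lambda(r_{1:n})$ a sum of nonnegative terms. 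Since $r_{1:n} \in \D_n(r_{1:n})$, this bound is attained there, proving the first claim.

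For the second claim, suppose additionally $\sg \in \sLambda^d$, so $c_\lambda = 0$ unless $|\lambda| = d$. Then every monomial appearing in $\sg(z_{1:n})$ has total degree exactly $d$, i.e.~$\sum_k \alpha_k = d$ for each exponent tuple $\alpha$ occurring. Evaluating at $z_{1:n} = e^{i\theta} r_{1:n}$ pulls out a common factor $e^{i d \theta}$ from every monomial, hence $\sg(e^{i\theta} r_{1:n}) = e^{i d\theta} \sg(r_{1:n})$, so $|\sg(e^{i\theta} r_{1:n})| = |\sg(r_{1:n})| = \max_{\D_n(r_{1:n})} |\sg|$. This completes the proof.

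There is no real obstacle here — the only point requiring a word of care is the interchange of the (finite) $\lambda$-sum with the triangle inequality, which is legitimate precisely because $\sg \in \sLambda^{\le d}$ forces the $\sm$-expansion to be a finite sum; and the observation that specialization to $n$ variables sends $\sm_\lambda$ to $0$ when $\ell(\lambda) > n$, so those terms can be harmlessly dropped. I would state the argument in two or three sentences rather than displaying all of the above, since it is a ``Fact'' and the surrounding text already primes the reader via the $\se_{n-k+1}$ computation in Proposition~\ref{prop:time-zero}.
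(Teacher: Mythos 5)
Your proof is correct and is exactly the argument the paper has in mind: the paper presents this as an ``immediate observation'' with no written proof, and the triangle-inequality-on-the-monomial-expansion argument together with the degree-$d$ phase factoring is the one natural way to see it.
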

%

Starting with~\eqref{def:elem-comp}, we construct symmetric functions~$\se_{\lambda} = \prod_{i \in \N} \se_{\lambda_i}$ and~$\sh_{\lambda} = \prod_{i \in \N} \sh_{\lambda_i}$. 
These turn out to give two other bases of~$\sLambda$---{\em$\se$-basis} and~{\em$\sh$-basis}---that are mutually conjugate, i.e.~$\se_{\lambda} = \sh_{\lambda'}$.
Clearly,~$\se_{\lambda}$ and~$\sh_{\lambda}$ are~$\sm$-positive, so the positivity in either of the two bases implies~$\sm$-positivity. 
In general, for two bases~$\mathsf{u},\mathsf{v}$ of~$\sLambda$, we say that~$\mathsf{v}$ is~{\em$\mathsf{u}$-positive} if each~$\mathsf{v}_{\lambda} \in \mathsf{v}$ expands positively in~$\mathsf{u}$.
To show that~$\sg \in \sLambda$ is~$\sm$-positive, it suffices to show its positivity in some auxiliary~$\sm$-positive basis.



\subsubsection*{Schur functions and~$\sch$-positivity.}
One special basis of~$\sLambda$ is comprised of the {\em Schur functions}~$\sch_{\lambda}$. 
These functions, first studied by Cauchy~\cite{cauchy1815memoire}, admit several equivalent definitions, but none of these is as tangible as for~$\sm_{\lambda}$,~$\se_{\lambda}$ or~$\sh_{\lambda}$; 
we choose the definition that immediately shows their~$\sm$-positivity.

A {\em semi-standard Young tableau} (SSYT) with {\em shape}~$\lambda \in \Par$ is a two-dimensional array~$T$ that fills the cells of the Young diagram of~$\lambda$ with positive integers, such that the entries (a) srtictly increase in each column; (b) do not decrease in each row. We say that~$T$ has {\em type}~$\alpha$, where~$\alpha = (\alpha_1, \alpha_2, \dots)$ with~$\alpha_k \in \Z_+$ for all~$k \in \N$, if~$\alpha_k$ is the the number of occurences of~$k$ in~$T$. 
{\em Schur function~$\sch_{\lambda}$} reads
\begin{equation}
\label{def:schur-via-monomials}
\sch_{\lambda} = \sum_{\mu \in \Par} K_{\lambda \mu} \sm_{\mu}
\end{equation}
where the {\em Kostka number}~$K_{\lambda\mu}$ is the number of SSYTs of shape~$\lambda$ and type~$\mu$, and~$\mu \notin \Par(|\lambda|)$ can be dropped.
From this definition, we see that~$\sch_{\lambda}$ is~$\sm$-positive. 
In fact, there is an indexing of Kostka numbers over~$\Par \times \Par$, such that the infinite matrix~$(K_{\lambda\mu})_{\lambda, \mu \in \Par}$ is lower-unitriangular,
and is the Cholestky factor of the transition matrix from~$\sm$-basis to~$\sh$-basis (\cite[Cor.~7.12.3]{stanley2023volume2}); thus,
\begin{equation*}
\sh_{\mu} = \sum_{\lambda \in \Par} K_{\lambda \mu} \sch_{\lambda}. 
\end{equation*}
Hence,~$\sch$-positivity is weaker than~$\sh$-positivity (or~$\se$-positivity, by conjugating~$\mu$ in the last equation) but still implies~$\sm$-positivity, so the latter can be established by studying~$\sg \in \sLambda$ in the Schur basis.\footnote{This methodology bears a striking similarity to the Burer-Monteiro relaxation in semidefinite programming~\cite{burer2003nonlinear,burer2005local}. 
This is hardly a conincidence; we believe that investigating the tentative connection is a worthwhile research direction.}

\subsubsection*{Schur functions for hook partitions.}
From~\eqref{def:schur-via-monomials}, we obtain~$\mathsf{s}_{(a)} = \sh_{a}$ and~$\mathsf{s}_{(1^b)} = \se_{b}$. 
More generally, the {\em Pieri rule} (see e.g.~\cite[Thm.~7.5.17]{stanley2023volume2}) gives the Schur expansion of the product
$
\sh_{a} \sch_\lambda  = \sum_{\mu \,\in\, \textsf{Add}_{a}(\lambda)} \sch_\mu
$
where~$\textsf{Add}_{a}(\lambda)$ is the set of partitions obtained by adding~$a$ cells to the Young diagram of~$\lambda$, {\em no two in the same column}.
For~$\lambda = (1^b)$, this set consists of only two hook partitions, and we conclude that
\begin{equation}
\label{eq:hook-derivative}
\sh_{a} \se_{b} = \sch_{(a|b-1)} + \sch_{(a-1|b)}.
\end{equation}
A telescoping argument then gives explicit, bilinear in~$\sh,\se$ formulae for hook-shaped Schur functions:
\begin{fact}
\label{fact:hook-bilinear}
For any~$a,b \in \Z_+$, one has~$\sch_{(a|b)} = \sum_{j = 0}^a (-1)^{j} \sh_{a-j} \se_{b+j+1} = \sum_{k = 0}^b (-1)^{k} \sh_{a+k+1} \se_{b-k}$. 
Note that in both expressions, we might w.l.o.g.~replace the range of summation with the entire~$\Z_+$.
\end{fact}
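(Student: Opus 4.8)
The plan is to derive the two bilinear formulas for $\sch_{(a|b)}$ by iterating the Pieri-type identity \eqref{eq:hook-derivative}, namely $\sh_{a}\se_{b} = \sch_{(a|b-1)} + \sch_{(a-1|b)}$, and then to cross-check the two resulting expressions for consistency. I would work with the first identity, $\sch_{(a|b)} = \sum_{j=0}^{a} (-1)^{j}\sh_{a-j}\se_{b+j+1}$, and obtain the second by the symmetry $\se \leftrightarrow \sh$, $\lambda \leftrightarrow \lambda'$ together with $(a|b)' = (b|a)$.

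The main computation is a telescoping induction on $a$. For the base case $a=0$, identity \eqref{eq:hook-derivative} with $a=0$ reads $\sh_{0}\se_{b} = \sch_{(0|b-1)} + \sch_{(-1|b)}$; since $\sh_{0}=1$ and (adopting the convention that a Schur function indexed by a partition with a negative Frobenius coordinate vanishes) $\sch_{(-1|b)}=0$, this gives $\sch_{(0|b)} = \se_{b+1} = \sum_{j=0}^{0}(-1)^{j}\sh_{-j}\se_{b+j+1}$, matching the claim. Equivalently one can just invoke $\mathsf{s}_{(1^{b+1})}=\se_{b+1}$, which is exactly $\sch_{(0|b)}$ in Frobenius notation. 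For the inductive step, rearrange \eqref{eq:hook-derivative} as
\[
\sch_{(a|b)} = \sh_{a}\se_{b+1} - \sch_{(a-1|b+1)},
\]
apply the induction hypothesis to $\sch_{(a-1|b+1)} = \sum_{j=0}^{a-1}(-1)^{j}\sh_{a-1-j}\se_{b+1+j+1}$, and reindex $j \mapsto j-1$ in the sum to see that $-\sch_{(a-1|b+1)} = \sum_{j=1}^{a}(-1)^{j}\sh_{a-j}\se_{b+j+1}$; adding the $j=0$ term $\sh_{a}\se_{b+1}$ yields precisely $\sum_{j=0}^{a}(-1)^{j}\sh_{a-j}\se_{b+j+1}$. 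Since $\sh_{k}=0$ for $k<0$, extending the summation range to all of $\Z_{+}$ changes nothing, which justifies the final remark.

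The step I expect to require the most care is not the algebra but pinning down the boundary conventions so that the telescoping is legitimate: one must be consistent about what $\sch_{(a|b)}$ means when $a$ or $b$ is negative (it should be $0$), and one must confirm that \eqref{eq:hook-derivative}, which was stated for $\lambda=(1^{b})$ via the Pieri rule, remains valid with these conventions at the degenerate ends (e.g.\ $b=0$, where $\se_{0}=1$ and $(a|-1)$ should be read as the non-hook partition $(a)$ with $\sch_{(a)}=\sh_{a}$, so that \eqref{eq:hook-derivative} becomes $\sh_{a} = \sch_{(a|0)} + \sch_{(a-1)}$, i.e.\ $\sch_{(a|0)} = \sh_{a} - \sh_{a-1}$, consistent with the formula). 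Once these edge cases are checked, everything else is the routine reindexing above, and the second formula follows from the first by conjugation since conjugating a hook swaps its two Frobenius coordinates and exchanges the roles of $\se$ and $\sh$.
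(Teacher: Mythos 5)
Your telescoping induction on $a$, starting from the Pieri‐rule recursion $\sh_a\se_b = \sch_{(a|b-1)} + \sch_{(a-1|b)}$ of \eqref{eq:hook-derivative}, is exactly the argument the paper has in mind (it only says ``a telescoping argument then gives\ldots''), and the derivation of the second expression from the first via the involution $\omega$ and $(a|b)'=(b|a)$ is also standard. The main argument is correct: the base case $a=0$ gives $\sch_{(0|b)}=\sch_{(1^{b+1})}=\se_{b+1}$, and the inductive step only invokes \eqref{eq:hook-derivative} at parameters $(a,b+1)$ with $a\ge 1$, $b+1\ge 1$, so the degenerate cases you worry about never actually arise.

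That said, the parenthetical aside about the $b=0$ boundary is confused and wrong. You first (correctly) set $\sch_{(a|b)}=0$ when a Frobenius coordinate is negative, then contradict this by reading $(a|-1)$ as the partition $(a)$, and finally assert $\sch_{(a|0)}=\sh_a-\sh_{a-1}$. This is false: $(a|0)$ is the one‐row partition $(a+1)$, so $\sch_{(a|0)}=\sh_{a+1}$, which is also what your own formula gives at $b=0$ via the telescoping identity $\sum_{j=0}^a(-1)^j\sh_{a-j}\se_{j+1}=\sh_{a+1}$. Plugging $b=0$ into \eqref{eq:hook-derivative} yields $\sh_a=\sch_{(a|-1)}+\sch_{(a-1|0)}=\sch_{(a|-1)}+\sh_a$, which forces $\sch_{(a|-1)}=0$, consistent with your first convention. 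Since the induction never touches this boundary, the slip is harmless to the proof, but you should either delete the parenthetical or correct it.
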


%
\subsubsection*{Hook-content formula}
\label{sec:hook-content-formula}
For~$\lambda \in \Par$ with~$|\lambda| \le n$, the quantity~$\sch_{\lambda}(\ones_n) = \sch_{\lambda}(x_{1:n})|_{x_{1:n} = (1,\dots, 1)}$ is the number of SSYTs with shape~$\lambda$ and entries at most~$n$. 
In fact, it admits the explicit expression
\begin{equation}
\label{sec:hook-content-formula-general}
\sch_{\lambda}(\ones_n) = \prod_{j \,\in\, [\ell(\lambda)]} \, \prod_{k \,\in\, [\lambda_j]}\frac{n + k - j}{w_{\lambda}(j,k)+1} \quad \text{with}\quad w_{\lambda}(j,k) := \lambda_j^{\vphantom'} - j + \lambda_{k}' - k,
\end{equation}
known as the {\em hook-content formula}~(e.g.~\cite[Cor.~7.21.4]{stanley2023volume2}).
Here, each factor corresponds to a specific cell~$(j,k)$ of the Young diagram;~$w_{\lambda}(j,k)$ is the number of cells to the right or below~$(j,k)$. 
In the case of hook partitions, after some simple manipulations with binomial coefficients we conclude that
\begin{equation}
\label{eq:hook-content-formula}
\sch_{(a|b)}(\ones_n) 
= \frac{n}{a+b+1} \Bigg( \frac{1}{a!} \prod_{j \in [a]} (n+j) \Bigg) \Bigg( \frac{1}{b!} \prod_{k \in [b]} (n-k) \Bigg)
= \binom{n+a}{a+b+1} \binom{a+b}{b}.
\end{equation}
In particular, we have invariance under transposition of the hook, namely~$\sch_{(a|b)}(\ones_{N-a}) = \sch_{(b|a)}(\ones_{N-b})$.
\section{Proof of Theorem~\ref{th:main-result}}
\label{sec:result}

The first step of the proof is an estimate for the error of Lagrange interpolation of a monomial; 
for convenience, we present it as a separate proposition. This estimate relies on the classical observation:
\begin{lemma}[{\cite[III.8.4]{freud1971book}}]
\label{lem:integral-representation}
Let~$g$ be holomorphic on~$\D(r)$,~$z_{1:n}$  simple,~$\|z_{1:n}\|_{\infty} < r$, and~$f = P_n[z_{1:n}]$.
For~$z \in \C: |z| < r$, the Lagrange polynomial~$\hat g = L_n[g_{1:n}|z_{1:n}]$ with~$g_{1:n} := (g(z_1), \dots, g(z_n))$ satisfies
\begin{equation}
\label{eq:integral-representation}
g(z) - \hat g(z) = \frac{f(z)}{2\pi i} \oint_{\T} \frac{g(\xi)}{f(\xi) (\xi - z)} d \xi.
\end{equation}
\end{lemma}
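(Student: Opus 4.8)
The plan is to derive \eqref{eq:integral-representation} from the Cauchy integral formula together with one residue computation that reconstructs the barycentric (partial-fraction) form of the Lagrange polynomial. Throughout we may assume $z \notin \{z_1,\dots,z_n\}$, since otherwise $f(z) = 0$ and both sides of \eqref{eq:integral-representation} vanish; and we may understand $\T$ in \eqref{eq:integral-representation} as (equivalently, may replace it by, thanks to Cauchy's theorem) any positively oriented simple closed contour lying in $\D(r)$ that winds once around $z$ and around each node $z_j$ — such a contour exists because $\|z_{1:n}\|_\infty < r$ and $|z| < r$, and in the intended applications $g$ is a monomial, hence entire, so the domain of holomorphy poses no constraint.

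First I would record two elementary facts about $f = P_n[z_{1:n}]$. Since $z_{1:n}$ is simple, $f$ has $n$ simple zeros and $f'(z_j) = \prod_{k \in [n]\setminus j}(z_j - z_k)$. Consequently, rewriting $\prod_{k \in [n]\setminus j}(\zeta - z_k) = f(\zeta)/(\zeta - z_j)$ in \eqref{def:Lagrange-explicit}, the Lagrange polynomial admits the partial-fraction form
\[
\hat g(z) \;=\; \sum_{j \in [n]} g(z_j)\,\frac{f(z)}{(z - z_j)\, f'(z_j)}.
\]

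The core step is to evaluate $I := \dfrac{1}{2\pi i}\displaystyle\oint_{\T}\dfrac{g(\xi)}{f(\xi)}\cdot\dfrac{f(\xi) - f(z)}{\xi - z}\,d\xi$ in two ways. On one hand, since $f(\xi) - f(z)$ vanishes at $\xi = z$, the factor $\tfrac{f(\xi)-f(z)}{\xi - z}$ is a polynomial in $\xi$, so the integrand of $I$ is holomorphic on and inside $\T$ except for simple poles exactly at $\xi = z_1,\dots,z_n$; by the residue theorem and $f(z_j) = 0$,
\[
I \;=\; \sum_{j \in [n]} \mathrm{Res}_{\xi = z_j}\!\left[\frac{g(\xi)\,(f(\xi) - f(z))}{f(\xi)(\xi - z)}\right] \;=\; \sum_{j \in [n]} \frac{g(z_j)\,(-f(z))}{f'(z_j)(z_j - z)} \;=\; \sum_{j \in [n]} \frac{g(z_j)\, f(z)}{f'(z_j)(z - z_j)} \;=\; \hat g(z),
\]
the last equality being the partial-fraction form above. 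On the other hand, splitting $\tfrac{f(\xi)-f(z)}{\xi - z} = \tfrac{f(\xi)}{\xi - z} - \tfrac{f(z)}{\xi - z}$ and invoking Cauchy's formula $g(z) = \tfrac{1}{2\pi i}\oint_{\T}\tfrac{g(\xi)}{\xi - z}\,d\xi$ gives $I = g(z) - \tfrac{f(z)}{2\pi i}\oint_{\T}\tfrac{g(\xi)}{f(\xi)(\xi - z)}\,d\xi$. Equating the two expressions for $I$ and rearranging yields \eqref{eq:integral-representation}.

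I do not anticipate a genuine obstacle; the only points requiring care are (i) verifying that $\tfrac{f(\xi)-f(z)}{\xi-z}$ is a polynomial in $\xi$ of degree $n-1$, so the integrand of $I$ has no pole at $\xi = z$ and the residue sum runs only over the nodes; (ii) matching the node residues to the barycentric form of $\hat g$; and (iii) the (essentially bookkeeping) justification that the contour may be taken to enclose $z$ and all $z_j$ within $\D(r)$. An equivalent shortcut is to apply the residue theorem directly to $\xi \mapsto \tfrac{g(\xi)}{f(\xi)(\xi-z)}$: its residues at $\xi = z_j$ sum to $-\hat g(z)/f(z)$ (again by the partial-fraction identity), its residue at $\xi = z$ equals $g(z)/f(z)$, and multiplying the total by $f(z)$ gives the claim at once.
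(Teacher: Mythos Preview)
Your proof is correct, and the ``equivalent shortcut'' you sketch in the final paragraph is exactly the paper's argument: it applies the residue theorem directly to~$\xi \mapsto g(\xi)/[f(\xi)(\xi-z)]$, identifies the residue at~$\xi=z$ as~$g(z)/f(z)$ and the sum of residues at the nodes as~$-\hat g(z)/f(z)$ via~\eqref{def:Lagrange-explicit}, and multiplies through by~$f(z)$. Your main route through the auxiliary integral~$I$ is a harmless reorganization of the same computation---you trade the pole at~$\xi=z$ for a separate invocation of Cauchy's formula---so the two arguments are essentially the same.
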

\begin{proof}
Define~$h_z(\zeta) := f(\zeta) (\zeta-z)$. For~$z \notin \{z_1, \dots, z_n\}$, dividing the left-hand side of~\eqref{eq:integral-representation} over~$f(z)$,
\[
\frac{g(z) - \hat g(z)}{f(z)} 
\overset{\eqref{def:Lagrange-explicit}}{=} \lim_{\zeta \to z} \frac{g(\zeta)}{h_z(\zeta)} (\zeta - z) + \sum_{k \in [n]} \lim_{\zeta \to z_k} \frac{g(\zeta)}{h_z(\zeta)} (\zeta - z_k)
= \frac{1}{2\pi i} \oint_{\T} \frac{g(\xi)}{f(\xi) (\xi - z)} d \xi
\]
due to the residue theorem.
Similarly, the right-hand side of~\eqref{eq:integral-representation} vanishes at~$z \in \{z_1, \dots, z_n\}$.
\end{proof}

We remind that~$\se_d,\sh_d$ are, respectively, the elementary and complete homogeneous symmetric {\em functions} of degree~$d$, cf.~\eqref{def:elem-comp};
their specializations to ~$n$ variables~$z_{1:n}$ are denoted with~$\se_d(z_{1:n})$,~$\sh_{d}(z_{1:n})$.

\begin{proposition}
\label{prop:monomial-interpolation-0}
Given any~$z_{1:n} \in \D_n(r_{1:n})$, the Lagrange polynomial~$\psi_{t} = L_n[(z_{1:n})^t|z_{1:n}]$ for the monomial~$z^t$,~$t \in \N_n$, admits the pointwise error bound~(valid for all~$z \in \C$) in terms of~$f = P_n[z_{1:n}]$:
\begin{equation*}
|\psi_{t}(z) - z^t|
\le |f(z)| \sum_{d = 0}^{t-n} \sh_{d}(r_{1:n}) \, |z|^{t-n-d},
\end{equation*}
where the equality is attained with~$z_{1:n} = e^{i\theta}r_{1:n}$ for any~$\theta \in \R$.
In particular, in the case~$r_{1:n} = r\ones_n$,
\begin{equation}
\label{eq:monomial-interpolation-0-uniform}
| \psi_{t}(z) - z^t | \le |f(z)| \sum_{d = 0}^{t-n} {n+d-1 \choose d}  r^{d} |z|^{t-n-d}.
\end{equation}

\end{proposition}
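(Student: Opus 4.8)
The plan is to apply Lemma~\ref{lem:integral-representation} with $g(z) = z^t$ and then carefully extract the coefficient bound, turning the contour integral into a sum of complete homogeneous symmetric polynomials. First I would fix $z_{1:n} \in \D_n(r_{1:n})$, set $f = P_n[z_{1:n}]$, and pick any radius $\rho$ slightly larger than $\max(\|z_{1:n}\|_\infty, |z|)$. Applying the lemma with $g = z^t$ and the circle $\T(\rho)$ in place of $\T$ gives
\[
z^t - \psi_t(z) = \frac{f(z)}{2\pi i} \oint_{\T(\rho)} \frac{\xi^t}{f(\xi)(\xi - z)} \, d\xi.
\]
Since $\deg f = n \le t$ and $|z| < \rho$, the integrand decays like $\xi^{t-n-1}$ at infinity as $\rho \to \infty$; but rather than pushing $\rho \to \infty$, the cleaner route is to expand $1/f(\xi)$ and $1/(\xi - z)$ as Laurent series in $1/\xi$ valid for $|\xi| > \max(\|z_{1:n}\|_\infty, |z|)$ and read off the residue at infinity.

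The key algebraic identity is the generating-function formula $\frac{1}{f(\xi)} = \prod_{j \in [n]} \frac{1}{\xi - z_j} = \xi^{-n} \sum_{d \ge 0} \sh_d(z_{1:n}) \xi^{-d}$, which is exactly the standard generating function for complete homogeneous symmetric functions. Combined with $\frac{1}{\xi - z} = \sum_{m \ge 0} z^m \xi^{-m-1}$, the integrand becomes $\xi^t \cdot \xi^{-n} \sum_{d \ge 0} \sh_d(z_{1:n}) \xi^{-d} \cdot \sum_{m \ge 0} z^m \xi^{-m-1}$, and the contour integral picks out the coefficient of $\xi^{-1}$, namely $\sum_{d+m = t-n} \sh_d(z_{1:n}) z^m = \sum_{d=0}^{t-n} \sh_d(z_{1:n}) z^{t-n-d}$ (the constraint $t \ge n$ ensures this sum is nonempty and finite). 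Hence $z^t - \psi_t(z) = f(z) \sum_{d=0}^{t-n} \sh_d(z_{1:n}) z^{t-n-d}$ as an exact identity, and the bound follows by the triangle inequality together with $|\sh_d(z_{1:n})| \le \sh_d(|z_1|, \dots, |z_n|) \le \sh_d(r_{1:n})$, the first step because $\sh_d$ has nonnegative (indeed, it is $\sm$-positive, cf.\ Fact~\ref{fact:monomial-maximization}) coefficients, the second because $\sh_d$ is monotone in each nonnegative argument. Equality in $|\sh_d(z_{1:n})| = \sh_d(r_{1:n})$ holds simultaneously for all $d$ precisely when $z_{1:n} = e^{i\theta} r_{1:n}$, since then every monomial in $\sh_d$ is a common phase $e^{id\theta}$ times its modulus; one checks the triangle inequality $|z^t - \psi_t(z)| \le |f(z)| \sum_d \sh_d(r_{1:n}) |z|^{t-n-d}$ is then tight when additionally $z$ is aligned appropriately, or one simply notes the statement only claims attainability of the stated envelope over the polydisc, realized at $z_{1:n} = e^{i\theta} r_{1:n}$.

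Finally, the specialization $r_{1:n} = r\ones_n$ uses $\sh_d(r, \dots, r) = r^d \sh_d(\ones_n) = r^d \binom{n+d-1}{d}$, the last equality being the standard count of monomials of degree $d$ in $n$ variables (equivalently $\sh_d(\ones_n)$ is the dimension of the degree-$d$ symmetric power of $\C^n$, or a trivial instance of the hook-content formula), yielding~\eqref{eq:monomial-interpolation-0-uniform}. I do not anticipate a genuine obstacle here; the only point requiring a little care is the justification of the Laurent expansion and termwise integration, which is routine since all series converge absolutely and uniformly on $\T(\rho)$ for $\rho$ large enough, and the resulting polynomial identity in $z$ then holds for all $z \in \C$ by analytic continuation (both sides are polynomials, agreeing on a disc).
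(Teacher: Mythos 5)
Your proof is correct and follows essentially the same route as the paper's: both apply Lemma~\ref{lem:integral-representation} with $g=z^t$, expand $1/f(\xi)$ and $1/(\xi-z)$ as Laurent series, read off the residue, and arrive at the exact identity $z^t-\psi_t(z)=f(z)\sum_{d=0}^{t-n}\sh_d(z_{1:n})z^{t-n-d}$ (the paper packages this as $f(z)\,\sh_{t-n}(z,z_{1:n})$, treating $z$ as an extra variable and invoking Fact~\ref{fact:monomial-maximization} once, whereas you bound each $\sh_d(z_{1:n})$ by $\sh_d(r_{1:n})$ and handle the $z$-dependence via the triangle inequality --- a cosmetic difference). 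Your closing remark about analytic continuation/continuity also correctly disposes of the degenerate cases (non-simple $z_{1:n}$, or $z$ at a node) that the paper mentions explicitly.
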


\begin{proof}
Let~$\veps_t(z) := z^t - \psi_t(z)$. 
We can w.l.o.g.~assume that~$t \in \N_{n+1}$ and~$z \notin \{z_1, \dots, z_n\}$, the complementary cases being trivial. 
Also, we assume that~$z_{1:n}$ is simple; the general case can then be verified via continuity. 
Invoking Lemma~\ref{lem:integral-representation} with~$g(z) = \psi_{t}(z)$~and fixing~$r > \max\{|z|,\|r_{1:n}\|_{\infty}\}$,
%
%
\begin{align}
\label{eq:monomial-interpolation-0-integral}
\veps_t(z) 
= \frac{f(z)}{2\pi i} \oint_{\T(r)} \frac{\xi^t}{(\xi - z) f(\xi)} d\xi 
= \frac{f(z)}{2\pi i} \oint_{\T(r)} \frac{\xi^{t-n-1}}{(1 - z\xi^{-1}) \prod_{k \in [n]} (1 - z_k\xi^{-1})} d\xi.
\end{align}
As the result, introducing~$z_0 = z$ for convenience, we get
\begin{align}
\veps_t(z_0) 
= \frac{f(z_0)}{2\pi i} \oint_{\T(r)} \Bigg( \prod_{k = 0}^n  \sum_{s_k \in \N_0} z_k^{s_k} \xi^{-s_k} \Bigg)  \xi^{t-n-1} d\xi 
&= \frac{f(z_0)}{2\pi i} \oint_{\T(r)} \sum_{d \in \N_0} \sh_{d}(z_{0:n}) \xi^{t-n-d-1} d\xi \notag\\
&=f(z_0) \sh_{t-n}(z_{0:n}).
\label{eq:complete-extraction}
\end{align}
Here, we used the geometric progression identity (clearly,~$|z_k\xi^{-1}| < 1$ for~$\xi \in \T(r)$ and~$0 \le k \le n$), collected similar terms, and invoked the residue theorem.
Since~$\sh_{d} \in \sLambda^d$ is~$\sm$-positive, Fact~\ref{fact:monomial-maximization} gives 
\[
\frac{|\veps_t(z)|}{|f(z)|} 
\le \sh_{t-n}(r_{1:n},|z|)  
= \sum_{d=0}^{t-n} \sh_{d}(r_{1:n}) |z|^{t-n-d}
\]
and the first claim follows. Finally, we get~\eqref{eq:monomial-interpolation-0-uniform} since,  by simple combinatorics,~$\sh_{d}(\ones_n) = \binom{n+d-1}{d}$.
\end{proof}

Before we return to the proof, let us make one observation relevant to the discussion in Section~\ref{sec:warmup}.
\begin{corollary}
\label{cor:Fourier-bound} 
For any~$z_{1:n} \in \D^n$ and~$t \in \N_n$, it holds that~$\| \psi_{t}\|_1 \le \sqrt{n}\| \psi_{t}\|_2 \le \sqrt{n} [2^n \binom{t}{n} + 1]$.
\end{corollary}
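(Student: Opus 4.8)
The goal is to bound $\|\psi_t\|_1$ for an arbitrary grid $z_{1:n}\in\D^n$, starting from the pointwise error identity established in Proposition~\ref{prop:monomial-interpolation-0}. The first inequality $\|\psi_t\|_1\le\sqrt n\,\|\psi_t\|_2$ is just Cauchy--Schwarz, since $\psi_t$ has at most $n$ coefficients. So the real work is to show $\|\psi_t\|_2\le 2^n\binom{t}{n}+1$, where the $2$-norm is the $\ell^2$-norm of the coefficient vector $(\psi_t^{(0)}(0)/0!,\dots,\psi_t^{(n-1)}(0)/(n-1)!)$.

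\textbf{Fourier analysis step.} The plan is to realize the coefficient $\ell^2$-norm as an integral over the unit circle: by Parseval, $\|\psi_t\|_2^2 = \frac{1}{2\pi}\oint_{\T}|\psi_t(\xi)|^2\,|d\xi|$, since $\psi_t$ is a polynomial of degree $\le n-1$ and its coefficients are precisely its Fourier coefficients on $\T$. Now apply the triangle inequality in $L^2(\T)$ to the splitting $\psi_t(\xi) = \xi^t - \veps_t(\xi)$, giving $\|\psi_t\|_2 \le \|\xi^t\|_{L^2(\T)} + \|\veps_t\|_{L^2(\T)} = 1 + \|\veps_t\|_{L^2(\T)}$. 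It then remains to bound $\|\veps_t\|_{L^2(\T)}$ by $2^n\binom{t}{n}$. For this I use Proposition~\ref{prop:monomial-interpolation-0} with $r_{1:n}=\ones_n$ (the case $z_{1:n}\in\D^n$): for $\xi\in\T$ we have $|z|=|\xi|=1$, so the bound reads
\[
|\veps_t(\xi)| \le |f(\xi)| \sum_{d=0}^{t-n}\binom{n+d-1}{d} = |f(\xi)|\binom{t}{n},
\]
using the hockey-stick identity $\sum_{d=0}^{t-n}\binom{n+d-1}{d} = \binom{t}{n}$. Hence $\|\veps_t\|_{L^2(\T)} \le \binom{t}{n}\,\|f\|_{L^2(\T)}$, and since $f=P_n[z_{1:n}]$ is monic of degree $n$ with all roots in $\D$, each coefficient $|f_{k}| = |\se_{n-k}(z_{1:n})| \le \binom{n}{n-k}=\binom nk$, so by Parseval again $\|f\|_{L^2(\T)}^2 = \sum_{k=0}^n|f_k|^2 \le \sum_{k=0}^n\binom nk^2 = \binom{2n}{n} \le 4^n$, giving $\|f\|_{L^2(\T)}\le 2^n$.

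\textbf{Assembling.} Combining the above: $\|\psi_t\|_2 \le 1 + \binom{t}{n}\cdot 2^n$, and then $\|\psi_t\|_1 \le \sqrt n\,\|\psi_t\|_2 \le \sqrt n\,[2^n\binom{t}{n}+1]$, as claimed. I should also note the reduction to simple grids: the bound on $\|\psi_t\|_1$ is claimed for all $z_{1:n}\in\D^n$, but Proposition~\ref{prop:monomial-interpolation-0} and the Lagrange construction require simplicity; one extends by continuity of $z_{1:n}\mapsto\psi_t$ on the dense set of simple grids in $\D^n$ (indeed $\psi_t$ has the shift-invariant description via the companion matrix, so it is polynomial in $z_{1:n}$, not merely defined on simple grids).

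\textbf{Expected obstacle.} There is no deep obstacle here; the only points requiring care are (i) getting the right normalization in Parseval so that the coefficient $\ell^2$-norm matches $\frac{1}{2\pi}\oint_\T|\psi_t|^2$ rather than picking up a spurious $2\pi$, and (ii) making sure the estimate $\|f\|_{L^2(\T)}\le 2^n$ is clean — one could even avoid $\binom{2n}{n}\le 4^n$ and instead bound $|f(\xi)|\le\prod_{k}(1+|z_k|)\le 2^n$ pointwise on $\T$, which immediately gives $\|f\|_{L^2(\T)}\le 2^n$ and is perhaps the tidiest route. The triangle-inequality split $\psi_t = \xi^t - \veps_t$ combined with the pointwise bound on $\veps_t$ from Proposition~\ref{prop:monomial-interpolation-0} is what makes the whole argument short, so the structure of the proof is essentially forced.
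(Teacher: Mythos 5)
Your proof is correct and matches the paper's approach: Parseval to convert the coefficient norm to $L^2(\T)$, the pointwise error bound of Proposition~\ref{prop:monomial-interpolation-0} with the hockey-stick identity, and $|f|\le 2^n$ on $\T$. The only cosmetic difference is that you apply the triangle inequality inside $L^2(\T)$ (with the optional $\binom{2n}{n}\le 4^n$ route to bound $\|f\|_{L^2}$), whereas the paper first passes $\|\psi_t\|_{L^2}\le\|\psi_t\|_{L^\infty}$ and then bounds pointwise; both arrangements yield the identical estimate.
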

\begin{proof}
The first inequality holds trivially, since~$\psi_{t} \in \C^{n}$. 
For the second one, we first observe that 
\[
\begin{aligned}
\|\psi_{t}\|_2
= \left( \frac{1}{2\pi}\int_{0}^{2\pi} |\psi_{t}(e^{i\theta})|^2 d\theta \right)^{1/2} 
\le \sup_{|z| = 1} |\psi_{t}(z)|
\overset{\eqref{eq:monomial-interpolation-0-uniform}}{\le}
	  \binom{t}{n} \sup_{|z| = 1} |f(z)| + 1
\le  \binom{t}{n} 2^n + 1.
\end{aligned}
\]
Here, we first used Parseval's identity, then H\"older's inequality, then the hockey-stick identity for the binomial coefficients, and finally~maximized~$|f(z)|=\prod_{j \in [n]}^n|z-z_j|$ over~$z,z_1, \dots, z_n \in \T$.
\end{proof}

The error bound of Proposition~\ref{prop:monomial-interpolation-0} will be used in Section~\ref{sec:shadrin}. 
Our present goal is to complete the proof of Theorem~\ref{th:main-result}, to which end we express the coefficients of~$\psi_{t}(z)$ as symmetric polynomials.

\begin{proposition}
\label{prop:monomial-interpolation-any-k}
Under the premise of Proposition~\ref{prop:monomial-interpolation-0}, for~$t \in \N_n$ and~$0 \le k  \le n-1$, it holds that
\begin{align*}
-\frac{\psi_{t}^{(k)}(0) }{k!}
\;=\; \sum_{j = 0}^{\min\{k,\,t-n\}} (-1)^{n-k+j} \, \se_{n-k+j}(z_{1:n}) \, \sh_{t-n-j}(z_{1:n})
\;=\; (-1)^{n-k} \sch_{(t-n\,|\,n-k-1)}(z_{1:n})
\end{align*}
where, as defined in~Section~\ref{sec:partitions},~$\sch_{(a|b)}$ is the Schur function of the hook partition~$(a|b) = (a+1,1^{b})$.
\end{proposition}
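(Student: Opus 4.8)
The plan is to extract the $k$-th Taylor coefficient of the error polynomial $\veps_t(z) = z^t - \psi_t(z)$ at $z = 0$ and to relate it to the corresponding coefficient of $f$. The starting point is formula \eqref{eq:complete-extraction} from the proof of Proposition \ref{prop:monomial-interpolation-0}, which, with $z_0 = z$ played as a free variable, reads $\veps_t(z) = f(z)\,\sh_{t-n}(z, z_{1:n})$. For $0 \le k \le n-1$, the monomial $z^t$ contributes nothing to the $k$-th Taylor coefficient (since $t \ge n > k$), so I would write
\[
-\frac{\psi_t^{(k)}(0)}{k!} = \frac{\veps_t^{(k)}(0)}{k!} = [z^k]\Big( f(z)\,\sh_{t-n}(z, z_{1:n}) \Big).
\]
Now expand $\sh_{t-n}(z, z_{1:n}) = \sum_{j \ge 0} z^j\, \sh_{t-n-j}(z_{1:n})$ (the generating-function identity $\sum_d \sh_d(z, z_{1:n}) u^d = (1-zu)^{-1}\prod_k (1 - z_k u)^{-1}$ used in the previous proof), and recall $f(z) = P_n[z_{1:n}](z) = \sum_{i=0}^n (-1)^{n-i}\se_{n-i}(z_{1:n})\, z^i$. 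Taking the coefficient of $z^k$ in the Cauchy product gives
\[
-\frac{\psi_t^{(k)}(0)}{k!} = \sum_{i + j = k} (-1)^{n-i}\, \se_{n-i}(z_{1:n})\, \sh_{t-n-j}(z_{1:n})
= \sum_{j=0}^{\min\{k, t-n\}} (-1)^{n-k+j}\, \se_{n-k+j}(z_{1:n})\, \sh_{t-n-j}(z_{1:n}),
\]
where I substituted $i = k - j$ and used that $\sh_{t-n-j}$ vanishes for $j > t-n$. This establishes the first equality.

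For the second equality I would invoke Fact \ref{fact:hook-bilinear}, which expresses the hook Schur function as $\sch_{(a|b)} = \sum_{j \ge 0} (-1)^j \sh_{a-j}\se_{b+j+1}$, valid as an identity of symmetric functions (and hence after specialization to $z_{1:n}$). Setting $a = t-n$ and $b = n-k-1$, so that $b + j + 1 = n-k+j$ and $a - j = t-n-j$, gives
\[
\sch_{(t-n\,|\,n-k-1)}(z_{1:n}) = \sum_{j \ge 0} (-1)^j\, \sh_{t-n-j}(z_{1:n})\, \se_{n-k+j}(z_{1:n}).
\]
Multiplying the previous display by $(-1)^{n-k}$ and comparing term by term yields exactly $-\psi_t^{(k)}(0)/k! = (-1)^{n-k}\sch_{(t-n\,|\,n-k-1)}(z_{1:n})$, since the finite truncation at $j = \min\{k, t-n\}$ does not matter: the extra terms in Fact \ref{fact:hook-bilinear} vanish because $\sh_{t-n-j} = 0$ for $j > t-n$, while for $t-n \ge j > k$ the claim is that those terms should not appear — here I should double-check the range, noting that in fact $b = n-k-1 \ge 0$ forces $k \le n-1$, and the Fact's summation can be taken over all of $\Z_+$ with the convention that negative-index $\sh,\se$ vanish, which automatically restores agreement with the truncated sum above.

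The computation is essentially routine once \eqref{eq:complete-extraction} and Fact \ref{fact:hook-bilinear} are in hand; the only mild subtlety — and the step I would be most careful about — is bookkeeping the sign $(-1)^{n-k+j}$ and the exact summation range, making sure the Vandermonde-type sign convention for the coefficients of $f$ (namely $f_i = (-1)^{n-i}\se_{n-i}(z_{1:n})$, consistent with $f_{k-1} = (-1)^{n-k+1}\se_{n-k+1}(z_{1:n})$ used in the proof of Proposition \ref{prop:time-zero}) matches the Frobenius indexing of the hook $(a|b) = (a+1, 1^b)$ so that the two displayed expressions align without an off-by-one error in $b$.
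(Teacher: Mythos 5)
Your proof is correct and arrives at the same intermediate identity as the paper. The difference is purely in how the $k$-th Taylor coefficient of $\veps_t(z) = z^t - \psi_t(z)$ is extracted: the paper differentiates the integral representation~\eqref{eq:monomial-interpolation-0-integral} $k$ times via the Leibniz rule, sets $z=0$, and then evaluates the resulting family of contour integrals $\frac{1}{2\pi i}\oint_{\T(r)} \xi^{t-j-1}/f(\xi)\,d\xi = \sh_{t-n-j}(z_{1:n})$ by a second residue/geometric-series computation, whereas you start from the already-evaluated closed form $\veps_t(z) = f(z)\,\sh_{t-n}(z, z_{1:n})$ of~\eqref{eq:complete-extraction}, expand $\sh_{t-n}(z, z_{1:n}) = \sum_{j\ge 0} z^j\,\sh_{t-n-j}(z_{1:n})$, and take the Cauchy product with the coefficient sequence of $f$. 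Your route is slightly more elementary — it replaces the Leibniz-plus-residues step with a routine power-series multiplication — but both arguments rest on the same two ingredients (the factorization of $\veps_t$ through $f$, and Fact~\ref{fact:hook-bilinear}) and finish with the same bookkeeping: the truncation at $j = \min\{k, t-n\}$ is immaterial because $\se_d(z_{1:n}) = 0$ for $d > n$ and $\sh_d = 0$ for $d < 0$. Your sign-tracking and range-of-summation checks are correct.
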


\begin{proof}
Sequential differentiation of~\eqref{eq:monomial-interpolation-0-integral} via the Leibniz formula gives the following for~$r > \|r_{1:n}\|_{\infty}$,
\begin{equation*}
\binom{t}{k} z^{t-k}-\frac{\psi_t^{(k)}(z)}{k!} 
= \sum_{j = 0}^{k} \binom{k}{j}  \frac{j!}{k!}  \frac{f^{(k-j)}(z)}{2\pi i}\oint_{\T(r)} \frac{\xi^{t}}{f(\xi) (\xi-z)^{j+1}} d \xi.
\end{equation*}
(As in the proof of Proposition~\ref{prop:monomial-interpolation-0}, we can assume w.l.o.g. that~$z_{1:n}$ is simple.)
Plugging in~$z = 0$,
\begin{align*}
-\frac{\psi_t^{(k)}(0)}{k!} 
= \sum_{j = 0}^{k} \frac{f^{(k-j)}(0)}{(k-j)!}  \frac{1}{2\pi i}\oint_{\T(r)} \frac{\xi^{t-j-1}}{f(\xi)} d \xi 
= \sum_{j = 0}^{k} (-1)^{n-k+j} \se_{n-k+j}(z_{1:n}) \frac{1}{2\pi i} \oint_{\T(r)} \frac{\xi^{t-j-1}}{f(\xi)} d \xi.
\end{align*}
Furthermore, proceeding as in~\eqref{eq:complete-extraction} we compute
\[
\begin{aligned}
\frac{1}{2\pi i} \oint_{\T(r)} \frac{\xi^{t-j-1}}{f(\xi)} d \xi
= \frac{1}{2\pi i} \oint_{\T(r)} \frac{\xi^{t-n-j-1}}{\prod_{\ell \in [n]} (1 - z_\ell\xi^{-1})} d\xi
&= \frac{1}{2\pi i} \oint_{\T(r)} \Bigg( \prod_{\ell \in [n]} \sum_{s_\ell \in \N_0} z_\ell^{s_\ell} \xi^{-s_\ell} \Bigg) \xi^{t-n-j-1} d\xi \\
&=\sh_{t-n-j}(z_{1:n}).
\end{aligned}
\]
(Due to the convention that~$\sh_d = 0$ for~$d < 0$, this is valid for any~$j \in \N_0$.)
The first claimed identity follows since~$\se_{n-k+j}(z_{1:n}) \sh_{t-n-j}(z_{1:n}) = 0$ for~$j > \min\{k,t-n\}$; 
the second one from Fact~\ref{fact:hook-bilinear}.
\end{proof}

We complete the proof of Theorem~\ref{th:main-result}. 
Invoking~Fact~\ref{fact:interpolation}, then Proposition~\ref{prop:monomial-interpolation-any-k}, and finally Fact~\ref{fact:monomial-maximization}, 
\begin{align}
\sM_{t}(\D_n(r_{1:n})|\D_n(w_{1:n}))
= \sum_{k \in [n]} w_k \frac{\big|\psi_{t}^{(k-1)}(0)\big|}{(k-1)!} 
&= \sup_{z_{1:n} \,\in\, \D_n(r_{1:n})}\sum_{k \in [n]} w_k |\sch_{(t-n|n-k)}(z_{1:n})| \notag\\
&= \sum_{k \in [n]} w_k \sch_{(t-n|n-k)}(r_{1:n})
= M_{t}(P_n[e^{i\theta} r_{1:n}] | \D_n(w_{1:n})).
\label{eq:point-amp-value}
\end{align}
Note that the last equation is delivered with the initial data~$x_{k-1} = -w_{k} e^{i(\pi-\theta)(n-k+1)}$,~$k \in [n]$.\qed

\begin{remark}
{\em 
Equation~\eqref{eq:point-amp-value} expresses the value~$\sM_t(\D_n(r_{1:n})|\D_n(w_{1:n}))$ in terms of Schur polynomials, which allows to compute it for any~$(r_{1:n}, w_{1:n})$.
However, a more practical way to compute this value is by using the bilinar representation of~$\sch_{(a|b)}$ in terms of~$\sh_d,\se_d$ (cf.~Fact~\ref{fact:hook-bilinear} and~Proposition~\ref{prop:monomial-interpolation-any-k}) in combination with Newton's identities, using which we can compute~$\sh_d(r_{1:n}),\se_{d}(r_{1:n})$ iteratively.
}
\end{remark}

\subsection{Deriving~$\sM_t(\D(r)^n|\D^n)$ from the hook-content formula.}
With~\eqref{eq:point-amp-value} at hand, we can compute~$\sM_t(\D(r)^n|\D^n)$ by using the special case~\eqref{eq:hook-content-formula} of the hook-content formula (see~Section~\ref{sec:hook-content-formula}):
\[
\begin{aligned}
\sum_{k \in [n]} \sch_{(t-n|k-1)}(r\ones_n) 
&=
	\sum_{k \in [n]} \binom{t}{n-k} \binom{t-n+k-1}{k-1}  r^{t-n+k} \\
&= \sum_{k \in [n]} \binom{t}{n-k} \binom{t-n+k}{k} \frac{k r^{t-n+k}}{t-n+k}   
=  \binom{t}{n} \sum_{k \in [n]} \binom{n}{k}   \frac{kr^{t-n+k}}{t-n+k} \;,
\end{aligned}
\]
where the last step is by simple algebra.
As expected, this is the same expression as in Proposition~\ref{prop:repeated-roots-explicit}. 

\begin{remark}
{\em The above argument suggests an alternative---and likely unknown---proof of the hook-content formula~\eqref{sec:hook-content-formula-general}. 
Indeed, the specialization~\eqref{eq:hook-content-formula} of the formula for hook partitions follows by adapting the calculation in the proof of Proposition~\ref{prop:repeated-roots-explicit} to~$w_{1:n} = (1,0,\dots,0)$.
Then, the Giambelli identity
$
\sch_{(a_{1:s}|b_{1:s})} = \det(\sch_{a_j|b_k})_{j,k \,\in [s]},
$
likely allows to derive~\eqref{sec:hook-content-formula-general} from~\eqref{eq:hook-content-formula} combinatorially; cf.~\cite{eugeciouglu1988combinatorial}.
}
\end{remark}
\section{Reinhardt domains}
\label{sec:reinhardt}

Let us discuss the implications of Theorem~\ref{th:main-result} in the scenario where~$\RR,\SS$ are Reinhardt domains.
\begin{definition}
\label{def:reinhardt-domain}
$\boldsymbol{D} \subseteq \C^n$ is a {\em Reinhardt domain centered at~$a_{1:n}$} if together with any~$z_{1:n} \in \C^n$,~$\boldsymbol{D}$ contains any~$\wt z_{1:n} \in \C^n: |\wt z_k - a_k| = |z_k - a_k|$ for~$k \in [n]$. 
If~``$=$'' is replaced with~``$\le$'', $\boldsymbol{D}$ is {\em complete}.
\end{definition}
Equivalently, Reinhard domain is simply a union of tori, or of polydiscs in the complete case.
Our subsequent discussion is limited to {\em origin-centered} Reinhardt domains, which contain any~$z_{1:n}$ together with the torus~$\T_n(r_{1:n})$ where~$r_k = |z_k|$, or the polydisc~$\D_n(r_{1:n})$ in the complete case.
Let
\[
\begin{aligned}
\boldsymbol{D}_+ &:= \{(|z_{1}|, \dots, |z_{n}|): z_{1:n} \in \boldsymbol{D}\},\\
\boldsymbol{D}_{\pm} &:= \{(\pm|z_{1}|, \dots, \pm|z_{n}|): z_{1:n} \in \boldsymbol{D}\}.
\end{aligned}
\]

\begin{corollary}
\label{cor:Reinhardt-to-orthant}
Let~$\RR,\SS$ be two compact and origin-centered Reinhard domains, then for~$t \in \N_n$,
\[
\sM_t(\RR|\SS) = \sM_t(\RR_+|\SS_{\pm}) = \sup_{r_{1:n} \in \RR_+,\; w_{1:n} \in \SS_+} \sum_{k \in [n]}  w_k \sch_{(t-n|n-k)}(r_{1:n}). 
\]
\end{corollary}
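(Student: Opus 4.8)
\textbf{Proof proposal for Corollary~\ref{cor:Reinhardt-to-orthant}.}

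The plan is to reduce the supremum over the Reinhardt domains to a supremum over their radius hulls by applying Theorem~\ref{th:main-result} ``slice by slice,'' exploiting that a compact origin-centered Reinhardt domain is the union of the polydiscs (resp.\ tori) over its radius profiles. First I would unwind the definitions: since $\RR$ is origin-centered and compact, $\RR = \bigcup_{r_{1:n} \in \RR_+} \T_n(r_{1:n})$, and similarly $\SS = \bigcup_{w_{1:n} \in \SS_+} \T_n(w_{1:n})$ (one only needs tori here, not full polydiscs, since $\sM_t$ already takes a supremum over initializations and characteristic polynomials). Then, directly from the definitions~\eqref{def:instant-amplitude}--\eqref{def:max-instant-amplitude},
\[
\sM_t(\RR|\SS) = \sup_{r_{1:n} \in \RR_+} \; \sup_{w_{1:n} \in \SS_+} \; \sM_t(\T_n(r_{1:n}) | \T_n(w_{1:n})).
\]

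Next I would observe that for any fixed radius profiles, $\sM_t(\T_n(r_{1:n})|\T_n(w_{1:n})) = \sM_t(\D_n(r_{1:n})|\D_n(w_{1:n}))$. The inequality ``$\le$'' is immediate since tori are contained in the corresponding polydiscs. For ``$\ge$'', note that on the initialization side, $M_t(f|\cdot)$ is the supremum of a linear functional, so its maximum over the polydisc $\D_n(w_{1:n})$ is attained on the distinguished torus $\T_n(w_{1:n})$; on the root side, Theorem~\ref{th:main-result} tells us the supremum over $\D_n(r_{1:n})$ is attained at a \emph{cophase} configuration $e^{i\theta}r_{1:n}$, whose entries have moduli exactly $r_{1:n}$, hence lie on $\T_n(r_{1:n})$. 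Therefore the optimal pair witnessing $\sM_t(\D_n(r_{1:n})|\D_n(w_{1:n}))$ already lives inside $\T_n(r_{1:n}) \times \T_n(w_{1:n})$, giving the reverse inequality. Combining, and invoking the closed-form value from~\eqref{eq:main-result},
\[
\sM_t(\T_n(r_{1:n})|\T_n(w_{1:n})) = \sum_{k \in [n]} w_k\, \sch_{(t-n|n-k)}(r_{1:n}),
\]
and substituting into the displayed double supremum yields exactly
\[
\sM_t(\RR|\SS) = \sup_{r_{1:n} \in \RR_+,\; w_{1:n} \in \SS_+} \sum_{k \in [n]} w_k\, \sch_{(t-n|n-k)}(r_{1:n}).
\]
To get the middle expression $\sM_t(\RR_+|\SS_{\pm})$, I would note that $\RR_+ \subseteq \R_+^n$ is itself (the radius hull of, and contained in) a complete Reinhardt domain whose radius hull is again $\RR_+$, so the same computation applies with $\RR$ replaced by $\RR_+$ and $\SS$ by $\SS_{\pm}$ (the sign flips in $\SS_\pm$ are harmless: $\sch_{(t-n|n-k)}$ is a homogeneous polynomial with nonnegative, i.e.\ $\sm$-positive, coefficients, so Fact~\ref{fact:monomial-maximization} makes the maximization over signs collapse to $w_{1:n} \in \SS_+$); this gives the equality of all three quantities.

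The main obstacle I anticipate is bookkeeping around compactness and the union-of-tori representation: one must be careful that the outer supremum over $r_{1:n} \in \RR_+$ genuinely commutes with the maximization over characteristic polynomials supported on that torus, and that no boundary/limit subtleties arise when $\RR_+$ or $\SS_+$ is not a ``nice'' set (e.g.\ some $r_k$ vanishing, which is permitted by Theorem~\ref{th:main-result}). This is handled by the fact that $r_{1:n} \mapsto \sch_{(t-n|n-k)}(r_{1:n})$ is a polynomial, hence continuous, and $\RR_+, \SS_+$ are compact as continuous images of the compact sets $\RR, \SS$; so the suprema are attained and all the interchanges are legitimate. Everything else is a direct consequence of Theorem~\ref{th:main-result} and the $\sm$-positivity recorded in Section~\ref{sec:partitions}.
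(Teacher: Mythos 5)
Your proposal is correct and follows essentially the same route as the paper: decompose $\RR,\SS$ over their radius profiles, invoke Theorem~\ref{th:main-result} to see that the supremum over each $\D_n(r_{1:n})\times\D_n(w_{1:n})$ is already attained at a point of the torus $\T_n(r_{1:n})\times\T_n(w_{1:n})$ — in fact (taking $\theta=0$) at a point of $\RR_+\times\SS_{\pm}$ — and then read off the Schur-polynomial value from~\eqref{eq:main-result}. The only soft spot is the final paragraph: the claim that ``the same computation applies with $\RR$ replaced by $\RR_+$'' is not literally meaningful ($\RR_+$ is not a Reinhardt domain, so it is not a union of tori), but the equality $\sM_t(\RR_+|\SS_\pm)=\sup_{r,w}\sum_k w_k\,\sch_{(t-n|n-k)}(r_{1:n})$ follows directly from what you have already established — the inclusion $\RR_+\subseteq\RR$, $\SS_\pm\subseteq\SS$ gives ``$\le$'', and the fact that the Theorem~\ref{th:main-result} maximizer with $\theta=0$ lies in $\RR_+\times\SS_\pm$ gives ``$\ge$'' — which is precisely the structure of the paper's proof.
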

\begin{proof}
It is clear that~$\sM_t(\RR|\SS) \ge \sM_t(\RR_+|\SS_{\pm})$, cf.~\eqref{def:instant-amplitude}--\eqref{def:max-instant-amplitude}.
For the reverse inequality, observe that
\[
\sM_t(\RR|\SS) = \sup_{r_{1:n} \in \RR_+, \; w_{1:n} \in \SS_+} \sM_t(\D_n(r_{1:n})|\D_n(w_{1:n})). 
\]
By compactness of~$\RR \times \SS$ and continuity of~$\sM_t(\D_n(r_{1:n})|\D_n(w_{1:n}))$ as a function of~$(r_{1:n}, w_{1:n})$, 
the right-hand side is equal to~$\sM_t(\D_n(r_{1:n}^*)|\D_n(w_{1:n}^*))$ for some~$(r_{1:n}^*, w_{1:n}^*) \in \RR_+ \times \SS_+$. 
Meanwhile, by Theorem~\ref{th:main-result}, the value~$\sM_t(\D_n(r_{1:n}^*)|\D_n(w_{1:n}^*))$  is attained on~$(z_{1:n}, x_{0:n-1}) \in \RR_+ \times \SS_{\pm}$, where~$z_{1:n}^{\vphantom{*}} = r_{1:n}^*$ and~$x_{k-1} = (-1)^{n-k} w_k$ for~$k \in [n]$.
As such,~$\sM_t(\RR_+|\SS_{\pm}) \ge \sM_t(\D_n(r_{1:n}^*)|\D_n(w_{1:n}^*)) = \sM_t(\RR|\SS)$. 
\end{proof}

According to Corollary~\ref{cor:Reinhardt-to-orthant}, for origin-centered Reinhardt domains, we are left with the problem
\begin{equation}
\label{eq:Schur-on-orthant}
\sup_{r_{1:n} \in \RR_+,\; w_{1:n} \in \SS_+} \sF_t(r_{1:n} | w_{1:n}),
\end{equation}
where~$\sF_t(\cdot | w_{1:n}) := \sum_{k \in [n]}  w_k \sch_{(t-n|n-k)}(\cdot)$, defined on~$\R_+^{n}$, is permutation-symmetric~$\forall w_{1:n} \in \R_+^n$. 
Let us now discuss how one can exploit the special structure of~$\sF_t$ to solve~\eqref{eq:Schur-on-orthant} in certain scenarios.

\subsection{Log-convex and log-affine domains of roots.}
Note that the domain~$\RR$ can be assumed symmetric: by the permutation symmetry of~$\sF_t(\cdot|w_{1:n})$,~$\sM_t(\cdot|\SS)$ is not changed if~$\RR$ is replaced with its symmetrization~$\{(z_{\sigma_1},\dots,z_{\sigma_n}):  z_{1:n} \in \RR,\; \sigma \in \mathfrak{G}_n \}$, where~$\mathfrak{G}_n$ is the symmetric group on~$[n]$. 
Note also that~$\sF_t(\cdot | w_{1:n})$ is a polynomial with positive coefficients for any~$w_{1:n} \in \R_+^{n}$. 
In the context of geometric programming~\cite{peterson1980geometric,boyd2007tutorial}, this suggests changing the variables to~$x_{1:n} := \log(r_{1:n})$. 
Now, assume~$\RR_+$ is compact and {\em logarithmically convex}, i.e.~
$
\log(\RR_{+}) := \{\log(r_{1:n}): r_{1:n} \in \RR_+, r_{1:n} > 0\}
$
is a convex set.\footnote{This is a very natural class: each such set is the convergence domain for some power series, and vice versa~\cite{vladimiroff1966methods,hormander1973introduction,shabat1992introduction}.}
Since~$\log\sF_t(\exp(\cdot)|w_{1:n})$ and~$\sup_{w_{1:n} \in \SS_+} \log \sF_t(\exp(\cdot)|w_{1:n})$ are convex and entrywise nondecreasing on~$\R^n$, each of them is maximized at 
an extreme point of~$\log(\RR_+)$. 
If~$\RR_+$ is {\em log-affine}, i.e.~$\log(\RR_+) = \Conv(\boldsymbol{V})$ is a polytope, this leads to the following conceptual method for solving~\eqref{eq:Schur-on-orthant}.

\begin{figure}[H]
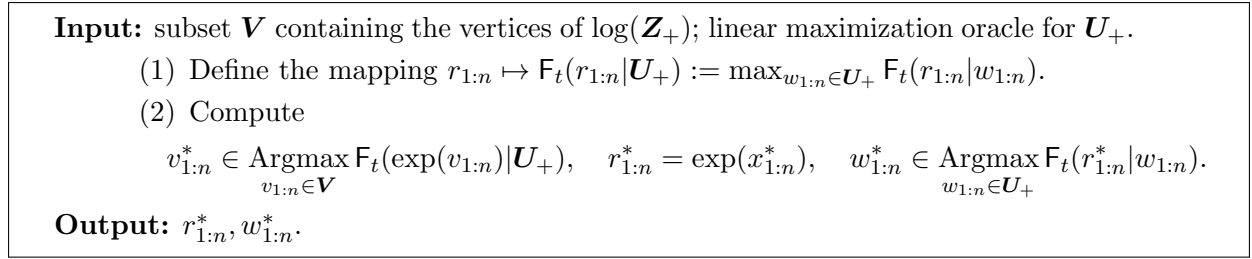

\begin{framed}
\vspace{-0.4cm}
\begin{quote}
\label{eq:Newton-conceptual}
\begin{center}
\end{center}
\noindent
\hspace{-1.0cm}{\bf Input:} subset~$\boldsymbol{V}$ containing the vertices of~$\log(\RR_+)$; linear maximization oracle for~$\SS_+$.
\begin{enumerate}
\vspace{0.1cm}
\item Define the mapping~$r_{1:n} \mapsto \sF_t(r_{1:n}|\SS_+) := \max_{w_{1:n} \in \SS_+} \sF_t(r_{1:n}|w_{1:n})$.
\vspace{0.1cm}
\item Compute
\[
\quad\quad\quad\quad
v_{1:n}^* \in \Argmax_{v_{1:n} \in \boldsymbol{V}} \sF_t(\exp(v_{1:n})|\SS_+), 
\quad r_{1:n}^* = \exp(x_{1:n}^*), \quad
w_{1:n}^* \in \Argmax_{w_{1:n} \in \SS_+} \sF_t(r_{1:n}^*|w_{1:n}).
\]
\vspace{-0.2cm}
\end{enumerate}
\hspace{-1.0cm}{\bf Output:} $r_{1:n}^*, w_{1:n}^*$. 
\end{quote}
\vspace{-0.2cm}
\end{framed}
\vspace{-0.3cm}
\caption{Conceptual procedure for solving~\eqref{eq:Schur-on-orthant} in the case of a log-affine set~$\RR_+$.}
\label{fig:vertex-method}
\end{figure}

\begin{example}
{\em If~$\RR = \D_n(r_{1:n})$, then~$\log(\RR_+) = \{x_{1:n} \in \R^{n}: x_{1:n} \le  \log(r_{1:n})\}$ and
$
\boldsymbol{V} = \{\log(r_{1:n})\}.
$
Therefore,~$\sM_t(\D_n(r_{1:n})|\SS) = \sup_{w_{1:n} \in \SS_+} \sF_t(r_{1:n}|w_{1:n})$ for any origin-centered Reinhardt domain~$\SS$.
}
\end{example}

\begin{example}
{\em Let~$\RR$ be the origin-centered Reinhardt domain with
$
\log(\RR_+) = \{x \in \R^n: Ax \le 0 \}, 
$
where~$A \in \R^{m \times n}$ is of full column rank with nonnegative entries.
The only point in~$\boldsymbol{V}$ is the origin, and computing~$\sM_t(\RR|\SS) = \sup_{w_{1:n} \in \SS_+} \sF_t(\ones_n|w_{1:n})$ reduces to linear maximization over~$\SS_{+}$, cf.~\eqref{eq:hook-content-formula}.  
}
\end{example}

\begin{example}
{\em Let~$\RR$ be the origin-centered Reinhardt domain for~$\RR_+ = \{r_{1:n} \ge \ones_n: \prod_{k \in [n]} r_k \le q\}$.
Then~$\log(\RR_+)$ is a rescaled probability simplex,~$\boldsymbol{V}$ is the set of rescaled canonical basis vectors, and 
\[
\sM_t(\RR|\SS) = \sup_{w_{1:n} \in \SS_+} \sum_{k \in [n]} w_k \sch_{(t-n|n-k)}(q,\ones_{n-1}). 
\]
We can compute~$\sch_{(t|n-k)}(q,\ones_{n-1})$ explicitly, by combining Fact~\ref{fact:hook-bilinear} with the following expressions:
\[
\begin{aligned}
\se_d(q,\ones_{n}) 
	= \binom{n}{d} + q\binom{n}{d-1},
\quad\quad
\sh_d(q,\ones_{n}) 
	= \sum_{k=0}^d  q^{d-k} \binom{n + k - 1}{k}.
\end{aligned}
\]
}
\end{example}

Note that the monotonicities of~$\sF_t(\cdot|w_{1:n})$ and~$\sF_t(\cdot|\SS_+) = \sup_{w_{1:n} \in \SS_+} \sF_t(\cdot|w_{1:n})$ allow to eliminate any vertex~$v \in V$ for which there exists some other~$v' \in V$ with~$v' \ge v$, as in the following example.

\begin{example}
{\em 
If~$\RR$ be origin-centered Reinhardt domain, 
$
\log(\RR_+) = \{ p \ones_n + (1-p)x: \|x\|_{\infty} \le 1 \}
$
with~$p \in (0,1)$. 
Any~$v \in \boldsymbol{V}$ is dominated by~$\ones_n$, thus~$\sM_t(\RR|\SS) = \sup_{w_{1:n} \in \SS_+}  \sum_{k \in [n]} w_k  \sch_{(t-n|n-k)} (e \ones_n)$. 
}
\end{example}

\section{Application to Lagrange interpolation}
\label{sec:shadrin}

When proving Theorem~\ref{th:main-result},  we have expressed the interpolation residuals~$\veps_t(z) = z^{t} - \psi_{t}(z)$ for monomials of degree~$t \in \N_n$
as polynomials in~$z$ that contain the characteristic polynomial as a factor.
We then used these expressions, cf.~\eqref{eq:complete-extraction}, to analyze the derivatives of~$\veps_t(z)$ at~$0$, showing~$\sch$-positivity of these as symmetric polynomials in~$z_{1:n}$. 
More generally, we could consider these derivatives as functions of a complex variable -- in particular, bound the ratio of~$k^{\textup{th}}$ derivatives of~$\veps_t(z)$ and~$\veps_n(z)$ pointwise. Alternatively, we could examine the ratio of the sup-norms of these derivatives on some domain in~$\C$ or~$\R$.
Our focus on these particular criteria is motivated by the progress in approximation theory in the 1990s, regarding sharp error bounds for Lagrange interpolation on~$\R$, which we shall review in a short while.
But before that, let us adapt Proposition~\ref{prop:monomial-interpolation-any-k} to our purposes.

In what follows, we denote the Lagrange polynomial~$L_n[(z_{1:n})^t|z_{1:n}]$ by~$\psi_t(\cdot|z_{1:n})$, the characteristic polynomial~$P_n[z_{1:n}]$ by~$f(\cdot|z_{1:n})$, 
and the Lagrange polynomial~$L_n[g(z_{1:n})|z_{1:n}]$ of~$g(\cdot)$ by~$\hat g(\cdot|z_{1:n})$.


\begin{proposition}
\label{prop:monomial-interpolation-error-via-schur}
Under the premise of Proposition~\ref{prop:monomial-interpolation-0}, for~$t \in \N_n$, any~$0 \le k \le n-1$, and~$z \in \C$, 
\begin{equation}
\label{eq:monomial-interpolation-error-via-schur}
\begin{aligned}
\frac{1}{k!} \frac{d^k}{dz^k} \big[z^{t} - \psi_{t}(z|z_{1:n})\big]
&= (-1)^{n-k} \sum_{d = 0}^{t-n} \binom{t}{n+d} z^{t-n-d}  \sch_{(d|n-k-1)} (z_{1:n}-z1_n) \\
&= z^{t-k} \sum_{d = 0}^{t-n} (-1)^{d} \binom{t}{n+d} \sch_{(d|n-k-1)} (1_n-z^{-1}z_{1:n}) \;\; \text{for} \;\; z \ne 0.
\end{aligned}
\end{equation}
\end{proposition}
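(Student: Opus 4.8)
The plan is to repeat the integral-representation argument from the proof of Proposition~\ref{prop:monomial-interpolation-any-k}, but now keep $z$ as a free complex variable instead of setting $z = 0$. Concretely, I would start from the Leibniz-differentiated form of the Hermite--Genocchi--type contour integral
\[
\binom{t}{k} z^{t-k} - \frac{\psi_t^{(k)}(z|z_{1:n})}{k!}
= \sum_{j=0}^{k} \frac{f^{(k-j)}(z|z_{1:n})}{(k-j)!} \cdot \frac{1}{2\pi i} \oint_{\T(r)} \frac{\xi^{t}}{f(\xi|z_{1:n})(\xi - z)^{j+1}}\, d\xi,
\]
valid for $r > \max\{|z|, \|r_{1:n}\|_\infty\}$ and simple $z_{1:n}$ (the general case by continuity, as before). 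Two ingredients then need to be assembled: (i) an expansion of the contour integral $\frac{1}{2\pi i}\oint_{\T(r)} \xi^{t} / [f(\xi|z_{1:n})(\xi-z)^{j+1}]\, d\xi$, and (ii) a bilinear-in-$f^{(k-j)}$-and-$\sh$ resummation identifying the hook Schur polynomial evaluated at the shifted grid $z_{1:n} - z\,1_n$.

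For ingredient (i), I would write $1/(\xi - z)^{j+1}$ as $\xi^{-(j+1)}(1 - z\xi^{-1})^{-(j+1)} = \xi^{-(j+1)} \sum_{m \ge 0} \binom{m+j}{j} z^{m}\xi^{-m}$, combine with $1/f(\xi|z_{1:n}) = \xi^{-n}\prod_\ell (1 - z_\ell \xi^{-1})^{-1} = \xi^{-n}\sum_{s\ge 0}\sh_s(z_{1:n})\xi^{-s}$, and extract the coefficient of $\xi^{-1}$ by the residue theorem. This gives a double sum $\sum_{m,s \ge 0: m + s = t - n - j} \binom{m+j}{j} z^m \sh_s(z_{1:n})$. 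A cleaner route is to recognize $\sum_m \binom{m+j}{j} z^m \sh_s(z_{1:n}) x^{\dots}$ as the generating function of $\sh_{t-n-j}(z_{1:n}, z\,1_{j+1})$ — i.e.\ the complete homogeneous symmetric polynomial in the grid $z_{1:n}$ augmented by $z$ with multiplicity $j+1$ — since adjoining $j+1$ copies of $z$ multiplies the $\sh$-generating function by $(1 - z\xi^{-1})^{-(j+1)}$. Then the contour integral equals $\sh_{t-n-j}(z_{1:n}, z\,1_{j+1})$, exactly paralleling the $z=0$ computation where it was simply $\sh_{t-n-j}(z_{1:n})$.

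For ingredient (ii), I would plug $f^{(k-j)}(z|z_{1:n})/(k-j)! = \sum_{i} \binom{i}{k-j} (-1)^{n-i}\se_{n-i}(z_{1:n}) z^{i-(k-j)}$, or — more economically — change variables by the Taylor expansion of $f$ around $z$: $f^{(k-j)}(z|z_{1:n})/(k-j)!$ is the coefficient of $w^{k-j}$ in $f(z+w|z_{1:n}) = \prod_\ell (z + w - z_\ell) = \prod_\ell (w - (z_\ell - z))$, hence equals $(-1)^{n-(k-j)}\se_{n-k+j}(z_{1:n} - z\,1_n)$. After substituting this and the value of the contour integral, the target becomes
\[
-\frac{1}{k!}\frac{d^k}{dz^k}\big[z^t - \psi_t(z|z_{1:n})\big] + \binom{t}{k} z^{t-k} - \frac{\psi_t^{(k)}(z|z_{1:n})}{k!} \cdots
\]
well — more precisely, after canceling the $\binom{t}{k}z^{t-k}$ term, the left side equals $\sum_{j=0}^{k} (-1)^{n-k+j}\se_{n-k+j}(z_{1:n}-z\,1_n)\,\sh_{t-n-j}(z_{1:n},z\,1_{j+1})$. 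I would then need to massage the argument of $\sh$: since $\sh_{t-n-j}(z_{1:n}, z\,1_{j+1})$ is a polynomial in $z$ and in $z_{1:n}$, re-expanding it around the shifted variables $z_{1:n} - z\,1_n$ gives $\sh_{t-n-j}(z_{1:n}, z\,1_{j+1}) = \sum_{d} z^{?}\sh_{\cdot}(z_{1:n} - z\,1_n)$ via the identity $\sh_p(y_1,\dots,y_n, c\,1_{j+1}) = \sum_{q} \binom{q+j}{j} c^{q}\, \sh_{p-q}(y_{1:n})$ applied with $y_\ell = z_\ell - z$ after a $z \mapsto 0$-centered split — this reassembles exactly the bilinear $\sum_j (-1)^j \se_{n-k+j}\sh_{d-j}$ combination that, by Fact~\ref{fact:hook-bilinear}, equals $\pm\sch_{(d|n-k-1)}$ of the shifted grid, with the $\binom{t}{n+d} z^{t-n-d}$ prefactor emerging from bookkeeping the powers of $z$ and the combinatorial coefficients $\binom{m+j}{j}$ summed against $\binom{t}{k}$-type factors. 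The second line of the claim is then immediate: for $z \ne 0$ factor $z^{t-k}$ out of the first line, using homogeneity $\sch_{(d|n-k-1)}(z_{1:n} - z\,1_n) = z^{d + n - k}\,\sch_{(d|n-k-1)}(z^{-1}z_{1:n} - 1_n) = (-1)^{n-k}z^{d+n-k}\sch_{(d|n-k-1)}(1_n - z^{-1}z_{1:n})$ — wait, hook Schur polynomials are homogeneous of degree $d + (n-k-1) + 1 = d + n - k$, so this is exactly right, and the sign $(-1)^{n-k}$ from pulling out $(-1)$ from each of the $n-k$ ``columns'' of the hook (degree of homogeneity) combines with the leading $(-1)^{n-k}$ to give $+1$, leaving $(-1)^d$ after also extracting the power — matching the stated second line.

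The main obstacle I anticipate is ingredient (ii): correctly tracking the interplay between the three sources of powers of $z$ — from the Taylor coefficients of $f$ around $z$, from the $\binom{m+j}{j}z^m$ factors in the $(\xi-z)^{-(j+1)}$ expansion, and from the homogeneity re-centering of $\sh$ — and verifying that the resulting triple sum collapses, via a Vandermonde-type binomial identity, to the single sum $\sum_d \binom{t}{n+d} z^{t-n-d}$ times the alternating $\se$--$\sh$ combination. A cleaner alternative that sidesteps most of this bookkeeping is to observe that $\veps_t(z) = z^t - \psi_t(z|z_{1:n})$ already has the closed form $f(z|z_{1:n})\,\sh_{t-n}(z_{1:n}, z)$ from \eqref{eq:complete-extraction} (reading $z_0 = z$), so one may instead differentiate the \emph{product} $f(z|z_{1:n})\,\sh_{t-n}(z_{0:n})|_{z_0 = z}$ directly by Leibniz, use $f^{(k-j)}(z)/(k-j)! = (-1)^{n-k+j}\se_{n-k+j}(z_{1:n}-z\,1_n)$, and use $\frac{1}{j!}\frac{d^j}{dz^j}\sh_{t-n}(z_{1:n}, z)\big|_{\text{as fn of the last variable}}$-type identities for the derivative of a complete homogeneous symmetric polynomial with respect to one of its arguments; then Fact~\ref{fact:hook-bilinear} finishes. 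I would present whichever of these two routes yields the least painful algebra, most likely the second, since it reuses \eqref{eq:complete-extraction} verbatim and isolates the only genuinely new computation — the $z$-derivatives of $\sh_{t-n}$ and of $f$ — into two short lemmatic identities.
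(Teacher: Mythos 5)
Your proposal is correct, and both of your suggested routes yield the claimed identity, but neither is the route the paper actually uses. The paper's proof is slicker: it substitutes $z \mapsto z+\zeta$, observes that translating the evaluation point by $z$ is equivalent to translating the interpolation grid to $z_{1:n}-z\ones_n$, expands $(z_{1:n})^t = ((z_{1:n}-z\ones_n)+z\ones_n)^t$ binomially, uses linearity of the Lagrange operator, notes that the terms with $j<n$ in the binomial sum vanish (degree-$<n$ polynomials are interpolated exactly), and then applies Proposition~\ref{prop:monomial-interpolation-any-k} directly to each remaining term at $\zeta=0$. This avoids redoing the contour-integral calculation or any shifted-variable re-expansion of $\sh$.

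Your first route — keeping $z$ free in the contour integral — does go through: the residue computation gives $\sh_{t-n-j}(z_{1:n},z\ones_{j+1})$ and $f^{(k-j)}(z)/(k-j)! = (-1)^{n-k+j}\se_{n-k+j}(z_{1:n}-z\ones_n)$, and the re-centering identity $\sh_{t-n-j}(z_{1:n},z\ones_{j+1}) = \sum_{\tau}\binom{t}{\tau}z^\tau\sh_{t-n-j-\tau}(z_{1:n}-z\ones_n)$ (which follows from the shift identity for $\sh$ applied to the $N=n+j+1$-variable grid after a rescaling) indeed collapses the double sum via Fact~\ref{fact:hook-bilinear}, after swapping the order of summation, exactly to the claimed $\binom{t}{n+d}z^{t-n-d}\sch_{(d|n-k-1)}(z_{1:n}-z\ones_n)$. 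This version is, amusingly, exactly what the authors wrote out first and then commented out in the source in favor of the shorter reduction — so while it is correct, you would need to carry out the re-centering and index-swap cleanly to close the gap you flag. Your second route (Leibniz on $f\cdot\sh_{t-n}$ together with $\frac{1}{j!}\partial_z^j\sh_d(z,z_{1:n}) = \sh_{d-j}(z\ones_{j+1},z_{1:n})$) leads to the same intermediate sum and hence the same bookkeeping. The homogeneity argument you give for the second displayed line is correct as stated.

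The one substantive caution: both of your routes hinge on getting the $\sh$-re-centering binomial coefficient right. The clean statement is $\sh_{p}(z_{1:n},z\ones_{j+1}) = \sum_{\tau\ge 0}\binom{n+j+p}{\tau}\,z^\tau\,\sh_{p-\tau}(z_{1:n}-z\ones_n)$, and with $p=t-n-j$ the binomial's upper entry conveniently becomes $t$ independently of $j$, which is what makes the swap of sums collapse. If you present the contour-integral route, state and prove this as a lemma; otherwise the ``Vandermonde-type collapse'' you anticipate will not materialize and the argument stalls. The paper's change-of-variables route avoids needing this lemma at all, which is why it is shorter.
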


\begin{proof}
Notice that the second identity is verified by factoring out~$(-z)$ from each Schur polynomial.
For the first, we use the linearity of the functional~$g \mapsto \frac{d^k}{dz^k}[g(z) - \hat g(z|z_{1:n})]$ and operator~$g \mapsto \hat g(\cdot|z_{1:n})$, and invoke the binomial expansion for~$(z+\zeta)^t$;
this allows to rewrite the left-hand side of~\eqref{eq:monomial-interpolation-error-via-schur} as
\[
\begin{aligned}
\frac{1}{k!} \frac{d^k}{dz^k} \big[z^{t} - \psi_{t}(z|z_{1:n})\big] 
&= \frac{1}{k!} \frac{d^k}{d\zeta^k} \big[(z+\zeta)^{t} - \psi_{t}(z+\zeta|z_{1:n})\big] \big|_{\zeta = 0} \\
&= \frac{1}{k!} \frac{d^k}{d\zeta^k} \big[ (z+\zeta)^{t}  - L_n [(z_{1:n})^{t} | z_{1:n}-z\ones_n](\zeta) \big] \big|_{\zeta = 0} \\
&= \frac{1}{k!} \frac{d^k}{d\zeta^k} \big[ (z+\zeta)^{t}  - L_n [((z_{1:n}-z\ones_n) + z \ones_n)^{t} | z_{1:n}-z\ones_n](\zeta) \big] \big|_{\zeta = 0} \\
&= \frac{1}{k!} \sum_{j=0}^{t} \binom{t}{j} z^{t-j} \frac{d^k}{d\zeta^k} \big[  \zeta^j  - \psi_{j}(\zeta|z_{1:n}-z\ones_n)  \big] \big|_{\zeta = 0} \\
&= -\frac{1}{k!} \sum_{j=n}^{t} \binom{t}{j} z^{t-j} \psi_{j}^{(k)}(0|z_{1:n}-z\ones_n).
\end{aligned}
\]
The result follows by changing the summation index to~$d = j - n$ and invoking Proposition~\ref{prop:monomial-interpolation-any-k}. 
\end{proof}
\noindent 
Let us define the rational multivariate function~$\sQ_{t,n,k}: \C^n \to \C$, symmetric in its arguments, by
\begin{equation*}
\sQ_{t,n,k}(\zeta_{1:n}) 
:= \sum_{d = 0}^{t-n} (-1)^d \frac{\binom{t}{n+d}\sch_{(d|n-k-1)}(\zeta_{1:n})}{\binom{t}{n}\se_{n-k}(\zeta_{1:n})}. 
\end{equation*}
\begin{corollary}
\label{cor:schur-ratio}
Under the premise of Proposition~\ref{prop:monomial-interpolation-error-via-schur}, for any~$z \in \C \setminus \{0\}$ and~$t \in \N_n$, one has
\begin{equation}
\label{eq:schur-ratio}
\frac{[z^{t} - \psi_{t}(z|z_{1:n})]^{(k)}}{\binom{t}{n}  f^{(k)}(z|z_{1:n})}
=  z^{t-n} \sQ_{t,n,k}(\ones_n - z^{-1}z_{1:n}).
\end{equation}
\end{corollary}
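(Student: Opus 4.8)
The plan is to derive \eqref{eq:schur-ratio} by dividing the closed form of Proposition~\ref{prop:monomial-interpolation-error-via-schur} at a general degree $t$ by the \emph{same} closed form specialized to $t = n$, where it collapses into an expression for $f^{(k)}$ itself. The point is that the denominator $f^{(k)}(z|z_{1:n})$ is, up to obvious factors, the $t=n$ instance of the numerator: by Fact~\ref{fact:interpolation} the Lagrange polynomial interpolating $z^n$ on $z_{1:n}$ is $\psi_n(z|z_{1:n}) = z^n - f(z|z_{1:n})$, so $z^n - \psi_n(z|z_{1:n}) = f(z|z_{1:n})$.

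First I would specialize the second identity of Proposition~\ref{prop:monomial-interpolation-error-via-schur} to $t = n$: the sum over $d$ then reduces to the single term $d = 0$, and since $(0\,|\,n-k-1) = (1^{n-k})$ one has $\sch_{(0|n-k-1)} = \se_{n-k}$, so that for all $z \neq 0$,
\[
\frac{f^{(k)}(z|z_{1:n})}{k!} \;=\; \frac{1}{k!}\frac{d^k}{dz^k}\bigl[z^n - \psi_n(z|z_{1:n})\bigr] \;=\; z^{n-k}\,\se_{n-k}\bigl(\ones_n - z^{-1}z_{1:n}\bigr).
\]
Then I would take the general-$t$ form of the same proposition,
\[
\frac{1}{k!}\frac{d^k}{dz^k}\bigl[z^t - \psi_t(z|z_{1:n})\bigr] \;=\; z^{t-k}\sum_{d=0}^{t-n}(-1)^d\binom{t}{n+d}\sch_{(d|n-k-1)}\bigl(\ones_n - z^{-1}z_{1:n}\bigr), \qquad z \neq 0,
\]
and divide it by the previous display: the common factor $k!$ cancels, $z^{t-k}/z^{n-k}$ simplifies to $z^{t-n}$, and dividing through by $\binom{t}{n}$ turns the remaining fraction into exactly $\sQ_{t,n,k}(\ones_n - z^{-1}z_{1:n})$, which is the asserted identity.

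I do not expect a genuine obstacle: the argument is a short formal manipulation once Proposition~\ref{prop:monomial-interpolation-error-via-schur} is in hand. The only things to be careful about are that the restriction $z \neq 0$ is needed both for the substitution $z^{-1}z_{1:n}$ and for the second (rational) form of Proposition~\ref{prop:monomial-interpolation-error-via-schur} to apply --- which is precisely the hypothesis of the corollary --- and the elementary identification $\sch_{(0|n-k-1)} = \se_{n-k}$ used to recognize the denominator.
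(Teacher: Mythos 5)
Your derivation is correct and is essentially the only natural way to obtain the corollary from Proposition~\ref{prop:monomial-interpolation-error-via-schur}: the paper states the corollary without an explicit proof precisely because it amounts to the short algebraic manipulation you carry out. Specializing the second identity of the proposition to $t=n$, using $\psi_n = z^n - f$ and $\sch_{(0|n-k-1)} = \se_{n-k}$ to recognize the denominator, and then dividing is exactly the intended argument; an equivalent route would be to observe directly that $f^{(k)}(z)/k! = \se_{n-k}(z\ones_n - z_{1:n}) = z^{n-k}\se_{n-k}(\ones_n - z^{-1}z_{1:n})$ by homogeneity, but this is the same computation in a different guise.
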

Dividing by~$\binom{t}{n}$ corresponds to renormalizing~$z^t$ and~$z^n$ to have the same modulus of~$n^{\textup{th}}$ derivative on~$\T$.
Note also that our choice of the signs in the second identity of~\eqref{eq:monomial-interpolation-error-via-schur}, leading to the {alternating} sum of the ratios of Schur polynomials in~\eqref{eq:schur-ratio}, and prompting that Schur positivity will not be easy to exploit (unless~$z = 0$), is not coincidental.
Indeed, we can put the factor~$(-1)^{d}$ back into each respective Schur function~$\sch_{(d|n-k-1)}$, to get the sum~$c_{d} \sch_{(d|n-k-1)}(1_n-z^{-1}z_{1:n})$ with~$c_{d}$ of the same sign. 
However, we shall be concerned with the case~$\|z_{1:n}\|_{\infty} \le |z|$, where~$z^{-1}z_{1:n}-\ones_n$ have {\em negative} real parts. 
In other words, sign alternation is in the nature of things if one is concerned with~$z = 1$.







\subsection{Context: interpolation on~$\R$.}
Let~$I$ be the segment~$[-1,1]$, and define~$\cF_n(I)$ as the set of~$n$ times weakly differentiable functions~$g: I \to \R$ with~$\|g^{(n)}\|_{I} \le 1$, where~$\|\cdot\|_X$ is the sup-norm on~$X$. 
A classic problem in approximation theory is to characterize the worst-case error of Lagrange interpolation, measuring it uniformly over target~$g \in \cF_n(I)$ and test point~$z \in I$, in~$k^{\textup{th}}$ derivative:
\begin{equation}
\label{eq:interpolation-error-uniform}
\sup_{g \in \cF_n(I)} \|g^{(k)}(\cdot) - \hat g^{(k)}(\cdot|z_{1:n}) \|_{I}, \quad 0 \le k < n.
\end{equation}
The crucial point here is to study this quantity without making strong assumptions on~$z_{1:n}$. 
In the case of~$k = 0$, the folklore result, commonly attributed to Cauchy, is that for arbitrary interpolation grid and~$k = 0$,~the supremum is attained on the monomial~$\frac{1}{n!} z^{n}$, and thus is equal to~$\tfrac{1}{n!} \|f(\cdot|z_{1:n})\|_{I}$.\footnote{Note that the segment~$I$ and the bound on~$\|g^{(n)}\|_{I}$ can be made arbitrary, by rescaling~$g$ and shifting/rescaling~$z$.}
Since this supremum trivially reduces to~$\frac{1}{n!} \|f^{(n)}(\cdot|z_{1:n})\|_I = 1$ for~$k = n$, one might speculate that
\begin{equation}
\label{eq:shadrin-bound-uniform}
\sup_{g \in \cF_n(I)} \|g^{(k)}(\cdot) - \hat g^{(k)}(\cdot|z_{1:n}) \|_{I} = \frac{1}{n!} \|f^{(k)}(\cdot|z_{1:n})\|_{I}, \quad  0 \le k \le n-1.
\end{equation}
This was first conjectured in the 1990s by Kallioniemi~\cite{kallioniemi1990landau}, then proved by Howell~\cite{howell1991derivative} in the special case of~$k = n-1$.
The way more challenging general case was eventually resolved by Shadrin~\cite{shadrin1995error}. 

One should be cautioned against dismissing~\eqref{eq:shadrin-bound-uniform}  as a simple consequence of Bernstein's inequality; in fact, it is a much more delicate result, closely related to the Landau-Kolmogorov problem~\cite{shadrin2004twelve}.
To grasp the nature of Shadrin's result, assume that~$z_{1:n}$ is simple and sorted in the increasing order.
Define~$a_{1:n-k-1}$ and~$b_{1:n-k-1}$, respectively, as the roots of~$[\prod_{j \in [n] \setminus 1} (z-z_j)]^{(k)}$ and~$[\prod_{j \in [n-1]} (z-z_j)]^{(k)}$, sorted in the same way, and complete these lists with~$a_0 = -1$,~$a_{n-k} = z_n$ and~$b_0 = z_1$,~$b_{n-k}=1$. 
Define two sets,~$\cI_{n,k}(z_{1:n}) := \bigcup_{s = 0}^{n-k} I_s$ and~$\cJ_{n,k}(z_{1:n}) := \bigcup_{s = 0}^{n-k-1} J_s$, with~$I_s := [a_s,b_s]$ and~$J_s := (b_{s},a_{s+1})$. 
In~\cite{kallioniemi1976bounds}, Kallioniemi showed that~$a_{s} \le b_s \le a_{s+1} \le b_{s+1}$ for~$0 \le s < n-k$; in other words,~the sequence~$I_0, J_0, I_1, \dots, J_{n-k-1}, I_{n-k}$ partitions~$I$ with all segments listed from left to right. Moreover, he showed that over the segments~$I_s$, the monomial~$\frac{1}{n!} z^{n}$ is {\em pointwise} worst-case:
\begin{equation}
\label{eq:shadrin-bound-pointwise}
\sup_{g \in \cF_n(I)} |g^{(k)}(z) - \hat g^{(k)}(z|z_{1:n})| = \frac{1}{n!} |f^{(k)}(z|z_{1:n})| \quad \forall z \in \cI_{n,k}(z_{1:n}).
\end{equation}
In~\cite{kallioniemi1990landau}, he conjectured that in each~$J_s$,~$\sup_{g \in \cF_n(I)} |g^{(k)}(\cdot) - \hat g^{(k)}(\cdot|z_{1:n})|$ is maximized at an endpoint,
which implies~\eqref{eq:shadrin-bound-uniform}, and that was what Shadrin later proved in~\cite{shadrin1995error}.
Shadrin's proof was based on the characterization of extremal functions in~\eqref{eq:interpolation-error-uniform} as {perfect splines}, which were earlier introduced by Schoenberg~\cite{schoenberg1964best} and Sard~\cite{sard1963linear,sard1967optimal} in the context of other extremal problems on~$\R$, such as the Landau-Kolmogorov problem and the Stechkin problem, and developed in the 1970s (e.g.~\cite{pinkus1978some,goodman1978another}).
Shadrin's subsequent expository article~\cite{shadrin2004twelve} explored the various connections between these problems.
\vspace{0.2cm}

\subsection{Interpolation with self-conjugate nodes.}
In the remainder of this section, we consider the following setup:~$z \in I$ as before, but the nodes are now complex and come in conjugate pairs, so that the arising symmetric polynomials are real-valued (in particular, the characteristic polynomial has real coefficients).
We formulate some conjectures on the worst-case interpolation error -- first in the approximation-theoretic language, then in terms of Schur polynomials, using the connection given by Proposition~\ref{prop:monomial-interpolation-error-via-schur} and the reduction to monomials that shall be justified in Section~\ref{sec:monomial-reduction}.

From now on, and unless stated otherwise, the interpolation nodes are assumed to belong to~$\D$.
Accordingly, we change the target class to~$\cE_n(\D)$, the set of all {\em entire} functions~$g: \C \to \C$ that satisfy~$\|g^{(n)}\|_\D \le 1$.
The motivation for this choice is twofold. 
On the one hand,~$\mathscr{E}_n(\D)$ includes polynomials, and therefore its restriction onto~$I$ is uniformly dense in~$\cF_n(I)$ by the Stone-Weierstrass theorem.
On the other hand, in Section~\ref{sec:monomial-reduction} we show that monomials are the only worst-case functions in~$\mathscr{E}_n(\D)$, with respect to both the pointwise criterion (cf.~\eqref{eq:shadrin-bound-pointwise})
and the uniform criterion, (cf.~\eqref{eq:interpolation-error-uniform}).
As the result, Proposition~\ref{prop:monomial-interpolation-error-via-schur} and Corollary~\ref{cor:schur-ratio} allow to reformulate these conjectures as statements about the behavior of the alternating sum of Schur polynomials (cf.~\eqref{eq:monomial-interpolation-error-via-schur}) or of their ratios (cf.~\eqref{eq:schur-ratio}).


To facilitate the discussion, let us first introduce the relevant notion from~\cite{ellard2020families}.
\begin{definition}
\label{def:self-conjugate}
A list~$z_{1:n} \in \C^n$ is {\em self-conjugate} if
$(i)$ for any~$z \in \{z_1,\dots,z_n\}$ with~$\Im(z) \ne 0$, the conjugate~$\bar z~$ is also contained in~$\{z_1,\dots,z_n\}$;
$(ii)$ $z_{1:n}$ contains even number of copies of each~$z \in \R$.
\end{definition}
%
At first glance, requirement~$(ii)$ might appear redundant. 
However, it proves to be crucial in~\cite{ellard2020families}, whose techniques seem to be relevant in the context of the conjectures to be discussed.
Note that~$z_{1:n}$ is self-conjugate if and only if~$f(\cdot|z_{1:n})$ has real coefficients and even multiplicities of real roots.

The first conjecture concerns the worst-case ratio of residuals, cf.~\eqref{eq:shadrin-bound-pointwise}, evaluated at the points~$\{0,1\}$. 
\begin{conjecture}
\label{conj:pointwise-self-conjugate-z1}
For any~$0 \le k \le n-1$ and arbitrary self-conjugate grid~$z_{1:n} \in \D^n$, it holds that
\begin{equation*}
\sup_{g \in \cE_n(\D)} |g^{(k)}(z) - \hat g^{(k)}(z|z_{1:n})| \bigg|_{z = 1} = \frac{1}{n!} |f^{(k)}(z|z_{1:n})| \bigg|_{z = 1}.
\end{equation*}
Moreover, under the additional assumption that~$\Re(z_{1:n}) \in \R^n_+$, this equation is also valid at~$z = 0$.
\end{conjecture}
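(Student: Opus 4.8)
\emph{Proof strategy.}
The plan is to route the argument through the reduction to monomials to be established in Section~\ref{sec:monomial-reduction}: the extremal functions in~$\cE_n(\D)$ for the pointwise criterion are the renormalized monomials~$g_t(z)=z^{t}/\bigl(n!\binom{t}{n}\bigr)$, $t\in\N_n$ (scaled so that~$\|g_t^{(n)}\|_{\D}=1$), and the residual~$g_n^{(k)}(\cdot)-\hat g_n^{(k)}(\cdot|z_{1:n})$ equals~$\tfrac{1}{n!}f^{(k)}(\cdot|z_{1:n})$ since~$\psi_n(z)=z^n-f(z)$ (Fact~\ref{fact:interpolation}). Hence both claimed identities are equivalent to the assertion that, for every~$t\in\N_n$ and~$z_0\in\{0,1\}$, the renormalized residual~$\tfrac{1}{n!\binom{t}{n}}\bigl|[z^{t}-\psi_{t}(z|z_{1:n})]^{(k)}\bigr|$ evaluated at~$z=z_0$ never exceeds~$\tfrac{1}{n!}\bigl|f^{(k)}(z_0|z_{1:n})\bigr|$; that is, the interpolation residual of~$z^t$ is dominated, after renormalization, by that of~$z^n$. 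Proposition~\ref{prop:monomial-interpolation-error-via-schur} and Corollary~\ref{cor:schur-ratio} turn this into a comparison of (alternating sums of) hook Schur polynomials, which I would then analyze separately at the two points.

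At~$z_0=0$, every summand on the right of the first identity in~\eqref{eq:monomial-interpolation-error-via-schur} carries the factor~$z^{t-n-d}$ and so vanishes except the one with~$d=t-n$: the residual of~$z^t$ collapses to~$(-1)^{n-k}\sch_{(t-n|n-k-1)}(z_{1:n})$, and that of~$z^n$ to~$(-1)^{n-k}\se_{n-k}(z_{1:n})=(-1)^{n-k}\sch_{(0|n-k-1)}(z_{1:n})$. Thus the~$z=0$ claim is equivalent to
\[
\bigl|\sch_{(t-n\,|\,n-k-1)}(z_{1:n})\bigr|\;\le\;\binom{t}{n}\,\bigl|\se_{n-k}(z_{1:n})\bigr|\qquad(t\in\N_n)
\]
for every self-conjugate~$z_{1:n}\in\D^n$ with~$\Re(z_{1:n})\in\R^{n}_{+}$. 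Expanding the left side by Fact~\ref{fact:hook-bilinear} as~$\sum_{j\ge0}(-1)^{j}\sh_{t-n-j}(z_{1:n})\se_{n-k+j}(z_{1:n})$, in which only~$0\le j\le k$ survive (as~$\se_d\equiv0$ in~$n$ variables for~$d>n$), its leading term is precisely~$\se_{n-k}(z_{1:n})\sh_{t-n}(z_{1:n})$. Here~$\sm$-positivity is of no help: replacing~$|\sch_{(t-n|n-k-1)}(z_{1:n})|$ by~$\sch_{(t-n|n-k-1)}(|z_1|,\dots,|z_n|)$ is far too crude, since~$|\se_{n-k}(z_{1:n})|$ may be arbitrarily small relative to~$\se_{n-k}(|z_1|,\dots,|z_n|)$. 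One must exploit the structure instead: self-conjugacy makes all~$\sh_d(z_{1:n}),\se_d(z_{1:n})$ real, and the half-plane constraint~$\Re(z_{1:n})\ge0$ should control their signs, and I would try to push the estimate through by induction on~$t$ using the coefficient recurrence~$\psi_{t+1}=A_n(f)^{\top}\psi_t$ of Lemma~\ref{lem:coherency}.

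For~$z_0=1$, the same reduction — now via the second identity in~\eqref{eq:monomial-interpolation-error-via-schur}, equivalently Corollary~\ref{cor:schur-ratio} at~$z=1$ — yields
\[
\Biggl|\sum_{d=0}^{t-n}(-1)^{d}\binom{t}{n+d}\sch_{(d\,|\,n-k-1)}(\ones_n-z_{1:n})\Biggr|\;\le\;\binom{t}{n}\,\bigl|\se_{n-k}(\ones_n-z_{1:n})\bigr|,
\]
with~$\ones_n-z_{1:n}$ again self-conjugate and lying in the closed disc of radius~$1$ about~$1$, hence in the closed right half-plane. This is where I expect the main obstacle: the sum no longer collapses, and — as noted just after Corollary~\ref{cor:schur-ratio} — the alternation in~$d$ is intrinsic and cannot be absorbed, so Schur positivity does not apply and one must genuinely estimate a cancelling sum. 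The plan is to attack it through a generating-function identity: substitute the bilinear expansion for each~$\sch_{(d|n-k-1)}$, interchange the summations, and recognize~$\sum_d(-1)^d\binom{t}{n+d}\sh_m(\cdot)$ as a coefficient of~$(1+y)^{t}/f(y+1)$, recasting the inequality as a contour estimate on~$\T$; failing that, to adapt the perfect-spline/Peano-kernel machinery behind Shadrin's theorem, representing the residual functional as~$[g-\hat g]^{(k)}(1)=\oint_{\T}K(\xi)\,g^{(n)}(\xi)\,d\xi$ and showing that~$\|K\|_{L^1(\T)}$ is attained at~$g=z^n/n!$, with self-conjugacy of~$z_{1:n}$ forcing the conjugate symmetry of~$K$ on~$\T$ that would make this possible; the techniques of~\cite{ellard2020families} alluded to above should be relevant here.

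In brief, the skeleton is: (i)~invoke the monomial reduction; (ii)~rewrite everything in hook Schur polynomials via Proposition~\ref{prop:monomial-interpolation-error-via-schur}; (iii)~handle~$z=0$ through the collapsed single-term comparison, using self-conjugacy and the half-plane hypothesis to pin the signs; (iv)~handle~$z=1$ by estimating the intrinsically alternating Schur sum. Step~(iv) — and, within both steps, the reliance on self-conjugacy to control the signs — is the crux.
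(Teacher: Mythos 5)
The statement you were asked to prove is left \emph{open} in the paper: it is Conjecture~\ref{conj:pointwise-self-conjugate-z1}, and the paper does not prove it. What the paper does is precisely your steps~(i)--(ii): invoke the monomial reduction of Proposition~\ref{prop:extreme-points}, then use Proposition~\ref{prop:monomial-interpolation-error-via-schur} and Corollary~\ref{cor:schur-ratio} to recast the claim as the Schur-polynomial inequalities of Conjecture~\ref{conj:pointwise-self-conjugate-z1-schur}. Your computations at the two points are correct and match the paper's: at~$z=0$ the residual of~$z^t$ collapses (via Proposition~\ref{prop:monomial-interpolation-any-k}) to~$(-1)^{n-k}\sch_{(t-n|n-k-1)}(z_{1:n})$, giving the inequality~$|\sch_{(t-n|n-k-1)}(z_{1:n})|\le\binom{t}{n}|\se_{n-k}(z_{1:n})|$; at~$z=1$ one gets the alternating sum~$|\sQ_{t,n,k}(z_{1:n}+\ones_n)|\le 1$, and you correctly note both that $\sm$-positivity is of no help because the alternation in~$d$ is intrinsic (the paper makes this same observation right after Corollary~\ref{cor:schur-ratio}), and that~$\ones_n-z_{1:n}$ sits in the disc of radius~$1$ about~$1$, hence in the right half-plane.

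However, steps~(iii)--(iv) are strategy sketches, not a proof. You name three possible attacks (induction on~$t$ via the recurrence $\psi_{t+1}=A_n(f)^\top\psi_t$, a generating-function contour estimate, and a Peano-kernel/perfect-spline argument \`a la Shadrin), but carry out none of them; the required estimates remain genuinely open, both in your proposal and in the paper. The paper's supporting evidence is in fact more concrete than what you offer: for~$z=0$ with a real grid $z_{1:n}\in[0,1]^n$ it deduces the inequality from Khare--Tao coordinatewise monotonicity of~$\sch_\lambda/\sch_\mu$ on~$\R_+^n$ together with the hook-content formula~\eqref{eq:hook-content-formula}, and for~$z=1$ it verifies only the simplest case~$t=n+1$, $k$~even, via a log-concavity inequality from~\cite{ellard2020families}. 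So your reformulation is faithful to the paper's, but the conjecture itself is proved neither by you nor by the authors; a clean write-up should flag this as a statement of a conjecture together with its Schur-polynomial equivalent, rather than as a completed proof.
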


\noindent
By our previous remarks, in order to prove---or disprove---this conjecture, it suffices to consider the normalized monomials~$\frac{(m-n)!}{m!} z^m$,~$m \ge n$. 
As such, Corollary~\ref{cor:schur-ratio} provides its equivalent formulation:

\begin{conjecture}[Equivalent to~Conjecture~\ref{conj:pointwise-self-conjugate-z1}]
\label{conj:pointwise-self-conjugate-z1-schur}
For all~$0 \le k < n \le t$ and self-conjugate~$z_{1:n} \in \D^n$,
\begin{equation}
\label{eq:pointwise-self-conjugate-z1-schur}
|\sQ_{t,n,k}(z_{1:n} + \ones_n)| \le 1.
\end{equation}
Moreover, if the grid additionally satisfies~$\Re(z_{1:n}) \in \R^n_+$, then~$|\sch_{(t-n|n-k-1)}(z_{1:n})| \le \binom{t}{n} |\se_{n-k}(z_{1:n})|$.
\end{conjecture}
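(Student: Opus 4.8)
The plan is to prove that Conjectures~\ref{conj:pointwise-self-conjugate-z1} and~\ref{conj:pointwise-self-conjugate-z1-schur} are equivalent, by tracing in both directions the chain ``worst-case residual over $\cE_n(\D)$ $\rightarrow$ worst case over normalized monomials $\rightarrow$ a ratio of (alternating sums of) Schur polynomials''. Fix $0 \le k < n$ and a self-conjugate grid $z_{1:n} \in \D^n$. First I would invoke the monomial-reduction theorem of Section~\ref{sec:monomial-reduction}: for each $z$, the supremum $\sup_{g \in \cE_n(\D)} |g^{(k)}(z) - \hat g^{(k)}(z|z_{1:n})|$ is attained on a normalized monomial $\frac{(m-n)!}{m!} z^m$ with $m \ge n$ (for $m < n$ the residual vanishes identically), so it equals $\sup_{m \ge n} \frac{(m-n)!}{m!}\,\big|[z^m - \psi_m(z|z_{1:n})]^{(k)}\big|$. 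By Fact~\ref{fact:interpolation} the $m = n$ term is $\frac{1}{n!}\,\big|[z^n - \psi_n(z|z_{1:n})]^{(k)}\big| = \frac{1}{n!}|f^{(k)}(z|z_{1:n})|$, since $\psi_n = z^n - f$; hence the supremum is automatically $\ge \frac{1}{n!}|f^{(k)}(z|z_{1:n})|$, and Conjecture~\ref{conj:pointwise-self-conjugate-z1} at a given $z$ is equivalent to the statement that the $m=n$ term dominates all others, i.e.
\[
\big|[z^m - \psi_m(z|z_{1:n})]^{(k)}\big| \;\le\; \binom{m}{n}\,|f^{(k)}(z|z_{1:n})| \qquad\text{for every } m \ge n.
\]

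Second, I would convert this ratio bound into the Schur language, treating the two evaluation points a little differently. For $z = 1$ (in fact any $z \ne 0$) this is exactly Corollary~\ref{cor:schur-ratio} with $t = m$: the left-hand ratio equals $z^{t-n}\sQ_{t,n,k}(\ones_n - z^{-1}z_{1:n})$, which at $z = 1$ is $\sQ_{t,n,k}(\ones_n - z_{1:n})$, so the bound reads $|\sQ_{t,n,k}(\ones_n - z_{1:n})| \le 1$ for all $t \ge n$. Here I would use the observation that the class of self-conjugate grids in $\D^n$ is invariant under $z_{1:n} \mapsto -z_{1:n}$ (negation commutes with conjugation, preserves even multiplicities of real roots, and maps $\D^n$ to itself), so this family of inequalities coincides with $|\sQ_{t,n,k}(z_{1:n} + \ones_n)| \le 1$ for all such grids --- the first assertion of Conjecture~\ref{conj:pointwise-self-conjugate-z1-schur}. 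For $z = 0$, Corollary~\ref{cor:schur-ratio} degenerates, so instead I would use the first identity of Proposition~\ref{prop:monomial-interpolation-error-via-schur} (equivalently Proposition~\ref{prop:monomial-interpolation-any-k}): at $z = 0$ only the top term $d = t - n$ survives, giving $\frac{1}{k!}[z^t - \psi_t(z|z_{1:n})]^{(k)}\big|_{z=0} = (-1)^{n-k}\sch_{(t-n|n-k-1)}(z_{1:n})$ and, for $t = n$, $\frac{1}{k!}f^{(k)}(0|z_{1:n}) = (-1)^{n-k}\se_{n-k}(z_{1:n})$. The ratio bound then becomes $|\sch_{(t-n|n-k-1)}(z_{1:n})| \le \binom{t}{n}|\se_{n-k}(z_{1:n})|$ for all $t \ge n$, which is the second assertion of Conjecture~\ref{conj:pointwise-self-conjugate-z1-schur}; here the hypothesis $\Re(z_{1:n}) \in \R^n_+$ is carried through unchanged, as no negation is needed. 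Identifying the running index $m$ with $t$ closes the equivalence in both directions.

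Everything above is routine once the monomial reduction of Section~\ref{sec:monomial-reduction} is available; the only points requiring care are the separate handling of $z = 0$ and the negation-symmetry step turning $\ones_n - z_{1:n}$ into $\ones_n + z_{1:n}$. The genuine obstacle lies \emph{past} the equivalence, in verifying \eqref{eq:pointwise-self-conjugate-z1-schur} itself: the sum defining $\sQ_{t,n,k}$ truly alternates in sign, so the $\sh$-/$\se$-positivity machinery of Section~\ref{sec:result} does not apply directly (as already noted after Corollary~\ref{cor:schur-ratio}), and one is pushed toward the approximation-theoretic route --- perfect splines in the spirit of Shadrin~\cite{shadrin1995error}, combined with the self-conjugate factorization theory of~\cite{ellard2020families} --- to control the sign pattern of $z^{t} - \psi_{t}(z|z_{1:n})$ and its derivatives on $[0,1]$. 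I would expect the $z = 0$, $\Re(z_{1:n}) \ge 0$ half to be the more tractable one: there the target is the clean bound $|\sch_{(t-n|n-k-1)}| \le \binom{t}{n}|\se_{n-k}|$, which one may hope to attack via the bilinear hook formula (Fact~\ref{fact:hook-bilinear}) together with the half-plane location of the arguments.
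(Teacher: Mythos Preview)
Your argument is correct and follows essentially the same route as the paper's own (terse) justification of the equivalence: monomial reduction via Proposition~\ref{prop:extreme-points}, then Corollary~\ref{cor:schur-ratio} at $z=1$ and Proposition~\ref{prop:monomial-interpolation-any-k} (equivalently the $z=0$ specialization of Proposition~\ref{prop:monomial-interpolation-error-via-schur}) at $z=0$. You are more explicit than the paper in two places it leaves tacit --- the separate handling of $z=0$ (where Corollary~\ref{cor:schur-ratio} does not apply) and the negation-symmetry step $z_{1:n}\mapsto -z_{1:n}$ converting $\sQ_{t,n,k}(\ones_n - z_{1:n})$ into $\sQ_{t,n,k}(\ones_n + z_{1:n})$ --- and your closing paragraph is commentary on the open conjecture itself rather than on the equivalence, in line with the paper's own discussion.
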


Note that both inequalities of Conjecture~\ref{conj:pointwise-self-conjugate-z1-schur} are trivially satisfied---with equality---when~$t = n$. 
Our belief in them is justified by the following considerations.

In the case of~$z = 0$, we are motivated by~Khare and Tao~\cite{khare2021sign}, who showed that for any partitions~$\lambda \contains \mu$, the ratio
$
{\sch_{\lambda}(x_{1:n})}/{\sch_{\mu}(x_{1:n})}
$
is coordinatewise nondecreasing on~$\R^n_+$. 
Applying this for~$x_{1:n} \in I^n$ and our pair of partitions, we get
\[
\frac{\sch_{(t-n|n-k-1)}(x_{1:n})}{\binom{t}{n}\se_{n-k}(x_{1:n})} 
\le \frac{\sch_{(t-n|n-k-1)}(\ones_n)}{\binom{t}{n}\se_{n-k}(\ones_n)} 
\overset{\eqref{eq:hook-content-formula}}{=} 
\frac{\binom{t}{k} \binom{t-k-1}{t-n}}{\binom{t}{n}\binom{n}{k}}
= \frac{n-k}{t-k} \le 1,
\]
and recover the second claim of Conjecture~\ref{conj:pointwise-self-conjugate-z1} in the real case (it is also recovered from~\eqref{eq:shadrin-bound-pointwise}, replacing~$I$ with~$[0,1]$).
While the authors of~\cite{khare2021sign} note that their results do not seem to extend from~$\R_+$ to~$\D$, their arguments do not seem to exclude the right half-disc. One should also keep in mind that for the second claim of Conjecture~\ref{conj:pointwise-self-conjugate-z1}, we only need very special partitions. In particular, it would suffice to show that ratios of the form
${\sch_{(a|b-1)}(z_{1:n})}/{\se_{b}(z_{1:n})}$ are coordinatewise nondecreasing in~$\Re(z_{1:n})$ and nonincreasing in~$\Im(z_{1:n})$, for self-conjugate~$z_{1:n} \in \D^n$ with nonnegative real parts. 
The intution here is that the odd powers of~$\Im(z_1),...,\Im(z_n)$ cancel, and the even ones are negative.

In the case of~$z = 1$, we make the following observations. First, the shifted nodes~$\zeta_{1:n}$ live in the disc~$\{ \zeta \in \C: |\zeta - 1| \le 1 \}$ in the right half-plane, so there is no need to require~$\Re(z_{1:n}) \in \R^n_+$.
Second, one can verify~\eqref{eq:pointwise-self-conjugate-z1-schur} in the easiest case~$t = n+1$ with even~$k$. Here, the inequality becomes
\[
0 \le \se_1(\zeta_{1:n}) - \frac{\se_{n-k+1}(\zeta_{1:n})}{\se_{n-k}(\zeta_{1:n})} \le 2n+2.
\]
Clearly,~$\se_d(\zeta_{1:n}) \ge 0$ for any~$d \in \N_1$, and the upper bound follows since~$\se_{1}(\zeta_{1:n}) \le 2n$.
For the lower bound, since~$n-k$ is even,~\cite[Theorem~2.2(1)]{ellard2020families} gives~$\se_{1} \se_{n-k} - \se_{n-k+1} = \se_{1} \se_{n-k} - \se_0 \se_{n-k+1} \ge 0$.
(Notice that we cannot simply invoke~Schur positivity here, since the variables are not real-valued.)
Finally, we note that~\cite{bergeron2004some,lam2006schur} prove the positivity of some differences of products of Schur functions.

More broadly, of interest is the structure of the ``Kallioniemi set," defined by analogy with~\eqref{eq:shadrin-bound-pointwise} as
\begin{equation}
\label{eq:kallioniemi-set}
\mathcal{\cI}_{n,k}^{\D}(z_{1:n}) := 
\bigg\{ z \in I: \sup_{g \in \cE_n(\D)} |g^{(k)}(z) - \hat g^{(k)}(z|z_{1:n})| = \frac{1}{n!} |f^{(k)}(z|z_{1:n})| \bigg\},
\end{equation}
for self-conjugate grids~$z_{1:n} \in \D^n$. It seems likely that the structure identified by Kallioniemi and Shadrin in the real setup is preserved -- in other words, this set consists of
nonoverlapping segments, and~$z \mapsto |[g(z) - \hat g(z|z_{1:n})]^{\vspace{-0.1cm}(k)}|$ is maximized at an endpoint of each complementary segment.
This would imply that Shadrin's result generalizes to self-conjugate grids in~$\D^n$, prompting the following
\begin{conjecture}
\label{conj:uniform-self-conjugate}
For any~$0 \le k \le n-1$ and arbitrary self-conjugate grid~$z_{1:n} \in \D^n$, it holds that
\begin{equation}
\label{eq:uniform-self-conjugate}
\sup_{g \in \cE_n(\D)} \| g^{(k)}(\cdot) - \hat g^{(k)}(\cdot|z_{1:n}) \|_{I}
= \frac{1}{n!} \|f^{(k)}(\cdot|z_{1:n})\|_{I} \, ,
\end{equation}
or equivalently in terms of Schur polynomials:
\[
\sup_{t \in \N_n} \left\{ {\binom{t}{n}}^{-1} \max_{z \in I} \left| \sum_{d = 0}^{t-n} {\binom{t}{n+d}} z^{t-n-d}  \sch_{(d|n-k-1)} (z_{1:n}-z\ones_n) \right| \right\} = \max_{z \in I} |\se_{n-k}(z_{1:n}-z\ones_n)| \, .
\]
\end{conjecture}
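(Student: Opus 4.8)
The plan is to reproduce, in the self-conjugate complex setting, the two-stage argument of Kallioniemi and Shadrin for real nodes. First I would invoke the monomial reduction of Section~\ref{sec:monomial-reduction}, by which the supremum over $g \in \cE_n(\D)$ is attained on the normalized monomials $\tfrac{(t-n)!}{t!}z^{t}$, $t \ge n$; combined with Proposition~\ref{prop:monomial-interpolation-error-via-schur}, this turns~\eqref{eq:uniform-self-conjugate} into the purely algebraic statement $\sup_{t \in \N_n}\|\Phi_t\|_I = \|\Psi\|_I$, where
\[
\Phi_t(z) := {\binom{t}{n}}^{-1}\sum_{d=0}^{t-n}\binom{t}{n+d}\,z^{t-n-d}\,\sch_{(d|n-k-1)}(z_{1:n}-z\ones_n), \qquad \Psi(z) := \se_{n-k}(z_{1:n}-z\ones_n),
\]
and $\Phi_n = \Psi$. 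Since $t=n$ already gives equality, it remains to show $\|\Phi_t\|_I \le \|\Psi\|_I$ for every $t>n$. By Corollary~\ref{cor:schur-ratio} one has $\Phi_t(z)=z^{t-n}\sQ_{t,n,k}(\ones_n-z^{-1}z_{1:n})\,\Psi(z)$ for $z\ne 0$, so the bound amounts to saying that the amplification factor $|z^{t-n}\sQ_{t,n,k}(\ones_n-z^{-1}z_{1:n})|$, though it may exceed $1$ on parts of $I$, never does so where $|\Psi|$ is close to its maximum.

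I would make this precise exactly along Kallioniemi--Shadrin lines. Let $\cI_{n,k}^{\D}(z_{1:n})$ be the Kallioniemi set~\eqref{eq:kallioniemi-set}. The first step is \emph{pointwise domination}: $|\Phi_t(z)| \le |\Psi(z)|$ for every $z \in \cI_{n,k}^{\D}(z_{1:n})$ and every $t>n$. This is the extension of Conjecture~\ref{conj:pointwise-self-conjugate-z1} from $\{0,1\}$ to the entire set, and I would attack it through coordinatewise monotonicity of the ratios $\sch_{(d|n-k-1)}/\se_{n-k}$ in the spirit of Khare--Tao~\cite{khare2021sign}, but on the shifted nodes $z_{1:n}-z\ones_n$ and in complexified form --- monotone nondecreasing in each $\Re(z_j)$ and nonincreasing in each $|\Im(z_j)|$ --- which is exactly the behavior the self-conjugacy of $z_{1:n}$ (Definition~\ref{def:self-conjugate}) renders meaningful, since the odd powers of the imaginary parts cancel in $\Phi_t$ and $\Psi$. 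The second step is the \emph{endpoint principle}: on every connected component $J$ of $I \setminus \cI_{n,k}^{\D}(z_{1:n})$, the quantity $\max_{\overline J}|\Phi_t|$ is attained at an endpoint of $J$. In the real case this is Shadrin's theorem~\cite{shadrin1995error}; here I would try either to complexify the perfect-spline machinery (the self-conjugate structure keeping all polynomials real-coefficiented) or to argue directly from the disposition of the zeros of $\Phi_t$ relative to those of $\Psi$, which would first require establishing the analogue of Kallioniemi's interlacing $a_s \le b_s \le a_{s+1}$~\cite{kallioniemi1976bounds} for self-conjugate grids in $\D^n$.

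Granting both steps, the conclusion is routine: $\|\Phi_t\|_I$ is attained either at a point of $\cI_{n,k}^{\D}(z_{1:n})$, where $|\Phi_t|\le|\Psi|\le\|\Psi\|_I$, or at an endpoint of some complementary interval; but in the Kallioniemi--Shadrin structure these endpoints themselves lie in $\cI_{n,k}^{\D}(z_{1:n})$, so the same estimate applies. Together with $\Phi_n=\Psi$ this gives $\sup_{t \in \N_n}\|\Phi_t\|_I = \|\Psi\|_I$, and unwinding the monomial reduction yields~\eqref{eq:uniform-self-conjugate}.

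The main obstacle is the endpoint principle on the complementary intervals. This is precisely the step for which the real-variable theory needed the full apparatus of perfect splines and the Landau--Kolmogorov circle of ideas~\cite{shadrin1995error,shadrin2004twelve}, and in the Schur language it is delicate for the concrete reason flagged after Corollary~\ref{cor:schur-ratio}: on the nodes $\ones_n-z^{-1}z_{1:n}$, which have negative real parts when $\|z_{1:n}\|_{\infty} \le |z|$, the sum defining $\sQ_{t,n,k}$ is genuinely \emph{alternating}, so Schur positivity is of no direct use and one must quantify the cancellations. A logically prior difficulty is to even show that $\cI_{n,k}^{\D}(z_{1:n})$ has the clean interval structure of the real case. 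Plausible inroads are the theory of self-conjugate tuples of~\cite{ellard2020families} and the known positivity results for differences of products of Schur functions~\cite{bergeron2004some,lam2006schur}; the essential open point is to push such results from $\R$ (or $\R_+$) to the disc $\D$, which is exactly why the statement is offered here as a conjecture rather than a theorem.
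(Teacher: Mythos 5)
The statement you address is a \emph{conjecture} in the paper (Conjecture~\ref{conj:uniform-self-conjugate}); the authors offer no proof, only the informal discussion preceding and following it. So there is no paper proof to compare against, and what you have written is a research plan rather than a verification. That said, your plan tracks the paper's own suggested route very closely: you reduce to normalized monomials via Proposition~\ref{prop:extreme-points}, pass to the Schur formulation via Proposition~\ref{prop:monomial-interpolation-error-via-schur} and Corollary~\ref{cor:schur-ratio}, and then propose to reproduce the Kallioniemi--Shadrin two-step structure (pointwise domination on the Kallioniemi set~\eqref{eq:kallioniemi-set}, followed by an endpoint principle on the complementary intervals). The technical bookkeeping you supply is correct --- in particular, $\Phi_n = \Psi$ holds because $\sch_{(0\,|\,n-k-1)} = \sch_{(1^{n-k})} = \se_{n-k}$, and the factorization $\Phi_t = z^{t-n}\,\sQ_{t,n,k}(\ones_n - z^{-1}z_{1:n})\,\Psi$ follows from $f^{(k)}(z)/k! = (-1)^{n-k}\se_{n-k}(z_{1:n}-z\ones_n)$ combined with Corollary~\ref{cor:schur-ratio}.

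You are also right, and appropriately candid, about where the plan is not a proof. Both of your steps are themselves open. Step one (pointwise domination on $\cI_{n,k}^{\D}(z_{1:n})$) is strictly stronger than Conjecture~\ref{conj:pointwise-self-conjugate-z1}, which covers only $z \in \{0,1\}$, and extending the Khare--Tao coordinatewise monotonicity from $\R_+$ to self-conjugate tuples in $\D$ is exactly the kind of generalization the paper flags as not following from their arguments. Step two (the endpoint principle) is the genuine hard part, mirroring the difficulty of Shadrin's real theorem; here the alternating-sign phenomenon noted after Corollary~\ref{cor:schur-ratio} blocks any direct appeal to Schur positivity, and a prior obstacle --- showing that $\cI_{n,k}^{\D}$ even has the clean interval structure of the real Kallioniemi set --- is itself unresolved. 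One small addition relative to your sketch: the paper also points to Weigandt's formula for $\sum_j \partial_{x_j}\sch_\lambda$ as a possible handle on the $z$-dependence of the alternating Schur sum, which you might find useful for whichever of your two steps you attack first. Your proposal is a faithful and well-organized account of the intended strategy and its obstructions, but it does not, and does not claim to, settle the conjecture.
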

\noindent
Here, one must study the sum in the left-hand side analytically as a polynomial in~$z \in I$, 
possibly using the result of~Weigandt~\cite{weigandt2023derivatives,grinberg2024diagonal} that expresses~$\sum_{j \in [n]} \frac{\partial}{\partial x_j} \sch_{\lambda}(x_{1:n})$ in terms of Schur polynomials. 

\subsubsection*{Palindromic case.}
We now briefly discuss the case~$z_{1:n} \in \T^n$. Since~$\bar z = 1/z$ on~$\T$, polynomial~$f(z)$ coincides with its reciprocal~$z^{n}f(1/z)$, i.e.~is {\em palindromic}: $f_{k} = f_{n-k}$ for~$0 \le k \le n$. Equivalently, the dynamical system defined in~\eqref{def:recurrence} is time-reversible. 
Of possible relevance here is the result in~\cite{alexandersson2021symmetric}, recasting~$\sch_{\lambda}(x_1,\dots,x_n,1/x_1,\dots,1/x_n)$ as a Schur polynomial in the variables~$x_j + 1/x_j$. Invoking we see that~$\Im(z_{1:n})$ vanish here, along with the difficulties they used to cause. On the flip side, the new parametrization might complicate the nice algebraic structure associated with hook partitions.

\subsection{Reduction to monomials.}
\label{sec:monomial-reduction}

We now reduce the suprema over~$\cE_{n}(\D)$ in~\eqref{eq:kallioniemi-set}--\eqref{eq:uniform-self-conjugate} to those over monomials. 
Interchanging~$\sup_{g \in \cE_n(\D)}$ and~$\sup_{z \in I}$ in~\eqref{eq:uniform-self-conjugate}, it suffices to treat the pointwise case.

\begin{proposition}
\label{prop:extreme-points}
For~$z \in \C$ and simple~$z_{1:n} \in \C^n,$ the functional~$g \in \cE_n(\D) \mapsto |[g(z) - \hat g(z|z_{1:n})]{}^{(k)}|$ is maximized on a monomial of the form~$\frac{(m-n)!}{m!} z^m$,~$m \ge n$.
\end{proposition}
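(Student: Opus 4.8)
The plan is to exploit the linearity of the functional $g \mapsto [g(z) - \hat g(z|z_{1:n})]^{(k)}$ together with the fact that, once we expand a target $g \in \cE_n(\D)$ in its Taylor series, the interpolation-error operator annihilates everything of degree $< n$. Concretely, write $g(\zeta) = \sum_{m \ge 0} c_m \zeta^m$; since $\hat g(\cdot|z_{1:n})$ is the (degree $\le n-1$) Lagrange polynomial of $g$, for $m < n$ the monomial $\zeta^m$ equals its own interpolant and contributes nothing. Hence
\[
[g(z) - \hat g(z|z_{1:n})]^{(k)} = \sum_{m \ge n} c_m \, [z^m - \psi_m(z|z_{1:n})]^{(k)},
\]
a convergent sum (the entire target and its derivatives are locally uniformly approximated by Taylor partial sums). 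The constraint is $\|g^{(n)}\|_\D \le 1$, i.e. $\sup_{|\zeta| \le 1} |\sum_{m \ge n} c_m \frac{m!}{(m-n)!} \zeta^{m-n}| \le 1$.

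The key step is then to pass to normalized monomials. Set $h_m(\zeta) := \frac{(m-n)!}{m!}\zeta^m$ for $m \ge n$, so that $h_m^{(n)}(\zeta) = \zeta^{m-n}$ and $\|h_m^{(n)}\|_\D = 1$; thus each $h_m \in \cE_n(\D)$. Writing $c_m = \frac{(m-n)!}{m!} b_m$, the constraint becomes $\|\sum_{m\ge n} b_m \zeta^{m-n}\|_{\D} \le 1$, which in particular forces $|b_m| \le 1$ and, more usefully, implies (by evaluating the power series $B(\zeta) := \sum_{m \ge n} b_m \zeta^{m-n}$ on $\T$ and using $\|B\|_{\D} = \|B\|_\T$) that the coefficient functional is a bounded linear functional on the disc algebra with the $b_m$ playing the role of Taylor coefficients. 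By linearity,
\[
|[g(z) - \hat g(z|z_{1:n})]^{(k)}|
= \Big| \sum_{m \ge n} b_m \, [h_m(z) - \hat h_m(z|z_{1:n})]^{(k)} \Big|
\le \sum_{m \ge n} |b_m| \cdot \big| [h_m(z) - \hat h_m(z|z_{1:n})]^{(k)} \big|,
\]
and I want to conclude that the right-hand side is maximized, over the admissible set of coefficient sequences $(b_m)$, at a single $b_m = $ unit scalar and the rest zero. This is where one must argue that the feasible region for $(b_m)$ — the unit ball of $H^\infty(\D)$ Taylor coefficients — is such that the supremum of a linear functional $\sum_m b_m v_m$ (with $v_m := [h_m(z)-\hat h_m(z|z_{1:n})]^{(k)} \in \C$) is attained at an extreme point, and that the extreme points relevant here reduce to single monomials. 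The cleanest route: use that $\{(b_m)_{m \ge n} : \|\sum b_m \zeta^{m-n}\|_\D \le 1\}$ contains the sequences $e^{i\alpha}\delta_{m_0}$ for every $m_0$ and every $\alpha$, and that for a \emph{fixed} test point $z$ the quantity $\sum_m |b_m||v_m|$ under the constraint is at most $\sup_{m} |v_m|$ — because the constraint $\|B\|_\D \le 1$ in fact forces $\sum_m |b_m| \cdot (\text{something})$; more precisely, one shows $\big|\sum_m b_m v_m\big| \le \sup_m |v_m|$ is false in general, so instead one must use the residue/integral representation.

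Actually the slick proof avoids the coefficient gymnastics: by Lemma~\ref{lem:integral-representation} applied to $g$ holomorphic on $\D$ (extending to a slightly larger disc by scaling, or handling $\T$ via limits),
\[
g(z) - \hat g(z|z_{1:n}) = \frac{f(z|z_{1:n})}{2\pi i} \oint_{\T} \frac{g(\xi)}{f(\xi|z_{1:n})(\xi - z)}\, d\xi,
\]
and differentiating $k$ times in $z$ gives a representation of $[g(z)-\hat g(z|z_{1:n})]^{(k)}$ as $\frac{1}{2\pi i}\oint_\T g(\xi) K_{z,k}(\xi)\, d\xi$ for an explicit kernel $K_{z,k}$ independent of $g$. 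Now substitute $g(\xi) = \frac{1}{(n-1)!}\int_{|\omega|\le 1} \cdots$ — rather, use that $g \in \cE_n(\D)$ with $\|g^{(n)}\|_\D \le 1$ means $g(\xi) = p(\xi) + \frac{1}{(n-1)!}\int_0^\xi (\xi - s)^{n-1} g^{(n)}(s)\, ds$ with $p$ of degree $< n$ (hence killed by the error functional); plugging this in and interchanging integrals expresses the functional value as an average of $g^{(n)}(s)$ against a kernel, so its modulus is bounded by $\|g^{(n)}\|_\D$ times the total variation of that kernel, with equality achieved by choosing $g^{(n)}(s)$ to match the phase — and the extremal choice, being forced to be an entire function of modulus $1$ everywhere appropriate, turns out to be a pure power $g^{(n)}(s) = s^{m-n}$ for the dominant $m$. \emph{The main obstacle} I anticipate is precisely the last sentence: showing that among all admissible $g^{(n)} \in \{\|\cdot\|_\D \le 1\}$ the phase-matching extremizer can be taken to be a single monomial rather than a general Blaschke-type or outer function — one needs the specific structure of the kernel $K_{z,k}$ (it is, up to the factor $f(z|z_{1:n})$, a finite linear combination of $(\xi-z)^{-j-1}$) to force the optimal $g^{(n)}$ to be a monomial $\xi^{m-n}$, which should follow from Proposition~\ref{prop:monomial-interpolation-any-k} (or its complex-variable refinement Proposition~\ref{prop:monomial-interpolation-error-via-schur}) since those identities exhibit $[z^m - \psi_m(z|z_{1:n})]^{(k)}$ explicitly and one checks that the supremum over the power-series ball of $|\sum_m b_m v_m|$ with $v_m$ as above is attained at an exposed point $b_{m} = $ unimodular scalar for a single $m$. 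I would close by noting that this last reduction is exactly the classical argument underlying Cauchy's $k=0$ bound, here carried out for general $k$ and complex $z$.
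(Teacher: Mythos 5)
You correctly identify where the difficulty lies, but you never close it. Both of your routes reduce to maximizing a linear functional over the set $\{h \text{ entire}: \|h\|_\D \le 1\}$ (with $h = g^{(n)}$ after normalization), and then assert that the maximizer ``turns out to be'' a monomial $\zeta^{m-n}$. That assertion is the entire content of the proposition, and it is not a formality. Phase-matching against your kernel $K_{z,k}$ pointwise on $\T$ would saturate the $L^1$ bound, but the resulting unimodular boundary function is in general not the boundary value of any entire function --- it would typically be a genuine inner function, not a monomial --- and the constraint ``$h$ entire with $\|h\|_\D \le 1$'' is strictly stronger than $\|h\|_{\T} \le 1$ in $H^\infty$. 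So the ``total variation of the kernel'' bound is generically unattained, and one cannot read off the structure of the true extremizer from it. Your first route fails for the same reason: the feasible region for $(b_m)$ is the coefficient ball of bounded entire functions, and you correctly note that the triangle-inequality bound you write is false in general; you then abandon that line without a replacement.

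The paper closes the gap with an abstract argument that never touches the kernel. One shows $\cE_n(\D)$ is compact (Montel's theorem gives precompactness of $\cH_n(\D)$, and $\cE_n(\D)$ is a closed subset), the functional is convex and continuous, and so by Krein--Milman together with Bauer's maximum principle the supremum is attained at an extreme point of $\cE_n(\D)$. After $n$-fold differentiation the problem becomes Lemma~\ref{lem:extreme-points}: the extreme points of $\{g \text{ entire}: \|g\|_\D \le 1\}$ are precisely unimodular multiples of monomials. That lemma is proved by a dichotomy on $\Supp(g) = \{z \in \T : |g(z)|=1\}$: if $\Supp(g) = \T$, the reflection $g_0(z) = 1/\overline{g(1/\bar z)}$ together with analytic continuation forces $g$ to be a monomial; if $\Supp(g)$ is finite, an explicit polynomial perturbation $h$ with $\|g \pm \veps h\|_\D \le 1$ contradicts extremality. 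This lemma is exactly the ingredient missing from your argument, and neither of your routes supplies a substitute for it.
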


\begin{proof}
For any~$z \in \C$ and simple~$z_{1:n} \in \C^n$, the above functional is convex and continuous on~$\cE_n(\D)$ (with respect to the uniform metric on~$\D$).
Now, let~$\cH_n(\D)$ be the set of functions holomorphic on~$\D$ with~$\|g^{(n)}\|_\D \le 1$. 
By Montel's theorem,~$\cH_n(\D)$ is precompact; hence~$\cE_n(\D)$ is compact as its closed subset.
By the Krein-Milman theorem and Bauer's maximum principle, it suffices to show that all extreme points of~$\cE_n(\D)$ are of the form~$\frac{(m-n)!}{m!} a z^{m} + p(z)$, where~$a \in \T$ and~$p(z)$ is a polynomial of degree less than~$n$.
Considering~$[\cdot]^{(k)}$ as a linear operator, we reduce arbitrary~$n \in \N_0$ to~$n = 0$, in which case the claim follows from Lemma~\ref{lem:extreme-points} below.
\end{proof}

\begin{lemma}
\label{lem:extreme-points}
{Extreme points of the set of entire functions with modulus~$\le 1$ on~$\T$ are monomials.} 
\end{lemma}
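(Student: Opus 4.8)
The goal is the sharp statement that the extreme points of the convex set $\mathcal{B}:=\{g\colon\C\to\C\ \text{entire}:\|g\|_{\T}\le 1\}$ are exactly the monomials $cz^{m}$ with $m\in\N_0$ and $|c|=1$ (recall $\|g\|_{\T}=\sup_{|z|\le 1}|g(z)|$ for such $g$, by the maximum principle). The easy inclusion is that each such $cz^{m}$ is extreme: for every $z\in\T$ the value $cz^{m}$ lies on the unit circle, which is the set of extreme points of the closed disc $\D$; hence if $cz^{m}=\tfrac12(g_1+g_2)$ with $g_1,g_2\in\mathcal{B}$, then at each $z\in\T$ the points $g_1(z),g_2(z)\in\D$ have an extreme midpoint, forcing $g_1(z)=g_2(z)=cz^{m}$, and the identity theorem (since $\T$ has a limit point) upgrades this to $g_1\equiv g_2\equiv cz^{m}$. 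For the converse I would take an extreme $g\in\mathcal{B}$ and first note $\|g\|_{\T}=1$: otherwise $g=\tfrac12\big((1+\eps)g\big)+\tfrac12\big((1-\eps)g\big)$ with both summands in $\mathcal{B}$ for small $\eps>0$, a contradiction unless $g\equiv 0$, and $0=\tfrac12(1)+\tfrac12(-1)$ is not extreme either.

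Next I would dichotomize the contact set $E:=\{z\in\T:|g(z)|=1\}$ by analyticity. Writing $g(w)=\sum_k a_k w^k$, the function $g^{\star}(w):=\sum_k\overline{a_k}w^{k}$ is entire and satisfies $g^{\star}(e^{-i\theta})=\overline{g(e^{i\theta})}$ for real $\theta$, so $\phi(\theta):=1-|g(e^{i\theta})|^{2}=1-g(e^{i\theta})\,g^{\star}(e^{-i\theta})$ is real-analytic, $2\pi$-periodic and nonnegative; hence $E$ is either all of $\T$ or a finite nonempty set. If $E=\T$, then $|g|\equiv1$ on $\T$ and $|g|\le1$ on the closed disc; dividing out the finitely many zeros $\zeta_1,\dots,\zeta_r$ of $g$ in the open disc by the Blaschke product $\prod_l\frac{z-\zeta_l}{1-\bar\zeta_l z}$ and applying the maximum principle to the resulting quotient and to its reciprocal yields $|g/\!\prod_l(\cdot)|\equiv1$ on the closed disc, so $g=c\prod_l\frac{z-\zeta_l}{1-\bar\zeta_l z}$ with $|c|=1$; since $g$ is entire it has no poles, forcing every $\zeta_l=0$, so $g=cz^{r}$ is a unimodular monomial.

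It remains to rule out a finite contact set $E=\{w_1,\dots,w_p\}$ with $p\ge1$, and this is where I expect the real work to be. The plan is to perturb $g$ by $h(z):=\prod_{j=1}^{p}(z-w_j)^{N}$ for a suitably large integer $N$ and show $g\pm\eps h\in\mathcal{B}$ for all small $\eps>0$, so that $g=\tfrac12(g+\eps h)+\tfrac12(g-\eps h)$ contradicts extremality (as $h\not\equiv 0$). On the complement of small arcs around the $w_j$ one has $|g|\le 1-\delta$ for some $\delta>0$, so $|g\pm\eps h|\le 1$ there once $\eps$ is small; the delicate point is near each $w_j=e^{i\theta_j}$, where $\phi$ vanishes to a finite even order $2k_j$ while $|h(e^{i\theta})|$ vanishes like $|\theta-\theta_j|^{N}$. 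Taking $N\ge 2\max_j k_j$ then makes $2\eps|g||h|+\eps^{2}|h|^{2}\le 1-|g|^{2}$ on those arcs for $\eps$ small enough, since the right-hand side dominates. The main obstacle is precisely this quantitative matching of the vanishing order of $h$ against the flatness of $|g|$ at the contact points; the rest is standard complex analysis. Combining the two directions gives the claimed characterization, and in particular every extreme point is a monomial of the required form.
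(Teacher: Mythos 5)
Your proof is correct and follows essentially the same strategy as the paper: reduce to $\|g\|=1$, split on whether the contact set $\{z\in\T:|g(z)|=1\}$ is all of $\T$ (reflection/Blaschke argument gives $g=cz^m$) or finite (perturb by a polynomial $h$ vanishing at the contact points to an order matching the flatness of $1-|g|$ there, contradicting extremality). The paper cites Valiron for the finite-or-full dichotomy and uses the reflection $g_0(z)=1/\overline{g(1/\bar z)}$ rather than a Blaschke quotient, and matches the exact vanishing orders $m_j$ of $1-|g|^2$ instead of a uniform exponent $N$, but these are cosmetic differences within the same argument.
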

\begin{proof}
To be an extreme point of~$\cE_0(\D)$,~$g$ has to satisfy~$\|g\|_{\D} = 1$, i.e.~have a nonempty support set
\[
\Supp(g) := \{z \in \T: |g(z)| = 1 \}.
\] 
A classical result~(e.g.~\cite[II.I.3]{valiron1949lectures}) is that for any entire function~$g$,~$\Supp(g)$ is either the whole~$\T$ or a finite set.
In the former case,~$g$ must be a monomial. Indeed,~$g_0(z) := 1/\overline{g(1/\overline{z})}$ is meromorphic on~$\C \setminus \{0\}$ and coincides with~$g$ on~$\T$, so by analytic continuation~$g(z) = g_0(z)$ for all~$z \ne 0$.
Moreover,~$g_0$ has the pole of order~$m$ at~$0$, where~$m \ge 0$ is the order of~$g$ at~$0$. 
Hence~$g$ is a polynomial of degree~$m$ without lower-degree terms, i.e. a monomial of the form~$cz^m$, as required.

In the finite case, assume~$\Supp(g) = \{\zeta_1, \dots, \zeta_d\}$. 
Note that~$s(z) := 1-|g(z)|^2$ is real analytic, nonnegative on~$\D$, and has zeroes~$\zeta_1, \dots, \zeta_d$ on~$\T$; let~$m_1, \dots, m_d$ be its vanishing orders at these zeroes. 
Note that~$r(z) := 1-|g(z)|$ has the same zeroes and vanishing orders, and~$r(z) \sim \frac{1}{2} s(z)$ as~$z \to \zeta_j$. 
Now, define a polynomial~$h(z) := \prod_{j \in [d]} (z-\zeta_j)^{m_j}$ that vanishes to order~$m_j$ at~$\zeta_j$.
Clearly, there are positive constants~$a_j, b_j$ such that, in a neighborhood~$U_\delta(\zeta_j)$ of each~$\zeta_j$, one has~$r(z) \ge a_j |z-\zeta_j|^{m_j}$ and~$|h(z)| \sim b_j|z-\zeta_j|$; therefore,~$r(z)/|h(z)| \ge c > 0$ on~$U_{\delta} := \bigcup_{j \in [d]} U_{\delta}(\zeta_j)$. 
Meanwhile, on the compact set~$\D \setminus U_{\delta}$, one has~$r(z) \ge \alpha > 0$ and~$|h(z)| \ge \beta > 0$ by continuity. 
Thus, for some~$\veps > 0$ one has~$r(z) \ge \veps |h(z)|$ and
$
|g(z) \pm \veps h(z)| \le |g(z)| + \veps |h(z)| \le |g(z)| + r(z) = 1
$
on~$\D$, which contradicts the premise that~$g(\cdot)$ is an extreme point of~$\cE_0(\D)$.
\end{proof}


\begin{remark}
\label{rem:dyakonov}
{\em Note that in the proof of Proposition~\ref{prop:extreme-points}, we cannot replace~$\cE_n(\D)$ with its subset~$\cE_{n,m}(\D)$ containing only polynomials of degree at most~$m = n+d$, for any finite~$d \in \N_0$.
To see why, note that this would lead us to replacing~$\cE_{0}(\D)$ with~$\cE_{0,d}(\D)$ in Lemma~\ref{lem:extreme-points}. However, the perturbation polynomial~$h(z)$ is of degree~$\sum_{j \in [d]} m_j$, and~$s(z)$ is a nonnegative trigonometric polynomial of degree~$2d$ on~$\T$.
Therefore,~$\deg(h)$ is potentially as large as~$2d$, and the perturbed polynomials~$g \pm h$ might be of degree larger than~$d$. 
And indeed, the extreme set of~$\cE_{0,d}(\D)$, characterized in~\cite{dyakonov2003extreme}, contains not only monomials.
Thus,  in view of the conjectured optimality of the {\em lowest-degree} monomial~$\frac{1}{n!}z^n$, even if one is interested in the problem of maximizing~$g \mapsto |[g(z) - \hat g_n(z|z_{1:n})]^{(k)}|$ over~$\cE_{n,m}(\D)$ for a fixed~$m$, it is natural to ``lift'' it to~$\cE_n(\D)$, the closed convex hull of~$\bigcup_{m \in \N_n}\cE_{n,m}(\D)$.
}
\end{remark}

\bibliography{references}

\begin{thebibliography}{10}

\bibitem{izmailov1987peak}
R.~N. Izmailov.
\newblock ``{P}eak'' effect in stationary linear-systems in scalar inputs and
  outputs.
\newblock {\em Automation and Remote Control}, 48(8):1018--1024, 1987.

\bibitem{sussmann2002peaking}
H.~J. Sussmann and P.~V. Kokotovic.
\newblock The peaking phenomenon and the global stabilization of nonlinear
  systems.
\newblock {\em IEEE Transactions on Automatic Control}, 36(4):424--440, 2002.

\bibitem{polyak2002superstable}
B.~T. Polyak and P.~S. Shcherbakov.
\newblock Superstable linear control systems. {I.} {A}nalysis.
\newblock {\em Automation and Remote Control}, 63:1239--1254, 2002.

\bibitem{polyak2016large}
B.~T. Polyak and G.~Smirnov.
\newblock Large deviations for non-zero initial conditions in linear systems.
\newblock {\em Automatica}, 74:297--307, 2016.

\bibitem{polyak2018peak}
B.~T. Polyak, P.~S. Shcherbakov, and G.~V. Smirnov.
\newblock Peak effects in stable linear difference equations.
\newblock {\em Journal of Difference Equations and Applications},
  24(9):1488--1502, 2018.

\bibitem{smirnov2024asymptotic}
G.~Smirnov.
\newblock Asymptotic upper bound for the peak-effect in linear control systems.
\newblock {\em Computational Mathematics and Mathematical Physics},
  64(4):614--620, 2024.

\bibitem{vladimiroff1966methods}
V.~S. Vladimiroff.
\newblock {\em Methods of the theory of functions of several complex
  variables}.
\newblock MIT Press, 1966.

\bibitem{hormander1973introduction}
L.~H\"ormander.
\newblock {\em An introduction to complex analysis in several variables}.
\newblock Elsevier, 1973.

\bibitem{shabat1992introduction}
B.~V. Shabat.
\newblock {\em Introduction to complex analysis: functions of several
  variables}.
\newblock AMS Press, 1992.

\bibitem{andrews1999special}
G.~E. Andrews, R.~Askey, and R.~Roy.
\newblock {\em Special functions}.
\newblock Cambridge University Press, 1999.

\bibitem{gautschi1962inverses}
W.~Gautschi.
\newblock On the inverses of {V}andermonde and confluent {V}andermonde
  matrices.
\newblock {\em Numerische Mathematik}, 4:117--123, 1962.

\bibitem{ostrovskii2024near}
D.~M. Ostrovskii.
\newblock Near-optimal and tractable estimation under shift-invariance.
\newblock {\em arXiv preprint arXiv:2411.03383}, 2024.

\bibitem{shadrin1995error}
A.~Shadrin.
\newblock Error bounds for {L}agrange interpolation.
\newblock {\em Journal of Approximation Theory}, 80(1):25--49, 1995.

\bibitem{stanley2023volume2}
R.~P. Stanley.
\newblock {\em Enumerative combinatorics, II. Second edition.}
\newblock Cambridge University Press, 2023.

\bibitem{macdonald1998symmetric}
I.~G. Macdonald.
\newblock {\em Symmetric functions and {H}all polynomials}.
\newblock Oxford University Press, 1998.

\bibitem{tamvakis2012theory}
H.~Tamvakis.
\newblock The theory of {S}chur polynomials revisited.
\newblock {\em L’Enseignement Mathematique}, 58(1):147--163, 2012.

\bibitem{prasad2018introduction}
A.~Prasad.
\newblock An introduction to {S}chur polynomials.
\newblock {\em Graduate J. Math}, 4:62--84, 2019.

\bibitem{cauchy1815memoire}
A.-L. Cauchy.
\newblock M{\'e}moire sur les fonctions qui ne peuvent obtenir que deux valeurs
  {\'e}gales et de signes contraires par suite des transpositions
  op{\'e}r{\'e}es entre les variables qu’elles renferment.
\newblock {\em Journal de l’Ecole polytechnique}, 10(17):29--112, 1815.

\bibitem{burer2003nonlinear}
S.~Burer and R.~D.~C. Monteiro.
\newblock A nonlinear programming algorithm for solving semidefinite programs
  via low-rank factorization.
\newblock {\em Mathematical programming}, 95(2):329--357, 2003.

\bibitem{burer2005local}
S.~Burer and R.~D.~C. Monteiro.
\newblock Local minima and convergence in low-rank semidefinite programming.
\newblock {\em Mathematical programming}, 103(3):427--444, 2005.

\bibitem{freud1971book}
G.~Freud.
\newblock {\em Orthogonal Polynomials}.
\newblock Pergamon Press, 1971.

\bibitem{eugeciouglu1988combinatorial}
{\"O}.~N. E{\u{g}}ecio{\u{g}}lu and J.~B. Remmel.
\newblock A combinatorial proof of the {G}iambelli identity for {S}chur
  functions.
\newblock {\em Advances in Mathematics}, 70(1):59--86, 1988.

\bibitem{peterson1980geometric}
E.~L. Peterson.
\newblock {\em Geometric programming}.
\newblock Springer, 1980.

\bibitem{boyd2007tutorial}
S.~Boyd, S.-J. Kim, L.~Vandenberghe, and A.~Hassibi.
\newblock A tutorial on geometric programming.
\newblock {\em Optimization \& Engineering}, 8:67--127, 2007.

\bibitem{kallioniemi1990landau}
H.~Kallioniemi.
\newblock The {L}andau problem on compact intervals and optimal numerical
  differentiation.
\newblock {\em Journal of approximation theory}, 63(1):72--91, 1990.

\bibitem{howell1991derivative}
G.~W. Howell.
\newblock Derivative error bounds for {L}agrange interpolation: {A}n extension
  of {C}auchy's bound for the error of {L}agrange interpolation.
\newblock {\em Journal of approximation theory}, 67(2):164--173, 1991.

\bibitem{shadrin2004twelve}
A.~Shadrin.
\newblock Twelve proofs of the {M}arkov inequality.
\newblock {\em Approximation theory: a volume dedicated to Borislav Bojanov},
  pages 233--298, 2004.

\bibitem{kallioniemi1976bounds}
H.~Kallioniemi.
\newblock On bounds for the derivatives of a complex-valued function on a
  compact interval.
\newblock {\em Mathematica Scandinavica}, 39(2):295--314, 1976.

\bibitem{schoenberg1964best}
I.~J. Schoenberg.
\newblock On best approximations of linear operators.
\newblock {\em Indag. Math.}, 26:155--163, 1964.

\bibitem{sard1963linear}
A.~Sard.
\newblock {\em Linear approximation}.
\newblock AMS Press, 1963.

\bibitem{sard1967optimal}
A.~Sard.
\newblock Optimal approximation.
\newblock {\em Journal of Functional Analysis}, 1(2):222--244, 1967.

\bibitem{pinkus1978some}
A.~Pinkus.
\newblock Some extremal properties of perfect splines and the pointwise
  {L}andau problem on the finite interval.
\newblock {\em Journal of Approximation Theory}, 23(1):37--64, 1978.

\bibitem{goodman1978another}
T.~N.~T. Goodman and S.~L. Lee.
\newblock Another extremal property of perfect splines.
\newblock {\em Proceedings of the American Mathematical Society},
  70(2):129--135, 1978.

\bibitem{ellard2020families}
R.~Ellard and H.~{\v{S}}migoc.
\newblock Families of {N}ewton-like inequalities for sets of self-conjugate
  complex numbers.
\newblock {\em Linear Algebra and its Applications}, 597:46--68, 2020.

\bibitem{khare2021sign}
A.~Khare and T.~Tao.
\newblock On the sign patterns of entrywise positivity preservers in fixed
  dimension.
\newblock {\em American Journal of Mathematics}, 143(6):1863--1929, 2021.

\bibitem{bergeron2004some}
F.~Bergeron and P.~McNamara.
\newblock Some positive differences of products of {S}chur functions.
\newblock {\em arXiv Mathematics e-prints}, pages math--0412289, 2004.

\bibitem{lam2006schur}
T.~Lam, A.~Postnikov, and P.~Pylyavskyy.
\newblock Schur positivity and cell transfer.
\newblock In {\em 18th Annual International Conference on Formal Power Series
  and Algebraic Combinatorics, FPSAC 2006}, pages 168--179, 2006.

\bibitem{weigandt2023derivatives}
A.~Weigandt.
\newblock Derivatives and {S}chubert calculus.
\newblock {\em Talk at FPSAC conference}, 2023.

\bibitem{grinberg2024diagonal}
D.~Grinberg, N.~Korniichuk, K.~Molokanov, and S.~Khomych.
\newblock The diagonal derivative of a skew {S}chur polynomial.
\newblock {\em arXiv preprint arXiv:2402.14217}, 2024.

\bibitem{alexandersson2021symmetric}
P.~Alexandersson, L.~A. Gonz{\'a}lez-Serrano, E.~A. Maximenko, and M.~A.
  Moctezuma-Salazar.
\newblock Symmetric polynomials in the symplectic alphabet and the change of
  variables {$ z_j= x_j + x_j{}^{-1} $}.
\newblock {\em The Electronic Journal of Combinatorics}, pages P1--56, 2021.

\bibitem{valiron1949lectures}
G.~Valiron.
\newblock {\em Lectures on the general theory of integral functions
  (trans.~from French).}
\newblock Chelsea, New York, 1949.

\bibitem{dyakonov2003extreme}
K.~M. Dyakonov.
\newblock Extreme points in spaces of polynomials.
\newblock {\em Mathematical Research Letters}, 10(5):717--728, 2003.

\end{thebibliography}
\bibliographystyle{unsrt}

\end{document}